\renewcommand*{\backref}[1]{}
\renewcommand*{\backrefalt}[4]{({%
    \ifcase #1 Not cited.%
          \or page~#2%
          \else pages #2%
    \fi%
    })}
 \newcommand\imCMsym[4][\mathord]{%
  \DeclareFontFamily{U} {#2}{}
  \DeclareFontShape{U}{#2}{m}{n}{
    <-6> #25
    <6-7> #26
    <7-8> #27
    <8-9> #28
    <9-10> #29
    <10-12> #210
    <12-> #212}{}
  \DeclareSymbolFont{CM#2} {U} {#2}{m}{n}
  \DeclareMathSymbol{#4}{#1}{CM#2}{#3}
}
\newcommand\alsoimCMsym[4][\mathord]{\DeclareMathSymbol{#4}{#1}{CM#2}{#3}}
\theoremstyle{plain}
\newtheorem{theorem}{Theorem}[section]
\newtheorem{proposition}[theorem]{Proposition}
\newtheorem{corollary}[theorem]{Corollary}
\newtheorem{lemma}[theorem]{Lemma}
\theoremstyle{definition}
\newtheorem{definition}[theorem]{Definition}
\theoremstyle{remark}
\newtheorem{remark}[theorem]{Remark}
\newtheorem{example}[theorem]{Example}
\newtheorem*{warningu}{Warning}
\newcommand{\N}{{\mathbb N}}
\newcommand{\Z}{{\mathbb Z}}
\newcommand{\Q}{{\mathbb Q}}
\newcommand{\R}{{\mathbb R}}
\newcommand{\C}{{\mathbb C}}
\newcommand{\F}{{\mathbb F}}
\newcommand{\G}{{\mathbb G}}
\renewcommand{\P}{{\mathbb P}}
\newcommand{\lser}[1]{(\!(#1)\!)}
\newcommand{\pow}[1]{\llbracket #1 \rrbracket}
\newcommand{\spec}[1]{\mathrm{Spec}\left(#1\right)}
\newcommand{\cur}[1]{\mathcal{#1}}
\newcommand{\norm}[1]{\left\vert#1\right\vert}
\newcommand{\isomto}{\overset{\sim}{\rightarrow}}
\newcommand{\rig}{\mathrm{rig}}
\newcommand{\ekd}{\cur{E}_K^\dagger}
\newcommand{\et}{\mathrm{\acute{e}t}}
\newcommand{\pn}{(\varphi,\nabla)}
\newcommand{\rk}{\cur{R}_K}
\title[Combinatorial degenerations]{Combinatorial degenerations of surfaces and Calabi--Yau threefolds}
\begin{document}

\author{Bruno Chiarellotto}
       \address{Universit\`a degli Studi di Padova \\ Dipartimento di Matematica ``Tullio Levi-Civita'' \\
        Via Trieste, 63 \\ 
        35121 Padova \\ 
        Italia}
       \email{chiarbru@math.unipd.it}

\author{Christopher Lazda}
       \address{Universit\`a degli Studi di Padova \\ Dipartimento di Matematica ``Tullio Levi-Civita'' \\
        Via Trieste, 63 \\ 
        35121 Padova \\ 
        Italia}
       \email{lazda@math.unipd.it}

\fancyhead[RO]{B. Chiarellotto, C. Lazda}
\fancyhead[LE]{Combinatorial degenerations}
\fancyfoot[C]{\thepage}
       
\maketitle 

\begin{abstract} In this article we study combinatorial degenerations of minimal surfaces of Kodaira dimension 0 over local fields, and in particular show that the `type' of the degeneration can be read off from the monodromy operator acting on a suitable cohomology group. This can be viewed as an arithmetic analogue of results of Persson and Kulikov on degenerations of complex surfaces, and extends various particular cases studied by Matsumoto, Liedtke and Matsumoto and Hern\'andez Mada. We also study `maximally unipotent' degenerations of Calabi--Yau threefolds, following Koll\'ar and Xu, showing in this case that the dual intersection graph is a 3-sphere.
\end{abstract}

\tableofcontents

\section{Introduction}

Fix a complete discrete valuation ring $R$ with perfect residue field $k$ of characteristic $p>3$ and fraction field $F$. Let $\pi$ be a uniformiser for $R$, and let $X$ be a smooth and projective scheme over $F$. Let $\overline{F}$ be a separable closure of $F$.

\begin{definition} \label{defmod} A \emph{model} of $X$ over $R$ is a regular algebraic space $\cur{X}$, proper and flat over $\cur{X}$ over $R$, whose generic fibre is isomorphic to $X$, and whose special fibre is a scheme. We say that a model is semistable if it is \'{e}tale locally smooth over $R[x_1,\ldots,x_d](x_1\ldots x_r-\pi)$, and strictly semistable if furthermore the irreducible components of the special fibre $Y$ are smooth over $k$.
\end{definition}

A major question in arithmetic geometry is that of determining criteria under which $X$ has good or semistable reduction over $F$, i.e. admits a model $\cur{X}$ which is smooth and proper over $R$, or semistable over $R$. In general the question of determining good reduction criteria comes in two flavours.
\begin{enumerate} \item Does there exists a model $\cur{X}$ of $X$ which is smooth over $R$?
\item Given a semistable model $\cur{X}$ of $X$, can we tell whether or not $\cur{X}$ is smooth?
\end{enumerate}

We will refer to the first of these as the problem of `abstract' good reduction, and the second as the problem of `concrete' good reduction. The sorts of criteria we expect are those that can be expressed in certain homological or homotopical invariants of the variety in question. In this article we will mainly concentrate on these problems for minimal smooth projective surfaces over $F$ of Kodaira dimension $0$. These naturally fall into four classes:
\begin{itemize} \item K3 surfaces;
\item Enriques surfaces;
\item abelian surfaces;
\item bielliptic surfaces,
\end{itemize}
and in each case we have both the abstract and concrete good reduction problem. Note that for this article we will generally use `abelian surface' to mean a surface over $F$ that is geometrically an abelian surface, i.e. we do not necessarily assume the existence of an $F$-rational point (or thus of a group law).

In the analogous complex analytic situation (i.e. that of a semistable, projective degeneration $X \rightarrow \Delta$ over the open unit disc with general fibre $X_t$ a minimal complex algebraic surface with $\kappa=0$) it was shown by Persson \cite{Per77} and Kulikov \cite{Kul77} that, under a certain (reasonably strong) hypothesis on the total space $X$ one could quite explicitly describe the `shape' of the special fibre, and that these shapes naturally fall into three `types' depending on the nilpotency index of the logarithm of the monodromy on a suitable cohomology group. Our main result here is an analogue of this result in an `arithmetic' context, namely classifying the special fibre of a strictly semistable scheme over $R$ whose generic fibre is a surface of one of the above types, in terms of the monodromy operator on a suitable cohomology group. The exact form of the theorem is somewhat tricky to state simply, so here we content ourselves with providing a rough outline and refer to the body of the article for more detailed statements. 

\begin{theorem}[Theorems \ref{k3main}, \ref{cgre}, \ref{abmain} and \ref{bimain}] \label{vague}Let $X/F$ be a minimal surface with $\kappa=0$, and let $\ell$ be a prime (possibly equal to $p$). Let $\mathscr{X}/R$ be a `minimal' model of $X$ in the sense of Definition \ref{minmod}. Then the special fibre $Y$ of $\mathscr{X}$ is `combinatorial', and moreover there exists an `$\ell$-adic local system' $V_\ell$ on $X$ such that $Y$ is of Type I, II or III as the nilpotency index of a certain monodromy operator on $H^i(X,V_\ell)$ is $1$, $2$ or $3$ respectively.
\end{theorem}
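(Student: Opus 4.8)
\emph{Overall strategy.} The plan is to prove the four constituent results along a common template. Write $Y=\bigcup_i Y_i$ for the special fibre of the minimal model $\mathscr{X}$, with double curves $Y_{ij}=Y_i\cap Y_j$, triple points $Y_{ijk}$, and dual intersection complex $\Gamma$. In the K3 and abelian cases one takes $V_\ell=\Q_\ell$; in the Enriques case one takes $V_\ell$ to be the rank-one $\ell$-adic local system attached to the (\'etale, since $p>3$) canonical double cover $\wt{X}\to X$ by a K3 surface, so that $H^2(X_{\overline F},V_\ell)$ is the anti-invariant part of $H^2(\wt{X}_{\overline F},\Q_\ell)$; and in the bielliptic case one likewise uses the local systems coming from the finite \'etale cover of $X$ by an abelian surface. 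The plan is then to argue: (i) the monodromy operator $N$ on $H^i(X_{\overline F},V_\ell)$, together with its weight filtration, is computed by a Clemens--Schmid / weight spectral sequence whose $E_1$-terms are the ($V_\ell$-twisted) cohomologies of the strata $Y_I$ and whose extreme graded pieces are the (co)homology of $\Gamma$; (ii) by the weight--monodromy property — known for surfaces, via Rapoport--Zink when $\ell\neq p$ and the $p$-adic weight spectral sequence of Mokrane--Nakkajima when $\ell=p$ — one has $N^k\colon\mathrm{gr}^W_{i+k}\isomto\mathrm{gr}^W_{i-k}$, so that the nilpotency index of $N$ equals $1+\max\{k:\mathrm{gr}^W_{i+k}\neq0\}$; and (iii) ``minimality'' of $\mathscr{X}$ forces strong constraints on the geometry of $Y$ and hence on $\Gamma$.

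\emph{Geometry of the special fibre.} I would first exploit minimality: since $\kappa(X)=0$, a power of $K_{X}$ is trivial, and the defining property of a minimal model (Definition~\ref{minmod}) arranges that $\omega_{\mathscr{X}/R}$ is trivial (or torsion) near $Y$, hence $\omega_Y\cong\mathcal{O}_Y$. Adjunction along $Y_i$ gives $\omega_{Y_i}\cong\mathcal{O}_{Y_i}(-\sum_j Y_{ij})$, and the triple-point formula $\big(Y_{ij}\big)^2_{Y_i}+\big(Y_{ij}\big)^2_{Y_j}=-\#\{\text{triple points on }Y_{ij}\}$ then forces the Persson--Kulikov trichotomy for the shape of $Y$: either $Y$ is irreducible with $\omega_Y\cong\mathcal{O}_Y$ (so $\Gamma$ is a point); or the components form a chain (or cycle) whose interior terms are ruled over elliptic curves and whose ends are rational or Enriques, the double curves being elliptic or cycles of rational curves ($\Gamma$ homotopy equivalent to an interval or a circle); or all components are rational, each carrying a cycle of rational curves as anticanonical divisor, glued along the triple points into a triangulation of a closed surface. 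Checking that the characteristic-$p>3$ classification of surfaces with $\kappa=0$ (Bombieri--Mumford) introduces no exotic components, and that $\Gamma$ cannot be a degenerate ``fake'' configuration, is the most delicate geometric point; here the hypothesis $p>3$ is essential, as it rules out quasi-elliptic fibrations and non-classical Enriques surfaces.

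\emph{Matching monodromy with geometry.} Feeding this classification into step (ii): $\mathrm{gr}^W_0 H^i$ (resp.\ $\mathrm{gr}^W_{2i}$) is identified with an $H^0(\Gamma)$-type and an $H^2(\Gamma)$-type piece, so the nilpotency index of $N$ is $1$ exactly when $\Gamma$ is contractible, $2$ exactly when $\Gamma$ is homotopy equivalent to $S^1$, and $3$ exactly when $H^2(\Gamma)\neq0$, which by the geometric step forces $\Gamma$ to be a triangulation of $S^2$. In the first case one moreover wants $\mathscr{X}$ itself to be smooth: this ``abstract good reduction'' implication ($N=0\Rightarrow$ good reduction) is not formal, and is where one invokes the results of Matsumoto and of Liedtke--Matsumoto (K3, Enriques) and the corresponding statements for abelian and bielliptic surfaces, together with minimality. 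The remaining two cases are then precisely Types II and III, which matches the three homotopy types of $\Gamma$ with the three shapes of $Y$ and completes the proof.

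\emph{Main obstacle.} The principal difficulty is assembling the cohomological dictionary in the stated generality: the weight--monodromy conjecture for $H^2$ when $\ell=p$ (i.e.\ for the log-crystalline/Hyodo--Kato realisation), including the requisite degeneration of the $p$-adic weight spectral sequence and the explicit identification of its extreme graded pieces with $H^*(\Gamma)$, is considerably more involved than its $\ell\neq p$ counterpart. A close second is the abstract good reduction implication in the case $N=0$, which genuinely uses global geometry (Torelli-type and moduli-theoretic input for K3 and abelian surfaces) rather than the local weight-monodromy formalism, and which must be transported across the canonical covers in the Enriques and bielliptic cases without disturbing the comparison of monodromy operators.
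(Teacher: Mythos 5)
Your overall architecture matches the paper's: show that minimality plus adjunction (via the classification of pairs $(V,D)$ with $K_V+D=0$, Lemma \ref{paac}) forces the Persson--Kulikov trichotomy for $Y$, then read off the nilpotency index from the weight spectral sequence and weight--monodromy. But your cohomological dictionary in the matching step is wrong, and this is a genuine gap. You claim the index is $1$ exactly when $\Gamma$ is contractible and $2$ exactly when $\Gamma\simeq S^1$. For Type II degenerations of K3 and Enriques surfaces (and for chain-type bielliptic ones) the dual graph $\Gamma$ is a segment, hence contractible, yet $N\neq 0$: the nonvanishing of $N$ is detected not by $H^1(\Gamma)$ but by the odd-weight term $E_2^{1,1}$, i.e.\ by the excess of $H^1$ of the elliptic double curves over $H^1$ of the components. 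This is exactly the dimension count the paper performs (using Lemma \ref{betti}): in the K3 case $\dim E_2^{1,1}=2(N-1)-2(N-2)=2\neq 0$, and in the abelian case one first pins down $\dim E_2^{0,1}=2$ from $b_1(X)=4$ and the isomorphisms $N^r:E_2^{-r,w+r}\isomto E_2^{r,w-r}$. Only the index-$3$ criterion is correctly a statement about $\Gamma$, namely $E_2^{2,0}\cong H^2_{\mathrm{sing}}(\Gamma,\Q_\ell)\neq 0$. As written, your criterion would assign $N=0$ to every Type II K3 degeneration.

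A second, structural point: the step where you invoke ``$N=0\Rightarrow$ good reduction'' via Torelli-type and moduli-theoretic input (Matsumoto, Liedtke--Matsumoto) is not needed and conflates the concrete problem with the abstract one. The theorem presupposes that a minimal model $\mathscr{X}$ is given and only asserts which of the three combinatorial shapes its special fibre has; once one proves the three one-way implications (Type I $\Rightarrow$ $E_2^{1,1}=E_2^{2,0}=0$, Type II $\Rightarrow$ $E_2^{1,1}\neq 0$, $E_2^{2,0}=0$, Type III $\Rightarrow$ $E_2^{2,0}\neq 0$), the biconditional follows formally because both trichotomies are mutually exclusive and exhaustive. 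The existence of such models (where Torelli/moduli input and semistable reduction hypotheses genuinely enter) is treated separately in the paper and is not part of this statement. Finally, two smaller corrections: for Enriques and bielliptic surfaces minimality only gives $\omega_Y$ torsion, not trivial (the paper's proofs pass to the canonical $m$-cover and use torsion-freeness of $\mathrm{Pic}$ of rational surfaces to untwist), and the paper's Enriques invariant is $\pi_2(X)$, equivalently $H^2$ of the full rank-$2$ pushforward local system, i.e.\ all of $H^2(\widetilde{X})$, rather than only the anti-invariant part; your rank-one variant would require a separate check that the anti-invariant part still sees the full nilpotency index.
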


\begin{remark} \begin{enumerate} \item We will not give the definition of `combinatorial' surfaces here, see Definitions \ref{crdk}, \ref{crde}, \ref{crda} and \ref{crdb}.
\item When $\mathrm{char}(F)=0$ or $\mathrm{char}(F)=p\neq \ell$ then the local system $V_\ell$ is a $\Q_\ell$-\'etale sheaf on $X$, and the corresponding cohomology group is $H^i_{\et}(X_{\overline{F}},V_{\ell})$. This is an $\ell$-adic representation of $G_F$, de Rham when $\ell=p$ and $\mathrm{char}(F)=0$, and hence has a monodromy operator attached to it. 
\item When $\mathrm{char}(F)=p=\ell$ then the local system $V_\ell=V_p$ is an overconvergent $F$-isocrystal, and the corresponding cohomology group is a certain form of rigid cohomology $H^i_\rig(X/\rk,V_p)$. This is a $\pn$-module over the Robba ring $\rk$ and hence has a monodromy operator by the $p$-adic local monodromy theorem. For more details on $p$-adic cohomology in equicharacteristic $p$ case see \S\ref{rpad}. \end{enumerate}
\end{remark}

Certain types of results of this sort have been studied before, for example by Matsumoto in \cite{Mat15} (for $\mathrm{char}(F)\neq \ell$ and $X$ a K3 surface), Liedtke and Matsumoto in \cite{LM14} ($\mathrm{char}(F)=0$, $\ell\neq p$ and $X$ K3 or Enriques), Hern\'andez-Mada in \cite{HM15} ($\mathrm{char}(F)=0$, $\ell=p$ and $X$ K3 or Enriques), and P\'erez Buend\'ia in \cite{PB14} ($\mathrm{char}(F)=0$, $\ell=p$ and $X$ K3), and our purpose here is partly to unify these existing results into a broader picture, and partly to fill in various gaps, for example allowing $\ell=p=\mathrm{char}(F)$ in the case of K3 surfaces. It is perhaps worth noting that even treating the case of abelian surfaces is not quite as irrelevant as it may seem (given the rather well-known results on good reduction criteria for abelian varieties) since our result describes the possible shape of the special fibre of a \emph{proper}, but not necessarily smooth model. We also relate these shapes to the more classical description of the special fibre of the N\'eron model, at least after a finite base change (Proposition \ref{typerank}).

In each case (K3, Enriques, abelian, bielliptic) the proof of the theorem is in two parts. The first consists of showing that the special fibre $Y$ is combinatorial, this uses coherent cohomology and some basic (logarithmic) algebraic geometry. The second then divides the possible shapes into types depending on the nilpotency index of a certain monodromy operator $N$, this uses the weight spectral sequence and the weight monodromy conjecture (which in all cases is known for dimensions $\leq2$). Although we do not use it explicitly, constantly lurking in the background here is a Clemens--Schmid type exact sequence of the sort considered in \cite{CT14}. Unfortunately, while the structure of the argument in all 4 cases is similar, we were not able to provide a single argument to cover all of them, hence parts of this article may seem somewhat repetitive.

The major hypothesis in the theorem is `minimality' of the model $\mathscr{X}$, which is more or less the assumption that the canonical divisor $K_{\mathscr{X}}$ of $\mathscr{X}$ is numerically trivial. For K3 surfaces one expects that such models exist (at least after a finite base change), and Matsumoto in \cite{Kul77} showed that this is true if the semistable reduction conjecture is true for K3 surfaces. For abelian surfaces, this argument adapts to show that one does always have such a model after a finite base change (Theorem \ref{concab}), however, for Enriques surfaces there are counterexamples to the existence of such models (see \cite{LM14}) and it seems likely that the same true for bielliptic surfaces. Unfortunately, the methods used by Persson, Kulikov et al. to describe the special fibre when one does not necessarily have these `minimal models' do not seem to be at all adaptable to the arithmetic situation.

Finally, we turn towards addressing similar questions in higher dimensions by looking at certain `maximally unipotent' degenerations of Calabi--Yau threefolds. The inspiration here is the recent work of Koll\'ar and Xu in \cite{KX15} on log Calabi--Yau pairs, using recently proved results on the Minimal Model Program for threefolds in positive characteristic (in particular the existence of Mori fibre spaces from \cite{BW14}). The main result we obtain (Theorem \ref{CY3}) is only part of the story, unfortunately, proceeding any further (at least using the methods of this article) will require knowing that the weight monodromy conjecture holds in the given situation, so is only likely to be currently possible in equicharacteristic. A key part of the proof uses a certain description of the homotopy type (in particular the fundamental group) of Berkovich spaces, which forces us to restrict to models $\mathscr{X}/R$ which are schemes, rather than algebraic spaces. As the example of K3 surfaces shows, however, any result concerning the `abstract' good reduction problem is likely to involve algebraic spaces, and will therefore require methods to handle this case.

\subsection*{Notation and conventions}

Throughout $k$ will be a perfect field of characteristic $p>3$, $R$ will be a complete DVR with residue field $k$ and fraction field $F$, which may be of characteristic $0$ or $p$. We will choose a uniformiser $\pi$ for $F$, and let $\overline{F}$ denote a separable closure. We will denote by $q$ some fixed power of $p$ such that $\F_q\subset k$.

A variety over a field will be a separated scheme of finite type, and when $X$ is proper and $\mathscr{F}$ is a coherent sheaf on $X$ we will write
\[ h^i(X,\mathscr{F})=\dim H^i(X,\mathscr{F})\;\;\;\;\text{and}\;\;\;\;\chi(X,\mathscr{F})=\sum_i (-1)^i h^i(X,\mathscr{F}).\]
We will also write $\chi(X)=\chi(X,\mathcal{O}_X)$, since we always mean coherent Euler-Poincar\'e characteristics (rather than topological ones) this should not cause confusion.

Unless otherwise mentioned, a surface over any field will always mean a smooth, projective and geometrically connected surface. A ruled surface of genus $g$ is a surface $X$ together with a morphism $f:X\rightarrow C$ to a smooth projective surface $C$ of genus $g$, whose generic fibre is isomorphic to $\P^1$. If we let $F$ denote a smooth fibre of $f$ then an $n$-\emph{ruling} of $f$ (for some $n\geq 1$) will be a smooth curve $D \subset X$ such $D\cdot F=n$, a $1$-ruling will be referred to simply as a ruling. 

\section{Review of $p$-adic cohomology in equicharacteristic}\label{rpad}

In this section we will briefly review some of the material from \cite{LP16} on $p$-adic cohomology when $\mathrm{char}(F)=p$, and explain some of the facts alluded to in the introduction, in particular the existence of monodromy operators. We will therefore let $W=W(k)$ denote the ring of Witt vectors of $k$, $K$ its fraction field, and $\sigma$ the $q$-power Frobenius on $W$ and $K$. In this situation, we have an isomorphism $F\cong k\lser{\pi}$ where $\pi$ is our choice of uniformiser. We will let $\rk$ denote the Robba ring over $K$, that is the ring of series $\sum_i a_it^i$ with $a_i\in K$ such that:
\begin{itemize} \item for all $\rho<1$, $\norm{a_i}\rho^i\rightarrow 0$ as $i\rightarrow \infty$;
\item for some $\eta<1$, $\norm{a_i}\eta^i\rightarrow 0$ as $i\rightarrow -\infty$.
\end{itemize}
In other words, it is the ring of functions convergent on some semi-open annulus $\eta\leq \norm{t}<1$. The ring of integral elements $\rk^{\mathrm{int}}$ (i.e. those with $a_i\in W$) is therefore a lift of $F$ to characteristic $0$, in the sense that mapping $t\mapsto \pi$ induces $\rk^{\mathrm{int}}/(p)\cong F$. We will denote by $\sigma$ a Frobenius on $\rk$, i.e. a continuous $\sigma$-linear endomorphism preserving $\rk^{\mathrm{int}}$ and lifting the absolute $q$-power Frobenius on $F$, we will moreover assume that $\sigma(t)=ut^q$ for some $u\in (W\pow{t}\otimes_W K)^\times$. The reader is welcome to assume that $\sigma(\sum_i a_it^i)=\sum_i \sigma(a_i)t^{iq}$. Let $\partial_t:\rk\rightarrow \rk$ denote the derivation given by differentiation with respect to $t$.

\begin{definition} \label{pnm1} A $\pn$-module over $\rk$ is a finite free $\rk$-module $M$ together with:
\begin{itemize} \item a connection, that is a $K$-linear map $\nabla:M\rightarrow M$ such that
\[ \nabla(rm)=\partial_t(r)m+r\nabla(m)\;\;\;\;\text{for all}\;\; r\in \rk \;\; \text{and} \;\;m\in M;\]
\item a horizontal Frobenius $\varphi:\sigma^*M:=M\otimes_{\rk,\sigma} \rk \isomto M$.
\end{itemize}
\end{definition}

Then $\pn$-modules over $\rk$ should be considered as $p$-adic analogues of Galois representations, for example, they satisfy a local monodromy theorem (see \cite{Ked04a}) and hence have a canonical monodromy operator $N$ attached to them (see \cite{Mar08}). More specifically, the connection $\nabla$ should be viewed as an analogue of the action of the inertia subgroup $I_F$ and the Frobenius $\varphi$ the action of some Frobenius lift in $G_F$. The analogue for $\pn$-modules of inertia acting unipotently (on an $\ell$-adic representation for $\ell\neq p$) or of a $p$-adic Galois representation being semistable (when $\mathrm{char}(F)=0$) is therefore the connection acting unipotently, i.e. there being a basis $m_1,\ldots,m_n$ such that $\nabla(m_i)\in \rk m_1+\ldots+\rk m_{i-1}$ for all $i$. The analogue of being unramified or crystalline for a $\pn$-module $M$ is therefore the connection acting trivially, or in other words $M$ admitting a basis of horizontal sections. We call such $\pn$-modules $M$ solvable.

Let $\mathcal{E}_K^\dagger\subset \rk$ denote the bounded Robba ring, that is the subring consisting of series $\sum_i a_it^i$ such that $\norm{a_i}$ is bounded, we therefore have the notion of a $\pn$-module over $\ekd$, as in Definition \ref{pnm1}.  The main purpose of the book \cite{LP16} was to define cohomology groups
\[  X\mapsto H^i_\rig(X/\ekd) \]
for $i\geq0$ associated to any $k\lser{\pi}$-variety $X$ (i.e. separated $k\lser{\pi}$-scheme of finite type), as well as versions with compact support $H^i_{c,\rig}(X/\ekd)$ or support in a closed subscheme $Z\subset X$, $H^i_{Z,\rig}(X/\ekd)$. These are $\pn$-modules over $\ekd$ and enjoy all the same formal properties as $\ell$-adic \'etale cohomology for $\ell\neq p$. Here we list a few of them.
\begin{enumerate} \item If $X$ is of dimension $d$ then $H^i_\rig(X/\ekd)=H^i_{c,\rig}(X/\ekd)=H^i_{Z,\rig}(X/\ekd)=0$ for $i$ outside the range $0\leq i\leq 2d$.
\item (K\"unneth formula) For any $X,Y$ over $k\lser{\pi}$ we have
\[ H^n_{c,\rig}(X\times Y/\ekd) \cong \bigoplus_{i+j=n} H^i_{c,\rig}(X/\ekd)\otimes_{\ekd} H^j_{c,\rig}(Y/\ekd) \]
and if $X$ and $Y$ are smooth over $k\lser{\pi}$ we also have
\[ H^n_{\rig}(X\times Y/\ekd) \cong \bigoplus_{i+j=n} H^i_{\rig}(X/\ekd)\otimes_{\ekd} H^j_{\rig}(Y/\ekd).\]
\item (Poincar\'e duality) For any $X$ smooth over $k\lser{\pi}$ of equidimension $d$ we have a perfect pairing
\[ H^i_\rig(X/\ekd) \times H^{2d-i}_{c,\rig}(X/\ekd) \rightarrow H^{2d}_{c,\rig}(X/\ekd)\cong \ekd(-d) \]
where $(-d)$ is the Tate twist which multiplies the Frobenius structure on the constant $\pn$-module $\ekd$ by $q^d$.
\item (Excision) For any closed $Z\subset X$ with complement $U\subset X$ we have long exact sequences
\[ \ldots \rightarrow H^i_{Z,\rig}(X/\ekd) \rightarrow H^i_\rig(X/\ekd) \rightarrow H^i_\rig(U/\ekd) \rightarrow \ldots \]
and 
\[ \ldots \rightarrow H^i_{c,\rig}(U/\ekd) \rightarrow H^i_{c,\rig}(X/\ekd) \rightarrow H^i_{c,\rig}(Z/\ekd) \rightarrow \ldots.\]
\item (Gysin) For any closed immersion $Z\hookrightarrow X$ of smooth schemes over $k\lser{\pi}$, of constant codimension $c$ there is a Gysin isomorphism \[ H^i_{Z,\rig}(X/\ekd)\cong H^{i-2c}_\rig(Z/\ekd)(-c). \]
\item There is a `forget supports' map $H^i_{c,\rig}(X/\ekd)\rightarrow H^i_\rig(X/\ekd)$ which is an isomorphism whenever $X$ is proper over $k\lser{\pi}$.
\item Let $U\subset C$ be an open subcurve of a smooth projective curve $C$ of genus $g$, with complementary divisor $D$ of degree $d$. Then 
\[ \dim_{\ekd} H^1_\rig(U/\ekd)= \begin{cases} 2g-1+d & \text{if } d\geq 1, \\
2g & \text{if }d=0.
\end{cases} \]
\item Let $A$ be an abelian variety over $k\lser{\pi}$ of dimension $g$. Then $H^1_\rig(A/\ekd)$ is (more or less) isomorphic to the contravariant Dieudonn\'e module of the $p$-divisible group $A[p^\infty]$ of $A$, has dimension $2g$, and
\[H^i_\rig(A/\ekd)\cong \textstyle{\bigwedge^i} H^1_\rig(A/\ekd).\]
\end{enumerate}
All of these properties were proved in \cite{LP16}. We may therefore define, for any variety $X/k\lser{\pi}$
\[ H^i_\rig(X/\rk) := H^i_\rig(X/\ekd)\otimes_{\ekd}\rk \]
as $\pn$-modules over $\rk$. That the property of a $\pn$-module being solvable (resp. unipotent) really is the correct analogue of a Galois representation being unramified or crystalline (resp. unipotent or semistable) is suggested by the following result. 

\begin{theorem}[\cite{LP16}, \S5] Let $X/k\lser{\pi}$ be smooth and proper. Then if $X$ has good (resp. semistable reduction) then $H^i_\rig(X/\rk)$ is solvable (resp. unipotent) for all $i\geq0$. If moreover $X$ is an abelian variety, then the converse also holds.
\end{theorem}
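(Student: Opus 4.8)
The plan is to reduce everything to the already-established comparison between $H^i_\rig(X/\ekd)$ and classical crystalline/log-crystalline cohomology from \cite{LP16}, and then to invoke the structural results about $\pn$-modules over $\ekd$ and $\rk$ that were cited above. Concretely, suppose first that $X/k\lser{\pi}$ has good reduction, say $\mathscr{X}/\rk^{\mathrm{int}}$ is smooth and proper with $\mathscr{X}\otimes_{\rk^{\mathrm{int}}} F \cong X$. In \cite{LP16} one shows that in this situation $H^i_\rig(X/\ekd)$ is computed by the relative crystalline (equivalently rigid) cohomology of $\mathscr{X}$ over $\rk^{\mathrm{int}}$, and in particular it admits a basis of horizontal sections for $\nabla$ — the connection is the Gauss--Manin connection coming from a \emph{constant} family. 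First I would therefore quote this comparison to conclude that the $\pn$-module $H^i_\rig(X/\ekd)$, hence also $H^i_\rig(X/\rk)=H^i_\rig(X/\ekd)\otimes_{\ekd}\rk$, is solvable. In the semistable case one argues identically but with log-crystalline cohomology of a strictly semistable model $\mathscr{X}/\rk^{\mathrm{int}}$ in place of crystalline cohomology: the comparison theorem of \cite{LP16} identifies $H^i_\rig(X/\rk)$ with this log-crystalline cohomology base-changed to $\rk$, and the monodromy operator $N$ is the usual log-crystalline monodromy, which is nilpotent. Nilpotence of $N$ is exactly the statement that $\nabla$ acts unipotently, i.e. the $\pn$-module is unipotent.

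For the converse when $X=A$ is an abelian variety, I would use property (8) in the list above: $H^1_\rig(A/\ekd)$ is (up to the usual normalisations) the contravariant Dieudonn\'e module of the $p$-divisible group $A[p^\infty]$, and $H^i_\rig(A/\ekd)\cong\bigwedge^i H^1_\rig(A/\ekd)$. Thus $A$ has good or semistable reduction if and only if $A[p^\infty]$ — equivalently, by the N\'eron--Ogg--Shafarevich criterion in the form appropriate to $p$-divisible groups over equicharacteristic local fields — extends to a $p$-divisible (resp. a semi-abelian-type) object over $\rk^{\mathrm{int}}$, and one must match this with solvability (resp. unipotence) of the associated $\pn$-module $H^1_\rig(A/\rk)$. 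The key point is a Dieudonn\'e-theoretic dictionary: a $\pn$-module over $\rk$ of the form arising from a $p$-divisible group is solvable precisely when that group has good reduction, and unipotent precisely when it is an extension of an \'etale-type piece by a multiplicative-type piece with the middle a good-reduction part — i.e. semistable reduction of $A$. Since good/semistable reduction of $A$ is detected on $A[p^\infty]$ (using that $\ell\ne p$ criteria are unavailable here, so one genuinely needs the $p$-adic/Dieudonn\'e input), and all higher $H^i$ are exterior powers of $H^1$, solvability (resp. unipotence) of all $H^i_\rig(A/\rk)$ follows from, and conversely forces, the corresponding reduction property.

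The main obstacle is the converse direction, and specifically making the Dieudonn\'e-module dictionary precise over the Robba ring: one needs that the functor $A\mapsto H^1_\rig(A/\rk)$ (or rather its restriction to the relevant category of $\pn$-modules) reflects the reduction type, which amounts to a full-faithfulness / essential-image statement for Dieudonn\'e theory of $p$-divisible groups over $k\lser{\pi}$ with values in $\pn$-modules over $\ekd$, together with the fact that ``unipotent $\nabla$'' on the Dieudonn\'e side corresponds exactly to the group being an extension of \'etale by (good-reduction)-by-multiplicative. I expect this to follow by combining the comparison results of \cite{LP16} with Kedlaya's local monodromy theorem \cite{Ked04a} and the classification of $\pn$-modules, but it is the step that requires genuine care rather than formal manipulation. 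The forward direction, by contrast, is essentially a citation of the comparison theorems together with the definitions of solvable and unipotent recalled above.
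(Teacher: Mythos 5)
The statement you are proving is not actually proved in this paper: it is quoted verbatim from \cite{LP16}, \S 5, so the relevant comparison is with the argument given there. Your forward direction follows that argument in outline and is essentially correct: good (resp.\ semistable) reduction gives, via the comparison theorems of \cite{LP16} (the semistable case is exactly the equicharacteristic $C_{\mathrm{st}}$ statement recalled as Proposition \ref{cstpas}), an isomorphism of $H^i_\rig(X/\rk)$ with the (log-)crystalline cohomology of the \emph{special fibre} over $W$, base-changed to $\rk$, and such modules are by definition solvable (resp.\ lie in the essential image of $(\varphi,N)$-modules, i.e.\ are unipotent). One correction: a model of $X$ lives over $R=k\pow{\pi}$, not over $\rk^{\mathrm{int}}$; the ring $\rk^{\mathrm{int}}$ is a $p$-adic lift of $F=k\lser{\pi}$ (it has $\rk^{\mathrm{int}}/(p)\cong F$), so ``$\mathscr{X}/\rk^{\mathrm{int}}$ smooth and proper with generic fibre $X$'' does not parse; the horizontality you want comes from the identification with the constant-coefficient crystalline cohomology of $Y/W$, not from a ``constant family over $\rk^{\mathrm{int}}$''.

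For the converse (abelian varieties) your strategy --- reduce to $H^1$ via exterior powers and then argue through the Dieudonn\'e module of $A[p^\infty]$ --- is indeed the route taken in \cite{LP16}, but the step you defer is precisely the mathematical content, and the tools you name will not supply it. Kedlaya's local monodromy theorem gives quasi-unipotence of any $\pn$-module over $\rk$ after a finite extension; it says nothing about whether solvability or unipotence of $H^1_\rig(A/\rk)$ forces an \emph{integral} structure, i.e.\ an extension of $A[p^\infty]$ (resp.\ a filtration with multiplicative and good-reduction pieces) over $k\pow{\pi}$. What is actually needed is (i) de Jong's full-faithfulness/extension theorem for Dieudonn\'e crystals and homomorphisms of $p$-divisible groups over $k\pow{\pi}$, to pass from a solvable (resp.\ unipotent) $\pn$-module back to a statement about $A[p^\infty]$ over the DVR, and (ii) the theorem (Grothendieck/SGA~7 together with de Jong in equal characteristic $p$) that good resp.\ semistable reduction of $A$ is detected by its $p$-divisible group. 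Without these two inputs the ``dictionary'' you invoke is an assertion, not a proof, so as written the converse direction has a genuine gap --- one you correctly identified but did not close, and which a ``classification of $\pn$-modules'' alone cannot close, since non-isogenous abelian varieties with and without good reduction can have abstractly similar $\pn$-modules unless the integral (crystal-level) information is used.
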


In \cite{LP16} was also shown an equicharacteristic analogue of the $C_{\mathrm{st}}$-conjecture, namely that when $\mathcal{X}/R$ is proper and semistable, the cohomology $H^i_\rig(X/\rk)$ of the generic fibre can be recovered from the log-crystalline cohomology $H^i_{\log\text{-}\mathrm{cris}}(Y^{\log}/W^{\log})\otimes_W K$ of the special fibre. Our task for the remainder of this section is to generalise this result to algebraic spaces (with fibres that are schemes).

So fix a smooth and proper variety $X/F$ and a semistable model $\mathcal{X}/R$ (see Definition \ref{defmod}) for $X$. Let $Y^{\log}$ denote the special fibre of $\mathcal{X}$ with its induced log structure, and let $W^{\log}$ denote $W$ with the log structure defined by $1\mapsto 0$. Then the log-crystalline cohomology $H^i_{\log\text{-}\mathrm{cris}}(Y^{\log}/W^{\log})\otimes_W K$ is a $(\varphi,N)$-module over $K$, i.e. a vector space with semilinear Frobenius $\varphi$ and nilpotent monodromy operator $N$ satisfying $N\varphi=q\varphi N$, and the rigid cohomology $H^i_\rig(X/\rk)$ is a $\pn$-module over $\rk$. There is a fully faithful functor
\[ (-)\otimes_K \rk: \underline{\mathbf{M}\Phi}^N_K\rightarrow  \underline{\mathbf{M}\Phi}^\nabla_{\rk} \]
from the category $\underline{\mathbf{M}\Phi}^N_K$ of $(\varphi,N)$-modules over $K$ to that of $\pn$-modules over $\rk$, whose essential image consists exactly of the unipotent $\pn$-modules, i.e. those which are iterated extensions of constant ones. The analogue of Fontaine's $C_{\mathrm{st}}$ conjecture in the equicharacteristic world is then the following.

\begin{proposition}\label{cstpas} There is an isomorphism \[\left(H^i_{\log\text{-}\mathrm{cris}}(Y^{\log}/W^{\log})\otimes_W K\right)\otimes_K \rk \cong H^i_\rig(X/\rk)\]
in $ \underline{\mathbf{M}\Phi}^\nabla_{\rk} $.
\end{proposition}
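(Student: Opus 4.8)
The plan is to reduce to the case where $\mathcal{X}$ is a \emph{scheme}, which is the content of \cite{LP16}, \S5, by means of étale cohomological descent. As $\mathcal{X}$ is a proper, hence quasi-compact, algebraic space over $R$, it admits an étale surjection $\mathcal{X}_0\to\mathcal{X}$ from a (quasi-compact, quasi-separated) scheme; let $\mathcal{X}_\bullet\to\mathcal{X}$ be the associated \v{C}ech nerve, so that each $\mathcal{X}_n$ is a scheme and $\mathcal{X}_n\to\mathcal{X}$ is étale. Since semistability is an étale local condition (Definition \ref{defmod}), each $\mathcal{X}_n$ is again semistable over $R$; denote by $X_\bullet$ its generic fibre and by $Y_\bullet^{\log}$ its special fibre with the induced log structure. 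Then $X_\bullet\to X$ is an étale hypercover of the $k\lser{\pi}$-variety $X$, and $Y^{\log}_\bullet\to Y^{\log}$ is an étale hypercover of $Y^{\log}$.

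Next I would invoke étale cohomological descent for both cohomology theories. For log-crystalline cohomology this is standard, the logarithmic de Rham--Witt complex being a sheaf for the étale topology, and yields a spectral sequence
\[ E_1^{s,t}=H^t_{\lc}(Y^{\log}_s/W^{\log})\otimes_W K\ \Longrightarrow\ H^{s+t}_{\lc}(Y^{\log}/W^{\log})\otimes_W K \]
of $(\varphi,N)$-modules over $K$; for $H^\bullet_\rig(-/\ekd)$ the analogous descent either is part of the formalism of \cite{LP16} or can be deduced from cohomological descent for classical rigid cohomology, and gives a spectral sequence
\[ E_1^{s,t}=H^t_\rig(X_s/\rk)\ \Longrightarrow\ H^{s+t}_\rig(X/\rk) \]
of $\pn$-modules over $\rk$. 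Both converge, since cohomology vanishes outside degrees $0,\ldots,2\dim X$, so that in each total degree all but finitely many $E_1$-terms vanish. One may run these spectral sequences in the abelian categories of all, not necessarily finite free, $(\varphi,N)$- and $\pn$-modules; the abutments are nonetheless the finite objects of interest, because $X$ is smooth and proper and $Y^{\log}$ is proper and log smooth.

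Since $\rk$ is flat over $K$ and the functor $(-)\otimes_K\rk\colon\underline{\mathbf{M}\Phi}^N_K\to\underline{\mathbf{M}\Phi}^\nabla_{\rk}$ is exact, applying it term by term to the first spectral sequence produces a spectral sequence in $\underline{\mathbf{M}\Phi}^\nabla_{\rk}$ with $E_1^{s,t}=\bigl(H^t_{\lc}(Y^{\log}_s/W^{\log})\otimes_W K\bigr)\otimes_K\rk$ and abutment $\bigl(H^{s+t}_{\lc}(Y^{\log}/W^{\log})\otimes_W K\bigr)\otimes_K\rk$. For each $s$ the scheme $\mathcal{X}_s$ is semistable over $R$, so \cite{LP16}, \S5 (whose comparison is of Hyodo--Kato type, hence local in nature, and so applies to semistable $R$-schemes that need not be proper) provides a natural isomorphism $\bigl(H^t_{\lc}(Y^{\log}_s/W^{\log})\otimes_W K\bigr)\otimes_K\rk\cong H^t_\rig(X_s/\rk)$ in $\underline{\mathbf{M}\Phi}^\nabla_{\rk}$. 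Its naturality in the semistable $R$-scheme makes it compatible with the simplicial structure maps of $\mathcal{X}_\bullet$, hence identifies the two spectral sequences above; passing to abutments yields the claimed isomorphism $\bigl(H^i_{\lc}(Y^{\log}/W^{\log})\otimes_W K\bigr)\otimes_K\rk\cong H^i_\rig(X/\rk)$ in $\underline{\mathbf{M}\Phi}^\nabla_{\rk}$.

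The step I expect to be the real obstacle is the conjunction of two points: establishing (or extracting cleanly from \cite{LP16}) that $H^\bullet_\rig(-/\ekd)$ satisfies étale cohomological descent, and checking that the comparison isomorphism of \cite{LP16} is functorial for morphisms of semistable $R$-schemes; it is precisely this functoriality that allows the two descent spectral sequences to be matched up. Everything else, namely that the $\mathcal{X}_n$ are semistable schemes, that the induced log structures are the expected ones, convergence, and flatness of $\rk$ over $K$, is routine. Equivalently, the whole argument can be read as making precise the assertion that the construction of the comparison in \cite{LP16} is étale local, and therefore goes through verbatim for algebraic spaces whose special fibre is a scheme.
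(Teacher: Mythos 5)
Your reduction to the scheme case by an \'etale hypercover founders at the termwise comparison. The comparison isomorphism of \cite{LP16}, Chapter 5 is proved for \emph{proper} semistable $R$-schemes, and your parenthetical claim that it is ``local in nature, and so applies to semistable $R$-schemes that need not be proper'' is exactly the point that fails: an \'etale cover $\mathcal{X}_0\rightarrow\mathcal{X}$ of an algebraic space by a scheme is essentially never proper over $R$, and for a non-proper semistable scheme $\mathcal{X}_s$ the group $H^t_{\lc}(Y_s^{\log}/W^{\log})\otimes_W K$ is pathological (typically infinite dimensional) and does not compute the overconvergent cohomology $H^t_\rig(X_s/\rk)$ of the generic fibre. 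Already for $\mathcal{X}_s=\mathbb{A}^1_R$ one has $H^1_\rig(\mathbb{A}^1_F/\rk)=0$ while $H^1_{\cris}(\mathbb{A}^1_k/W)\otimes_W K$ is infinite dimensional, because crystalline (and log-crystalline) cohomology of a non-proper special fibre sees no overconvergence condition along the boundary. So the two descent spectral sequences you want to identify do not have isomorphic $E_1$-terms, and the argument as written does not go through; repairing it would require a Hyodo--Kato type comparison for open semistable schemes (e.g.\ with compact supports or with overconvergence along the boundary built in), which is a substantially harder statement than the one being proved.

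For contrast, the paper sidesteps the hypercover entirely: by Olsson's extension of logarithmic crystalline and Hyodo--Kato cohomology to algebraic stacks (in particular base change and the construction of the monodromy operator in \cite{Ols07a}), the global proof of the scheme case in \cite{LP16} can be run verbatim with the algebraic space $\mathcal{X}$ in place of a scheme, keeping properness of the total object throughout rather than localising. Your secondary worries (\'etale cohomological descent for $H^\bullet_\rig(-/\ekd)$ as $\pn$-modules, and functoriality of the comparison) are indeed nontrivial, but they are downstream of the main obstruction above.
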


\begin{proof} Thanks to the extension of logarithmic crystalline cohomology and Hyodo--Kato cohomology to algebraic stacks by Olsson in \cite{Ols07a}, in particular base change (Theorem 2.6.2) and the construction of the monodromy operator (\S6.5), the same proof as given in the scheme case (see Chapter 5 of \cite{LP16}) works for algebraic spaces as well.
\end{proof}

In \cite{LP16} was defined the notion of an overconvergent $F$-isocrystal on $X$, relative to $K$. These play the role in the $p$-adic theory of lisse $\ell$-adic sheaves in $\ell$-adic cohomology. Classically, i.e. over $k$, one can associate these objects to $p$-adic representations of the fundamental group, and we will need to do this also over Laurent series fields. We only need this for representations $\rho$ with finite image, and in this case the construction is simple. So let $\rho:\pi_1^{\et}(X,\overline{x})\rightarrow G$ be a finite quotient of the \'etale fundamental group of a smooth and proper variety over $F$, then this corresponds to a finite, \'etale, Galois cover $f:X'\rightarrow X$, and hence from results of \cite{LP16} we have a pushforward functor
\[ f_*:F\text{-}\mathrm{Isoc}^\dagger(X'/K)\rightarrow F\text{-}\mathrm{Isoc}^\dagger(X/K)\]
from overconvergent $F$-isocrystals on $X'$ to those on $X$. We may therefore define $V_\rho\in F\text{-}\mathrm{Isoc}^\dagger(X/K)$ to be the pushforward $f_*\mathcal{O}^\dagger_{X'/K}$ of the constant isocrystal on $X'$.

\section{SNCL varieties}\label{snclv}

In this section, following F. Kato in \S11 of \cite{Kat96}, we will introduce the key notion of a simple normal crossings log variety over $k$, or SNCL variety for short. 

\begin{definition} We say a geometrically connected variety $Y/k$ is a normal crossings variety over $k$ if it is \'etale locally \'etale over $k[x_0,\ldots,x_d]/(x_0\cdots x_r)$. 
\end{definition}

\begin{definition} Let $Y$ denote a normal crossings variety over $k$, and let $M_Y$ be a log structure on $Y$. Then we say that $M_Y$ is of embedding type if \'etale locally on $Y$ it is (isomorphic to) the log structure associated to the homomorphism of monoids
\[\N^{r+1} \rightarrow  \frac{k[x_0,\ldots,x_d]}{(x_0\cdots x_r)}\]
sending the $i$th basis element of $\N^{r+1}$ to $x_i$.
\end{definition}

Note that the existence of such a log structure imposes conditions on $Y$, and the log structure $M_Y$ is \emph{not} determined by the geometry of the underlying scheme $Y$. In fact, one can show that such a log structure exists if and only if, denoting by $D$ the singular locus of $Y$, there exists a line bundle $\mathcal{L}$ on $Y$ such that $\mathcal{E}\mathit{xt}^1(\Omega^1_{Y/k},\mathcal{O}_Y) \cong \mathcal{L}\otimes \mathcal{O}_D$ (see for example Theorem 11.7 of \cite{Kat96}).

\begin{definition} We say that a log scheme $Y^{\log}$ of embedding type is of semistable type if there exists a log smooth morphism $Y^{\log}\rightarrow \spec{k}^{\log}$ where the latter is endowed with the log structure of the punctured point. 
\end{definition}

Again, the existence of such a morphism implies conditions on $Y$, namely that $\mathcal{E}\mathit{xt}^1(\Omega^1_{Y/k},\mathcal{O}_Y) \cong  \mathcal{O}_D$ (where again $D$ is the singular locus).

\begin{definition} A SNCL variety over $k$ is a smooth log scheme $Y^{\log}$ over $k^{\log}$ of semistable type, such that the irreducible components of $Y$ are all smooth.
\end{definition}

Any SNCL variety $Y^{\log}$ is log smooth over $k^{\log}$ by definition, and for all $p\geq0$ we will let $\Lambda^p_{Y^{\log}/k^{\log}}$ denote the locally free sheaf of logarithmic $p$-forms on $Y$. We will also let $\omega_Y=\Lambda^{\dim Y}_{Y^{\log}/k^{\log}}$ denote the line bundle of top degree differential forms.

\begin{proposition} \label{dualsheaf} The sheaf $\omega_Y$ is a dualising sheaf for $Y$.
\end{proposition}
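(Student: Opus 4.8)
The plan is to prove this by reducing to the standard étale-local model and then using adjunction. Since $Y$ is a normal crossings variety, it is étale-locally étale over $k[x_0,\ldots,x_d]/(x_0\cdots x_r)$, hence étale-locally a reduced hypersurface in a smooth affine $k$-scheme; in particular $Y$ is Cohen--Macaulay (indeed Gorenstein) of pure dimension $d:=\dim Y$, so its dualising complex has the form $\omega_Y^{\mathrm{dual}}[d]$ for a line bundle $\omega_Y^{\mathrm{dual}}$, and the content of the proposition is that $\omega_Y=\Lambda^d_{Y^{\log}/k^{\log}}$ is isomorphic to $\omega_Y^{\mathrm{dual}}$. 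As the formation of both sheaves is local for the étale topology on $Y$, the strategy is to produce a canonical isomorphism étale-locally and then glue.

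For the local computation I would work étale-locally so that $Y=V(f)$, with $f=x_0\cdots x_r$ a reduced simple normal crossings divisor in a smooth affine $k$-scheme $\mathcal{Z}$. Equip $\mathcal{Z}$ with the log structure associated to the divisor $V(f)$ and $S:=\A^1_k$ with the log structure at the origin; then $t\mapsto f$ defines a log smooth morphism $g\colon\mathcal{Z}^{\log}\to S^{\log}$ whose log fibre over $0$ is exactly $Y^{\log}$ (this is precisely the "semistable type" picture). By base change for logarithmic differentials, $\Lambda^d_{Y^{\log}/k^{\log}}\cong\Omega^d_{\mathcal{Z}^{\log}/S^{\log}}\big|_Y$. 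Since $g$ is log smooth the sequence
\[ 0\to g^*\Omega^1_{S^{\log}/k}\to\Omega^1_{\mathcal{Z}^{\log}/k}\to\Omega^1_{\mathcal{Z}^{\log}/S^{\log}}\to 0 \]
is exact with locally free terms, so taking top exterior powers and using $\Omega^1_{S^{\log}/k}=\mathcal{O}_S\,d\log t$ together with the standard identification $\Omega^{d+1}_{\mathcal{Z}^{\log}/k}=\Omega^{d+1}_{\mathcal{Z}/k}(\log V(f))\cong\omega_{\mathcal{Z}}\otimes\mathcal{O}_{\mathcal{Z}}(V(f))$ yields $\Omega^d_{\mathcal{Z}^{\log}/S^{\log}}\cong\omega_{\mathcal{Z}}(V(f))$. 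Restricting to $Y=V(f)$ and invoking adjunction for the effective Cartier divisor $Y\subset\mathcal{Z}$ in the smooth scheme $\mathcal{Z}$ gives $\Lambda^d_{Y^{\log}/k^{\log}}\cong\omega_{\mathcal{Z}}\otimes\mathcal{O}_{\mathcal{Z}}(Y)\otimes\mathcal{O}_Y=\omega_Y^{\mathrm{dual}}$.

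To globalise, I would identify both sheaves intrinsically via Poincaré residues. Restriction of a local logarithmic $d$-form to an irreducible component $Y_j$ lands in $\Omega^d_{Y_j}(\log D_j)=\omega_{Y_j}(D_j)$, where $D_j\subset Y_j$ is the double locus lying on $Y_j$, and one checks that $\Lambda^d_{Y^{\log}/k^{\log}}$ is the kernel of the resulting difference-of-residues morphism $\bigoplus_j\nu_{j*}\omega_{Y_j}(D_j)\to\bigoplus_{j<k}\nu_{jk*}\omega_{Y_{jk}}(D_{jk})$, with $\nu_{j},\nu_{jk}$ the natural finite maps from the intersection strata. The standard residue resolution of the dualising sheaf of a reduced normal crossings variety exhibits $\omega_Y^{\mathrm{dual}}$ as the kernel of exactly the same morphism, and under the local identification above the two residue descriptions coincide; hence the local isomorphisms are canonical and glue. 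The main obstacle is precisely this last compatibility: a priori the adjunction isomorphism $\Lambda^d_{Y^{\log}/k^{\log}}\cong\omega_{\mathcal{Z}}(V(f))\big|_Y$ depends on the auxiliary embedding $Y\hookrightarrow\mathcal{Z}$ and on the function $f$ cutting out the fibration, so the real work lies in verifying that it is compatible with Poincaré residues along the components (equivalently, independent of these choices). Everything else — Cohen--Macaulayness, the logarithmic differential sequence, and adjunction — is formal, as is the passage to the residue-sequence description once one writes down the concrete local shape of logarithmic top-forms on a normal crossings variety.
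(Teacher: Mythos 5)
The paper disposes of this statement by citing Tsuji (Proposition 2.14 and Theorem 2.21 of \cite{Tsu99a}), where the identification of the top logarithmic forms of a log smooth Cohen--Macaulay scheme over the log point with its dualising sheaf is established in general; your proposal instead tries to reprove this directly, and its overall shape (local adjunction computation plus a residue description to globalise) is the right one. The problem is that, as written, the argument stops exactly where the content lies. Any two line bundles on $Y$ are \'etale-locally isomorphic, so the local computation $\Lambda^d_{Y^{\log}/k^{\log}}\cong\omega_{\mathcal{Z}}(V(f))\big|_Y\cong\omega_Y^{\mathrm{dual}}$ by itself proves nothing about a global isomorphism; everything hinges on producing \emph{canonical} local isomorphisms that are independent of the auxiliary embedding $Y\hookrightarrow\mathcal{Z}$ and of the equation $f$, and on verifying they agree on overlaps. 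You name this yourself as ``the main obstacle'' and ``the real work,'' but you do not carry it out: the sentence ``under the local identification above the two residue descriptions coincide'' is precisely the assertion to be proved, not a proof of it.

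A second, related gap is the appeal to ``the standard residue resolution of the dualising sheaf of a reduced normal crossings variety'' exhibiting $\omega_Y^{\mathrm{dual}}$ as the kernel of the difference-of-residues map. For an SNC variety that is not presented as a global divisor in a smooth scheme this is not formal: it requires genuine duality input, for instance Grothendieck duality for the (finite) normalisation map $\nu\colon\widetilde{Y}\to Y$ together with the computation of the conductor ideal, identifying $\nu^{!}\omega_Y^{\mathrm{dual}}$ with $\omega_{\widetilde{Y}}(\widetilde{D})$ and $\omega_Y^{\mathrm{dual}}$ with the residue kernel inside $\nu_*\omega_{\widetilde{Y}}(\widetilde{D})$ --- or else a citation playing the same role as the paper's reference to \cite{Tsu99a}. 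Once that description is in place, the comparison with the analogous residue description of $\Lambda^d_{Y^{\log}/k^{\log}}$ (whose restriction to a component $Y_j$ is $\omega_{Y_j}(D_j)$, as the paper uses elsewhere) does give a canonical isomorphism, so your route can be completed; but in its present form the proposal defers both the canonicity of the local isomorphisms and the duality-theoretic identification that would make the gluing work, which together constitute the whole point of the proposition.
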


\begin{proof} Follows immediately from Proposition 2.14 and Theorem 2.21 of \cite{Tsu99a}.
\end{proof}

We will also need a spectral sequence for the cohomology of semistable varieties. This should be well-known, but we could not find a suitable reference.

\begin{lemma} \label{sslc} Let $Y^{\log}$ be a SNCL variety over $k$ of dimension $n$, with smooth components $Y_1,\ldots,Y_N$. For each $0\leq s\leq n$ write 
\[ Y^{(s)}=\CMcoprod_{\substack{ I\subset \{1,\ldots,N\}\\ \norm{I}=s+1}} \bigcap_{i\in I} Y_i,\]
and let $i_s:Y^{(s)}\rightarrow Y$ denote the natural map. For $1\leq t\leq s+1$ let 
\[ \partial^s_t: Y^{(s+1)}\rightarrow Y^{(s)} \]
be the canonical map induced by the natural inclusion $Y_{\{i_1,\ldots,i_{s+1}\}}\rightarrow Y_{\{i_1,\ldots,\hat{i}_t,\ldots,i_{s+1}\}}$. Then the there exists an exact sequence
\[  0 \rightarrow \cur{O}_Y\overset{d^{-1}}{\rightarrow} i_{0*} \cur{O}_{Y^{(0)}}\overset{d^0}{\rightarrow} \ldots \overset{d^{n-1}}{\rightarrow} i_{n*}\cur{O}_Y^{(n)} \rightarrow 0 \]
of sheaves on $Y$ where $d^{-1}=i_0^*$ and 
\[ d^s = \sum_{t=1}^{s+1} (-1)^t \partial_t^{s*}\]
for $s\geq 0$.
\end{lemma}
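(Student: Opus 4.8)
The plan is to construct the Čech-type complex locally and glue. First I would observe that the statement is étale local on $Y$, so by the definition of a normal crossings variety I may assume $Y$ is étale over $A := k[x_0,\ldots,x_d]/(x_0\cdots x_r)$; in fact, since all the sheaves and maps in sight are compatible with étale base change and the components $Y_i$ pull back to the loci $\{x_{i}=0\}$, it suffices to treat the model case $Y = \spec{A}$, where the irreducible components are the $r+1$ hyperplanes $V(x_i)$ for $0 \le i \le r$. Here each $Y^{(s)}$ is the disjoint union, over $(s+1)$-element subsets $I \subset \{0,\ldots,r\}$, of the smooth affine scheme $V(x_i : i \in I) \cong \spec{k[x_j : j \notin I, \text{ or } j > r]}$, and $i_{s*}\cur{O}_{Y^{(s)}}$ is the product of the corresponding quotient rings $A/(x_i : i\in I)$.

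Next I would identify the complex. The key point is that, Zariski-locally on the model, the complex
\[ 0 \rightarrow A \rightarrow \prod_{\norm{I}=1} A/(x_i)_{i\in I} \rightarrow \prod_{\norm{I}=2} A/(x_i)_{i\in I} \rightarrow \cdots \]
with the alternating-sum-of-restrictions differential, is exactly the (augmented) Čech complex computing cohomology of the structure sheaf on the simplicial scheme $Y^{(\bullet)}$, or equivalently it is obtained by tensoring $A$ with the standard exact Koszul-type resolution attached to the monomial ideal $(x_0\cdots x_r)$. Concretely, I would prove exactness by a direct combinatorial argument: a section of $i_{s*}\cur{O}_{Y^{(s)}}$ is a tuple $(f_I)_{\norm I = s+1}$ of polynomials, and exactness amounts to the statement that the simplicial abelian group $[s] \mapsto \prod_{\norm I = s+1} A/(x_i)_{i\in I}$ has the homology of $A$. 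One clean way: filter by monomials. For a fixed monomial $m$ in the variables $x_0,\ldots,x_d$, let $S(m) = \{ i \le r : x_i \nmid m \}$ be the set of "forbidden" component-indices; then the coefficient of $m$ in $f_I$ is allowed to be nonzero only when $I \cap S(m) = \emptyset$, i.e. $I \subset \{0,\ldots,r\} \setminus S(m)$. So the $m$-graded piece of the complex is the simplicial cochain complex of the full simplex on the vertex set $\{0,\ldots,r\}\setminus S(m)$, augmented by $k$ in degree $-1$. If $S(m) \neq \{0,\ldots,r\}$ this augmented simplex complex is acyclic (a nonempty simplex is contractible), while if $S(m) = \{0,\ldots,r\}$ — which happens precisely when $m$ is divisible by no $x_i$, i.e. $m$ is a monomial in $x_{r+1},\ldots,x_d$ — the complex in positive degrees vanishes and only the degree $-1$ term $k\cdot m$ survives, accounting exactly for $A$. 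Summing over $m$ gives the exactness of the whole complex, with $d^{-1} = i_0^*$ the pullback and $d^s = \sum_{t=1}^{s+1}(-1)^t \partial_t^{s*}$ as claimed.

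Finally I would check that these local complexes glue to a complex of sheaves on $Y$: the sheaves $i_{s*}\cur{O}_{Y^{(s)}}$ and the maps $\partial^{s*}_t$ are globally defined, and the local computation above shows the resulting complex of Zariski (equivalently étale) sheaves is exact, since exactness can be checked on stalks. I expect the main obstacle to be purely bookkeeping: namely pinning down the sign conventions and the indexing of faces so that $d^{s+1} \circ d^s = 0$ holds on the nose (this is the usual simplicial identity $\partial_t \partial_u = \partial_{u}\partial_{t+1}$ for $t \ge u$, repackaged), and making sure the étale-local normal form for a normal crossings variety is genuinely compatible with the global combinatorics of which components meet — i.e. that there are no "monodromy" subtleties in how the branches through a point are labelled. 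Since we are only asserting existence of the complex, not canonicity of a labelling, this is harmless, but it is the one place where a little care is needed.
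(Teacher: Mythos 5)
Your overall route is the same as the paper's: define the complex globally, note that exactness is an \'etale-local question, reduce by \'etale (flat) base change to the model case $A=k[x_0,\ldots,x_d]/(x_0\cdots x_r)$ with components $V(x_i)$, and verify exactness there by a direct computation. The paper leaves that last computation as ``straightforward'', and your idea of grading by monomials and identifying each graded piece with an augmented cochain complex of a simplex is a perfectly good way to carry it out; your closing remark about labelling of branches is also fine, since smoothness of the components rules out self-intersections, so the branches through a point belong to pairwise distinct global components.

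However, the combinatorial bookkeeping in the key computation is inverted, and as written the final accounting does not establish exactness. A monomial $m$ is nonzero in $A/(x_i : i\in I)$ precisely when no $x_i$ with $i\in I$ divides $m$, i.e.\ when $I\subseteq S(m):=\{i\le r : x_i\nmid m\}$ --- not when $I\cap S(m)=\emptyset$: with your definition of $S(m)$, the ``forbidden'' indices are those with $x_i\mid m$, since then $m\equiv 0$ in $A/(x_i)$. Hence the $m$-graded piece of the augmented complex is the augmented simplicial cochain complex of the full simplex on the vertex set $S(m)$, not on its complement. With the correct identification, $S(m)=\emptyset$ happens exactly when $x_0\cdots x_r\mid m$, i.e.\ when $m=0$ in $A$, so every nonzero monomial of $A$ contributes an acyclic augmented piece and the whole augmented complex is exact; there is no residual special case. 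Your special case (``$m$ a monomial in $x_{r+1},\ldots,x_d$: only the degree $-1$ term survives'') is a symptom of the inversion: such an $m$ restricts to a nonzero element of every $A/(x_i:i\in I)$, so its graded piece is the full simplex on all of $\{0,\ldots,r\}$ and is acyclic; if your description were correct, $m$ would lie in the kernel of $d^{-1}$ and exactness at $\mathcal{O}_Y$ would fail, and the classes you say ``account for $A$'' would only span $k[x_{r+1},\ldots,x_d]$, not $A$. The fix is a one-line correction (use the condition $I\subseteq S(m)$ and the vertex set $S(m)$ throughout), after which your argument goes through and agrees with the paper's intended local computation.
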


\begin{proof} We define a complex
\[ 0 \rightarrow \cur{O}_Y\rightarrow i_{0*} \cur{O}_{Y^{(0)}}\rightarrow \ldots \rightarrow i_{n*}\cur{O}_Y^{(n)} \rightarrow 0 \]
using the formulae in the statement of the lemma, to check it is in fact exact (or indeed, to check that it is even a complex) we may work locally, and hence assume that $Y$ is smooth over $\spec{\frac{k[x_1,\ldots,x_{d}]}{(x_1\ldots x_r)}}$. But now we can just use flat base change to reduce to the case where $Y=\spec{\frac{k[x_1,\ldots,x_{d}]}{(x_1\ldots x_r)}}$, which follows from a straightforward computation.\end{proof}

\begin{corollary} In the above situation, there exists a spectral sequence
\[ E^{s,t}_1:=H^t(Y^{(s)},\cur{O}_{Y^{(s)}})\Rightarrow H^{s+t}(Y,\cur{O}_Y). \]
\end{corollary}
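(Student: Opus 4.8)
The plan is to feed the resolution of Lemma~\ref{sslc} into the standard hypercohomology spectral sequence of a bounded complex of sheaves. Write $\cur{K}^\bullet$ for the complex of $\cur{O}_Y$-modules with $\cur{K}^s = i_{s*}\cur{O}_{Y^{(s)}}$ placed in degree $s$ for $0 \leq s \leq n$, and with differentials the maps $d^s = \sum_{t=1}^{s+1}(-1)^t \partial_t^{s*}$ of the lemma. Then Lemma~\ref{sslc} says precisely that the augmentation $\cur{O}_Y \to \cur{K}^\bullet$ (i.e. $d^{-1}=i_0^*$) is a quasi-isomorphism, so that $H^m(Y,\cur{O}_Y) \cong \mathbb{H}^m(Y,\cur{K}^\bullet)$ for all $m$, where $\mathbb{H}$ denotes hypercohomology.

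Next I would invoke the spectral sequence of the stupid (column) filtration $\sigma_{\geq s}\cur{K}^\bullet$ on this complex, namely
\[ E_1^{s,t} = H^t(Y,\cur{K}^s) \Longrightarrow \mathbb{H}^{s+t}(Y,\cur{K}^\bullet), \]
with $d_1$ induced by $d^s$. Since $Y$ is a variety of finite type over $k$ it has only finitely many irreducible components, so $\cur{K}^\bullet$ is concentrated in degrees $0 \leq s \leq n$; the spectral sequence therefore converges without any convergence issues, and by the previous paragraph its abutment is $H^{s+t}(Y,\cur{O}_Y)$.

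It remains to identify the $E_1$-page with the groups in the statement. Each morphism $i_s : Y^{(s)} \to Y$ is a finite disjoint union of closed immersions (the various inclusions $\bigcap_{i\in I}Y_i \hookrightarrow Y$ for $\norm{I}=s+1$), hence is a finite, and in particular affine, morphism. Therefore $R^q i_{s*}\cur{O}_{Y^{(s)}} = 0$ for $q>0$, and the Leray spectral sequence for $i_s$ collapses to an isomorphism $H^t(Y, i_{s*}\cur{O}_{Y^{(s)}}) \cong H^t(Y^{(s)},\cur{O}_{Y^{(s)}})$. Substituting this into $E_1^{s,t}$ gives exactly the asserted spectral sequence.

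I do not anticipate a genuine obstacle here: the only points needing (minimal) care are observing that $i_s$ is affine, so that $i_{s*}$ transports cohomology of $Y^{(s)}$ to cohomology of $Y$ with no correction term, and noting that finiteness of the component set makes $\cur{K}^\bullet$ bounded so that the hypercohomology spectral sequence is honestly convergent; both are immediate from the hypotheses on $Y$.
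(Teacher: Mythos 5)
Your proof is correct and is essentially the argument the paper has in mind: the corollary is stated as an immediate consequence of Lemma \ref{sslc}, and your route---viewing the exact sequence as a resolution $\cur{O}_Y \to \cur{K}^\bullet$, taking the hypercohomology spectral sequence of the stupid filtration, and using that each $i_s$ is finite (hence affine, so $i_{s*}$ is exact on quasi-coherent sheaves) to identify the $E_1$-terms---is exactly the standard deduction being invoked.
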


\section{Some useful results}

In this section we prove three lemmas that will come in handy later on. The first characterises surfaces with effective anticanonical divisor of a certain form, analogous to Lemma 3.3.7 of \cite{Per77} in the complex case.

\begin{lemma} \label{paac} Let $k$ be an algebraically closed field, and $V$ a surface with canonical divisor $K_V$. Let $\{C_i\}$ be a non-empty family of smooth curves $C_i$ on $V$, such that the divisor $D=\sum_iC_i$ is a simple normal crossings divisor, and we have $K_V+D=0$ in $\mathrm{Pic}(V)$. Then one of the following must happen.
\begin{enumerate}
\item $V$ is an elliptic ruled surface, and $D=E_1+E_2$ is a sum of disjoint elliptic curves, which are rulings on $V$.
\item $V$ is an elliptic ruled surface, and $D=E$ is a single elliptic curve, which is a 2-ruling on $V$,
\item $V$ is rational, and $D=E$ is an elliptic curve.
\item $V$ is rational, and $D=\sum_{i=1}^d C_i$ is a cycle of rational curves on $V$, i.e. either $d=2$ and $C_1\cdot C_2=2$, or $d>2$ and $C_1\cdot C_2 = C_2\cdot C_3 =\ldots = C_d \cdot  C_1=1$, with all other intersection numbers $0$.
\end{enumerate}
\end{lemma}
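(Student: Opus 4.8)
The plan is to run the standard adjunction/classification dichotomy on the surface $V$, exploiting the hypothesis $-K_V = D \geq 0$ with $D$ a nonzero simple normal crossings divisor. First I would show that $V$ is either rational or elliptic ruled. Indeed, since $-K_V$ is effective and nonzero, $V$ cannot be of general type, and $\kappa(V) \leq 0$; in fact $K_V \cdot H < 0$ for an ample $H$ (as $-K_V = D$ is a nonzero effective divisor meeting $H$ positively), so $V$ is uniruled, hence either rational or ruled over a curve $C$ of genus $g \geq 1$. Running the canonical bundle formula for a ruled surface $f : V \to C$, one has $K_V = -2\Sigma + f^*(\text{something})$ for a section $\Sigma$, and pairing $-K_V = D$ against a fibre $F$ gives $D \cdot F = 2$. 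So the components of $D$ are fibre-components or sections/multisections summing to degree $2$ over $C$. When $g \geq 2$, an explicit check on $K_V^2$, or the observation that $D$ would have to contain a curve of genus $g \geq 2$ forcing $p_a$ contradictions via adjunction on that component, rules this out; so in the ruled (non-rational) case $g = 1$, i.e. $V$ is elliptic ruled.

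Next I would analyze each case via adjunction on the components. For any smooth component $C_i \subseteq D$, the adjunction formula on $V$ together with $K_V + D = 0$ gives
\[
2 g(C_i) - 2 = (K_V + C_i)\cdot C_i = (C_i - D)\cdot C_i = -\sum_{j \neq i} C_i \cdot C_j \leq 0,
\]
so each $C_i$ is rational or elliptic, and if some $C_i$ is elliptic then it is disjoint from every other component of $D$. Combined with $D \cdot F = 2$ on a ruled surface this immediately yields cases (1) and (2): if $D$ has an elliptic component $E$, it is disjoint from the rest, and since $E \cdot F$ divides into the total $2$, either $E$ is a $2$-ruling and $D = E$ alone (case (2)), or there is a second disjoint elliptic curve $E_2$, each a ruling, $D = E_1 + E_2$ (case (1)); in the latter case any further components would have to be fibre components, but then adjunction on them forces $K_V \cdot F < 0$-type contradictions, so there are none. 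If instead $V$ is elliptic ruled with no elliptic component in $D$, one shows $D$ cannot consist purely of rational curves on an elliptic ruled surface while satisfying $-K_V = D$ (the class $-K_V$ has nonzero image in $\mathrm{Pic}(C) = \mathrm{Pic}^0$-part, which a sum of rational curves cannot produce), so this subcase is empty.

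In the rational case, again by the adjunction computation above, either $D$ has a (unique, isolated) elliptic component $E$ with $\sum_{j\neq i} E\cdot C_j = 0$, forcing $D = E$ — this is case (3) — or every component $C_i$ is rational, and then the identity $\sum_{j \neq i} C_i \cdot C_j = 2$ for every $i$ (with all intersections transverse since $D$ is SNC) says precisely that the dual graph of $D$ is $2$-regular and connected, i.e. a cycle: either $d = 2$ with $C_1 \cdot C_2 = 2$, or $d \geq 3$ forming an $d$-gon $C_1 \cdot C_2 = \cdots = C_d \cdot C_1 = 1$ with all other intersections zero. Connectedness of $D$ here follows because $V$ is rational so $h^1(\mathcal{O}_V) = 0$ and the exact sequence $0 \to \mathcal{O}_V(K_V) \to \mathcal{O}_V \to \mathcal{O}_D \to 0$ gives $h^0(\mathcal{O}_D) = 1$ (using $h^0(K_V) = h^1(K_V) = 0$ by Serre duality and $-K_V = D$ effective nonzero), i.e. $D$ is connected. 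This yields case (4) and exhausts all possibilities.

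The main obstacle I anticipate is the bookkeeping in the ruled case: carefully controlling which curves can appear in $D$ (fibre components versus horizontal components, and their multiplicities) so as to rule out stray extra components and to pin down the intersection pattern — essentially reproving the relevant parts of Lemma 3.3.7 of \cite{Per77} with algebraic adjunction arguments valid in characteristic $p$ rather than Hodge-theoretic ones. The genus $g \geq 2$ exclusion and the "no purely-rational $D$ on an elliptic ruled surface" step both require a small Picard-group/numerical argument rather than a one-line adjunction, and getting these airtight is where the real work lies; everything else is the adjunction identity above plus elementary graph combinatorics.
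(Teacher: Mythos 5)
Your overall strategy is viable and reaches the correct case list, but it takes a genuinely different route from the paper in its second half. The paper (following Persson) performs the same adjunction computation you do --- each $C_i$ is either elliptic and disjoint from the rest of $D$, or rational and meeting the rest of $D$ twice --- but then contracts $V$ to a minimal model $V_0$ (either $\P^2$ or a $\P^1$-bundle), classifies the image configuration $D_0$ there using $D_0\cdot F=2$, and lifts the description back to $(V,D)$ by tracking how exceptional curves can meet $D$. You instead stay on $V$, pair $D$ against a general fibre of the ruling directly, and in the rational case replace the minimal-model detour by the connectedness argument $h^0(\mathcal{O}_D)=1$ (from $p_g=q=0$ and Serre duality) plus $2$-regularity of the dual graph; that part of your argument is clean, valid in characteristic $p$, and avoids the nodal rational curves that the paper has to allow for after contraction. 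Your exclusion of base genus $\geq 2$ is also fine once phrased as: $D\cdot F=2>0$ forces a horizontal component, which dominates the base and so has genus $\geq 2$, contradicting the adjunction dichotomy.

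However, two of your justifications in the elliptic ruled case do not work as stated, although the claims are true and easily repaired. First, the exclusion of an all-rational $D$: there is no natural homomorphism sending $-K_V$ to a class in $\mathrm{Pic}^0(C)$ for which ``a sum of rational curves cannot produce a nonzero image'' is meaningful, so that sketch should be discarded. The correct point is simply that $D\cdot F=2$ forces a horizontal component, a horizontal component dominates the elliptic base, and a rational curve cannot dominate a curve of genus $1$; hence some component of $D$ is elliptic (the same fact also rules out the unaddressed possibility $E\cdot F=0$, i.e.\ a vertical elliptic component, since every fibre of $V\to C$ --- minimal or not, being a total transform of a $\P^1$-bundle fibre under point blow-ups --- is a tree of smooth rational curves). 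Second, your dismissal of extra vertical components once the horizontal part is $E$ (a $2$-section) or $E_1+E_2$ (two sections) via ``adjunction forces $K_V\cdot F<0$-type contradictions'' is not an argument: adjunction only says a vertical rational component of $D$ must meet the rest of $D$ in two points, i.e.\ lie on a cycle of rational curves inside $D$. Since the elliptic components are disjoint from everything else, such a cycle would be entirely vertical, hence contained in a single fibre; but fibres are trees of rational curves, so no cycle exists, and note that the connectedness bound $h^0(\mathcal{O}_D)\leq 1+q=2$ alone does not suffice here (it still permits $D=E+\text{cycle}$). With these two repairs your proof goes through.
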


\begin{proof} The point is that since the classification of surfaces is essentially the same in characteristic $p$ as characteristic $0$, Persson's original proof carries over verbatim. We reproduce it here for the reader's benefit.

The hypotheses imply that $V$ is of Kodaira dimension $-\infty$, and hence is either rational or ruled. For each curve $C_i$, let $T_{C_i}$ denote the number of double points on $C_i$, that is $\sum_{j\neq i} C_i\cdot C_j$. By the genus formula we have
\[ 2g(C_i)-2 = C_i\cdot (C_i+K_V ) = -T_{C_i} \]
(here $K_V$ is the canonical divisor) and hence either $T_{C_i}=0$ and $g(C_i)=1$ or $T_{C_i}=2$ and $g(C_i)=0$. Hence $D$ is a disjoint sum of elliptic curves and cycles of rational curves.

Let $\pi:V\rightarrow V_0$ be a map onto a minimal model. For any $i$ such that $\pi$ does not contract $C_i$, let $C_{0i}=\pi(C_i)$, and let $D_0:=\pi(D)$. Any exceptional curve $E$ has to either be a component of a rational cycle or meet exactly one component of $D$ in exactly one point (because $D\cdot E=-K_V\cdot E = 1$). It then follows that $D_0$ has the same form as $D$ (i.e. is a disjoint union of elliptic curves and cycles of rational curves) except that it might also contain nodal rational curves, not meeting any other components. If $V_0\cong \P^2$, then the only possibilities for $D_0$ are a triangle of lines, a conic plus a line, a single elliptic curve or a nodal cubic. Therefore $(V,D)$ has the form claimed. 

Otherwise, $V_0$ is a $\P^1$ bundle over a smooth projective curve, let $F \subset V_0$ be a fibre intersecting all $C_{0i}$ properly. Applying the genus formula again gives $K_{V_0}\cdot F=-2$, hence $D_0\cdot F=2=\sum_i C_{0i} \cdot F$. Each connected component of $D_0$ is either a rational cycle, a nodal rational curve or an elliptic curve, and the first two kinds of components have to intersect $F$ with multiplicity $\geq 2$ (in the second case this is because it cannot be either a fibre or a degree 1 cover of the base). Hence if some $C_{0i}$ is an elliptic curve $E_1$, then either $E_1\cdot F=2$, in which case $D_0=E_1$, or $E_1\cdot F=1$, in which case we must have $D_0=E_1+E_2$ for some other elliptic curve $E_2$. In the first case $V_0$ can be elliptic ruled, in which case $E_1$ is a 2-ruling, or rational. In the second case $V_0$ must be elliptic ruled, and both $E_1$ and $E_2$ are rulings. Otherwise, each $C_{0i}$ is a rational curve, $V_0$ must be rational and $D_0$ is either a single cycle of smooth rational curves or a single nodal rational curve. Again, this implies that $(V,D)$ has the form claimed.
\end{proof}

We will also need the following cohomological computation.

\begin{lemma} \label{betti} \begin{enumerate} 
\item Let $V$ be an elliptic ruled surface over $k$, and let $\ell$ be a prime number $\neq p$. Then $\dim_{\Q_\ell}H^1_\et(V_{\overline{k}},\Q_\ell)=\dim_K H^1_\rig(V/K)=2$. 
\item Let $V$ be a rational surface over $k$, and let $\ell$ be a prime number $\neq p$. Then $\dim_{\Q_\ell}H^1_\et(V_{\overline{k}},\Q_\ell)=\dim_K H^1_\rig(V/K)=0$.
\end{enumerate} 
\end{lemma}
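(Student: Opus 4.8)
The plan is to reduce both the $\ell$-adic and the rigid computation to the case of a product, exploiting that $\dim H^1$, in either cohomology theory, is a birational invariant of smooth projective surfaces over $k$. First I would record the two ingredients for this invariance. On the one hand, for a smooth proper $k$-scheme, blowing up a closed point leaves $H^1$ unchanged in both theories: by the blow-up formula for $\ell$-adic \'etale cohomology, respectively by Berthelot's Gysin/excision sequences for rigid cohomology (or via the comparison $H^i_\rig(-/K)\cong H^i_\cris(-/W)\otimes_W K$ for smooth proper varieties together with the crystalline blow-up formula), blowing up a smooth $0$-dimensional centre only affects cohomology in degrees $\geq 2$. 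On the other hand, any birational map between smooth projective surfaces factors as a chain of blow-ups and blow-downs at closed points (classical, and valid in every characteristic); and since $\dim_K H^1_\rig(V/K)$ is unaffected by extending the residue field, we may freely pass to $\overline{k}$. Hence $\dim H^1_\et(V_{\overline k},\Q_\ell)$ and $\dim_K H^1_\rig(V/K)$ depend only on the birational class of $V$.

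Next I would pick a convenient model in each case. A rational surface is birational to $\P^2$. An elliptic ruled surface $V$, by the definition adopted in the paper, carries a morphism $V\to C$ to a smooth projective curve $C$ of genus $1$ whose generic fibre is $\P^1_{k(C)}$, so that $k(V)\cong k(C)(t)$ and $V$ is birational to $C\times_k\P^1$. It then remains to compute $H^1$ for $\P^2$ and for $C\times_k\P^1$ in both theories. For $\P^2$ one has $H^1=0$ in each theory. For the product, the K\"unneth formula (available for both $\ell$-adic \'etale and rigid cohomology) together with $H^0(\P^1)=K$ and $H^1(\P^1)=0$ gives $H^1(C\times_k\P^1)\cong H^1(C)$; and for a smooth projective genus-one curve $C$ one has $\dim_{\Q_\ell}H^1_\et(C_{\overline k},\Q_\ell)=2$ and $\dim_KH^1_\rig(C/K)=2g=2$ (the latter being the classical curve case of rigid cohomology, or, over $\overline k$, the fact that $H^1_\rig(C/K)=H^1_\cris(C/W)\otimes_W K$ is the Dieudonn\'e module of $\mathrm{Jac}(C)[p^\infty]$, of rank $2$). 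Combining with birational invariance yields $\dim H^1_\et(V_{\overline k},\Q_\ell)=\dim_K H^1_\rig(V/K)$, equal to $0$ when $V$ is rational and to $2$ when $V$ is elliptic ruled, which is the assertion.

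The only substantive points, and hence the parts I would be most careful about, are the two "classical" inputs in positive characteristic: the invariance of $H^1_\rig$ under a blow-up of a closed point, and the factorisation of birational maps of smooth projective surfaces into blow-ups and blow-downs. Both hold in all characteristics — the first from Berthelot's theory (or from crystalline cohomology via the comparison theorem), the second from the work of Zariski and Abhyankar on surfaces — so neither is a genuine obstacle, but together they carry the real content of the lemma; the rest (K\"unneth, the cohomology of $\P^1$ and $\P^2$, the Betti number of a genus-one curve) is formal and standard.
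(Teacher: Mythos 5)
Your proposal is correct and follows essentially the same route as the paper: both establish that $\dim H^1$ (in $\ell$-adic and rigid cohomology alike) is invariant under blow-ups of points, hence a birational invariant of smooth projective surfaces, and then reduce to a product model ($C\times\P^1$ with $C$ of genus one, respectively $\P^2$ or $\P^1\times\P^1$) where the K\"unneth formula gives the answer. The paper's proof is merely a terser version of the same argument, so there is nothing to add.
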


\begin{proof} One may use the excision exact sequence in either rigid or $\ell$-adic \'etale cohomology to see that the first Betti number of a smooth projective surface is unchanged under monoidal transformations, and is hence a birational invariant. We may therefore reduce to the case of $E\times\P^1$ or $\P^1\times\P^1$, which follows from the K\"unneth formula.
\end{proof}

Finally, we have the following (well known) result.

\begin{lemma} \label{gcggcs} Let $\mathcal{X}/R$ be proper and flat. Assume that the generic fibre $X$ is geometrically connected. Then so is the special fibre $Y$.
\end{lemma}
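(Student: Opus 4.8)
The idea is to reduce to the statement that the total space $\mathcal{X}$ is connected, and then to transfer connectedness down to the special fibre using completeness of $R$. As a first step I would reduce to the case where $k$ is algebraically closed: geometric connectedness of $Y$ (resp.\ of $X$) is unchanged by enlarging $k$ (resp.\ $F$), and the formation of the special fibre commutes with base change, so we may replace $R$ by the completion of its strict henselisation. This is again a complete discrete valuation ring, now with algebraically closed residue field; it keeps $\mathcal{X}$ proper and flat over the base and keeps the generic fibre geometrically connected, while replacing the special fibre by $Y\times_k\overline{k}$. So it suffices to prove that $Y$ is connected under the extra assumption $k=\overline{k}$.

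Next I would check that $\mathcal{X}$ itself is connected. Suppose not, say $\mathcal{X}=\mathcal{X}_1\sqcup\mathcal{X}_2$ with both parts open, closed and non-empty. Because $\mathcal{X}$ is flat over the discrete valuation ring $R$, its structure sheaf is $\pi$-torsion free, so no irreducible component of $\mathcal{X}$ is contained in the special fibre; as each $\mathcal{X}_i$ is a non-empty union of irreducible components, each $\mathcal{X}_i$ meets the generic fibre, and $X=\mathcal{X}_{1,F}\sqcup\mathcal{X}_{2,F}$ is then a disjoint union of two non-empty open subschemes. Thus $X$, and a fortiori $X_{\overline{F}}$, is disconnected, contradicting geometric connectedness. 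So $\mathcal{X}$ is connected.

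Finally I would transfer this to $Y$. Let $f\colon\mathcal{X}\to\spec{R}$ and $R'=f_*\mathcal{O}_{\mathcal{X}}$, which is a finite $R$-algebra by properness. Passing to the Stein factorisation $\mathcal{X}\to\spec{R'}\to\spec{R}$, Zariski's connectedness theorem shows that the fibres of $\mathcal{X}\to\spec{R'}$ are connected. Since $\mathcal{X}$ is connected, $R'$ has no nontrivial idempotents, and since $R$ is complete, hence Henselian, the finite $R$-algebra $R'$ is then local. Its fibre $\spec{R'/\pi R'}$ over the closed point of $\spec{R}$ is therefore a single point, so $Y=f^{-1}(\mathfrak{m}_R)$ coincides with the fibre of $\mathcal{X}\to\spec{R'}$ over the closed point of $\spec{R'}$, which is connected. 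This proves the lemma.

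The step that needs genuine care is the last one: a closed subscheme of a connected scheme need not be connected, so one cannot pass from ``$\mathcal{X}$ connected'' to ``$Y$ connected'' directly, and the role of the completeness hypothesis is precisely to force $f_*\mathcal{O}_{\mathcal{X}}$ to be local once $\mathcal{X}$ is known to be connected. (Alternatively, the whole statement follows at once from the lower semicontinuity, for a proper flat morphism, of the number of geometric connected components of the fibres, applied over the two-point base $\spec{R}$; but the hands-on argument above is equally short.)
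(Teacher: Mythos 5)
Your argument is correct, and it runs on the same engine as the paper's proof --- Zariski's connectedness theorem via the Stein factorisation of the proper morphism $\mathcal{X}\to\spec{R}$ --- but the bookkeeping around it is genuinely different. The paper proves directly that $R\to H^0(\mathcal{X},\mathcal{O}_{\mathcal{X}})$ is an isomorphism (torsion-free and finitely generated over the DVR, hence free, of rank one because of the generic fibre), observes that this persists after any finite flat base change, and then cites the Stein factorisation result to get geometric connectedness of $Y$ in one stroke. You instead base change to the completed strict henselisation to make the residue field algebraically closed, prove connectedness of the total space from flatness (no component sits in the special fibre), and use henselianness of the complete base to force the Stein algebra $R'=f_*\mathcal{O}_{\mathcal{X}}$ to be local, so that $Y$ is literally one of the connected Stein fibres. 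What your route buys is that you never need $h^0(X,\mathcal{O}_X)=1$: the paper's ``geometrically connected implies rank one'' step implicitly uses that $X$ is also geometrically reduced (true in all of the paper's applications, where $X$ is smooth), whereas your argument proves the lemma exactly as stated for any proper flat $\mathcal{X}/R$ with geometrically connected generic fibre. What the paper's route buys is brevity and the slightly stronger intermediate statement $H^0(\mathcal{X},\mathcal{O}_{\mathcal{X}})=R$. One caveat: in this paper $\mathcal{X}$ is in general an algebraic space, not a scheme, so you should phrase the clopen decomposition and Stein factorisation steps for algebraic spaces; this is harmless, since irreducible components, flatness over $R$, and Stein factorisation (the very Stacks Project result the paper cites) are all available in that setting, and the ring-theoretic part of your argument concerns only $R'$.
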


\begin{proof} Since $\mathcal{X}$ is proper and flat over $R$, the zeroth cohomology $H^0(\mathcal{X},\mathcal{O}_{\mathcal{X}})$ is torsion free and finitely generated over $R$, hence it is free. Since the generic fibre is geometrically connected, it is of rank 1, and the natural map $R\rightarrow H^0(\mathcal{X},\mathcal{O}_{\mathcal{X}})$ is an isomorphism. Since this also holds after any finite flat base change $R\rightarrow R'$, it follows from Zariski's Main Theorem \cite[\href{http://stacks.math.columbia.edu/tag/0A1C}{Tag 0A1C}]{stacks} that $Y$ must in fact be geometrically connected.
\end{proof}

\section{Minimal models, logarithmic surfaces and combinatorial reduction}

The purpose of this section is to introduce the notion of a minimal model of a surface of Kodaira dimension $0$, as well as the corresponding logarithmic and combinatorial versions of these surfaces. The basic idea in all cases is that we have
\[  \text{minimal}\Rightarrow \text{logarithmic}\Rightarrow \text{combinatorial} \]
and although the general form that the picture takes is the same in all 4 cases, there are enough differences to merit describing how it works separately in each case. This unfortunately means that the next few sections are somewhat repetitive.

Let $X/F$ be a smooth, projective, geometrically connected minimal surface of Kodaira dimension 0, and denote the canonical sheaf by $\omega_X$. Then $X$ falls into one of the following four cases.
\begin{enumerate} \item $\omega_X\cong \mathcal{O}_X$ and $h^1(X,\mathcal{O}_X)=0$. Then $X$ is a K3 surface.
\item $h^0(X,\omega_X)=0$ and $h^1(X,\mathcal{O}_X)=0$. Then $X$ is an Enriques surface.
\item $\omega_X\cong \mathcal{O}_X$ and $h^1(X,\mathcal{O}_X)=2$. Then $X$ is an abelian surface.
\item $h^0(X,\omega_X)=0$ and $h^1(X,\mathcal{O}_X)=1$. Then $X$ is a bielliptic surface.
\end{enumerate}

Note that if $X$ is an Enriques surface we have $\omega_X^{\otimes 2}\cong \mathcal{O}_X$ and if $X$ is a bielliptic surface we have $\omega_X^{\otimes m}\cong\mathcal{O}_X$ for $m=2,3,4$ or $6$. Also note that since $p>3$ the classification of such surfaces is the same over $k$ as over $F$ (i.e. we do not have to consider the `extraordinary' Enriques or bielliptic surfaces). In all cases we may therefore define an integer $m$ as the smallest positive integer such that $\omega_X^{\otimes m}\cong \mathcal{O}_X$. If $\mathscr{X}/R$ is a semistable model for $X$ then we will let $\mathscr{X}^{\log}$ denote the log scheme with log structure induced by the special fibre, this is log smooth over $R^{\log}$, where the log structure is again induced by the special fibre $\pi=0$. We will let $\omega_\mathscr{X}=\Lambda^2_{\mathcal{X}^{\log}/R^{\log}}$ denote the line bundle of logarithmic 2-forms on $\mathscr{X}$. We will also let $Y$ denote the special fibre, and $Y^{\log}/k^{\log}$ the smooth log scheme whose log structure is the one pulled back from that on $\mathscr{X}$.

\begin{definition} \label{minmod} Let $\mathscr{X}/R$ be a semistable model for $X$. Then we say that $\mathscr{X}$ is minimal if it is strictly semistable and $\omega_{\mathscr{X}}^{\otimes m}\cong \mathcal{O}_{\mathscr{X}}$.
\end{definition}

\begin{warningu} When $X$ is an Enriques surface, there are counter-examples to the existence of such minimal models, even allowing for finite extensions of $R$.
\end{warningu}

The first stage is in passing from minimal models to logarithmic surfaces of Kodaira dimension $0$, the latter being defined by logarithmic analogues of the above criteria.

\begin{definition} \label{logs} Let $Y^{\log}/k^{\log}$ be a proper SNCL scheme over $k$, of dimension $2$, and let $\omega_Y=\Lambda^2_{Y^{\log}/k^{\log}}$ be its canonical sheaf. Then we say that $Y^{\log}$ is a:
\begin{enumerate} \item logarithmic K3 surface if $\omega_Y\cong \mathcal{O}_Y$ and $h^1(Y,\mathcal{O}_Y)=0$;
\item logarithmic Enriques surface if $\omega_Y$ is torsion in $\mathrm{Pic}(Y)$, $h^0(Y,\omega_Y)=0$ and $h^1(Y,\mathcal{O}_Y)=0$;
\item logarithmic abelian surface if $\omega_Y\cong \mathcal{O}_Y$ and $h^1(Y,\mathcal{O}_Y)=2$;
\item logarithmic bielliptic surface if $\omega_Y$ is torsion in $\mathrm{Pic}(Y)$, $h^0(Y,\omega_Y)=0$ and $h^1(Y,\mathcal{O}_Y)=1$;
\end{enumerate}
\end{definition}

\begin{proposition} Let $X/F$ be a minimal surface of Kodaira dimension $0$, and $\mathscr{X}/R$ a minimal model. Then $Y^{\log}$ is a logarithmic K3 (resp. Enriques, abelain, bielliptic) surface if $X$ is K3 (resp. Enriques, abelian, bielliptic).
\end{proposition}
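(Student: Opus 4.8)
The plan is to transfer each defining numerical condition of Definition~\ref{logs} from the generic fibre $X$ to the special fibre $Y$, using only that $\mathscr{X}/R$ is strictly semistable and that $\omega_{\mathscr{X}}^{\otimes m}\cong\cur{O}_{\mathscr{X}}$. First I would record the easy structural facts: strict semistability says precisely that $Y$ is a normal crossings variety with smooth components equipped with a log structure of semistable type, so $Y^{\log}$ is a proper SNCL scheme over $k$, of dimension $2$ because $X$ is a surface; since $X$ is geometrically connected, Lemma~\ref{gcggcs} makes $Y$ geometrically connected, and as its components are smooth over $k$ it is geometrically reduced, whence $h^0(Y,\cur{O}_Y)=1$. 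Next, because $Y^{\log}=\mathscr{X}^{\log}\times_{R^{\log}}k^{\log}$, base change for logarithmic differentials gives $\omega_Y\cong\omega_{\mathscr{X}}|_Y$ (and likewise $\omega_{\mathscr{X}}|_X\cong\omega_X$); in particular $\omega_Y^{\otimes m}\cong\cur{O}_Y$, so $\omega_Y$ is torsion in $\mathrm{Pic}(Y)$, and $\omega_Y\cong\cur{O}_Y$ whenever $m=1$, i.e.\ in the K3 and abelian cases.

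The one genuinely non-formal point is the vanishing $h^0(Y,\omega_Y)=0$ in the Enriques and bielliptic cases. Since $Y$ is reduced and connected with $h^0(Y,\cur{O}_Y)=1$, any nonzero global section of an $m$-torsion line bundle $L$ would be nowhere vanishing (its $m$-th power lies in $k^\times$), forcing $L\cong\cur{O}_Y$; so it suffices to show $\omega_{\mathscr{X}}|_Y\not\cong\cur{O}_Y$. I would argue by contradiction: if $\omega_{\mathscr{X}}|_Y\cong\cur{O}_Y$, then running up the infinitesimal thickenings $Y_n=\mathscr{X}\times_RR/\mathfrak{m}^{n+1}$ and using the exact sequences $1\to\cur{O}_Y\otimes_k\mathfrak{m}^{n}/\mathfrak{m}^{n+1}\to\cur{O}_{Y_{n+1}}^\times\to\cur{O}_{Y_{n}}^\times\to1$, the kernel of each $\mathrm{Pic}(Y_{n+1})\to\mathrm{Pic}(Y_n)$ is a quotient of $H^1(Y,\cur{O}_Y)\otimes_k\mathfrak{m}^{n}/\mathfrak{m}^{n+1}$ and hence killed by $p$; as $\omega_{\mathscr{X}}$ is killed by $m$ and $\gcd(m,p)=1$ (this is where $p>3$ is used), induction gives $\omega_{\mathscr{X}}|_{Y_n}\cong\cur{O}_{Y_n}$ for all $n$, and formal GAGA for the proper algebraic space $\mathscr{X}/R$ over the complete local ring $R$ then forces $\omega_{\mathscr{X}}\cong\cur{O}_{\mathscr{X}}$, contradicting $\omega_{\mathscr{X}}|_X\cong\omega_X\not\cong\cur{O}_X$. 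I expect this step to be the main obstacle, both because of the arithmetic input $\gcd(m,p)=1$ and because one needs Grothendieck existence for algebraic spaces rather than just schemes.

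Finally I would pin down $h^1(Y,\cur{O}_Y)$ by combining three identities: $\chi(Y,\cur{O}_Y)=\chi(X,\cur{O}_X)$ because Euler characteristics of $\cur{O}$ are locally constant in the flat proper family $\mathscr{X}/R$; $h^2(Y,\cur{O}_Y)=h^0(Y,\omega_Y)$ by duality, since $\omega_Y$ is a dualising sheaf for the (Cohen--Macaulay, equidimensional) scheme $Y$ by Proposition~\ref{dualsheaf}; and $\chi(X,\cur{O}_X)=1-h^1(X,\cur{O}_X)+h^0(X,\omega_X)$ by Serre duality on $X$. Together with $h^0(Y,\cur{O}_Y)=1$ these give $h^1(Y,\cur{O}_Y)=h^0(Y,\omega_Y)+h^1(X,\cur{O}_X)-h^0(X,\omega_X)$. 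Plugging in the known values (for K3 and abelian $h^0(Y,\omega_Y)=1$, for Enriques and bielliptic $h^0(Y,\omega_Y)=0$ by the previous paragraph, and the tabulated $h^1(X,\cur{O}_X),h^0(X,\omega_X)$) yields $h^1(Y,\cur{O}_Y)=0,0,2,1$ in the K3, Enriques, abelian and bielliptic cases respectively. Comparing with Definition~\ref{logs} — noting that $\omega_Y\cong\cur{O}_Y$ for K3 and abelian and $\omega_Y$ torsion with $h^0(Y,\omega_Y)=0$ for Enriques and bielliptic — completes the argument in all four cases.
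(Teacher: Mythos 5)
Your argument is correct in substance, and the structural parts (SNCL-ness, geometric connectedness via Lemma \ref{gcggcs}, $\omega_Y\cong\omega_{\mathscr{X}}|_Y$ by base change of log differentials, and the computation of $h^1(Y,\mathcal{O}_Y)$ from flatness of $\chi$ plus duality via Proposition \ref{dualsheaf}) coincide with the paper's treatment of the K3 and abelian cases. Where you genuinely diverge is the Enriques/bielliptic step. The paper never proves $h^0(Y,\omega_Y)=0$ "intrinsically": it passes to the canonical $\mu_m$-cover $\widetilde{\mathscr{X}}\to\mathscr{X}$ defined by the torsion class $\omega_{\mathscr{X}}$, observes that this is a minimal model of a K3 (resp.\ abelian) surface so that $\widetilde{Y}^{\log}$ falls under the already-settled cases, and then gets $h^1(Y,\mathcal{O}_Y)$ from the direct summand $\mathcal{O}_Y\subset\pi_*\mathcal{O}_{\widetilde{Y}}$ (this is where $p\nmid m$ enters), deducing $h^0(Y,\omega_Y)=h^2(Y,\mathcal{O}_Y)=0$ afterwards from $\chi$. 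You instead prove $\omega_{\mathscr{X}}|_Y\not\cong\mathcal{O}_Y$ directly by deforming along the thickenings $Y_n$, using that the kernel of $\mathrm{Pic}(Y_{n+1})\to\mathrm{Pic}(Y_n)$ is $p$-torsion while $\omega_{\mathscr{X}}$ is $m$-torsion with $\gcd(m,p)=1$, and then invoking Grothendieck existence for the proper algebraic space $\mathscr{X}/R$ to contradict $\omega_X\not\cong\mathcal{O}_X$; your reduction of $h^0(Y,\omega_Y)=0$ to non-triviality of the torsion bundle (via $s^{\otimes m}\in k^\times$ on the reduced connected $Y$) is fine. Your route avoids having to check that the canonical cover of a minimal model is again a minimal (log smooth, strictly semistable) model, which the paper asserts rather briskly, at the price of formal-geometric input. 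One point you should make explicit to close the argument: triviality of $\omega_{\mathscr{X}}|_{Y_n}$ for every $n$ separately does not yet give triviality on the formal completion — you need to choose the trivializations compatibly, i.e.\ control $\varprojlim^1 H^0(Y_n,\mathcal{O}_{Y_n}^\times)$ (a Mittag--Leffler argument using that the image subrings of $H^0(Y_{n+k},\mathcal{O}_{Y_{n+k}})\to H^0(Y_n,\mathcal{O}_{Y_n})$ stabilize, these being finite-length $R$-modules), before full faithfulness in Grothendieck existence (which indeed holds for proper algebraic spaces over the complete local $R$) yields $\omega_{\mathscr{X}}\cong\mathcal{O}_{\mathscr{X}}$. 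With that detail supplied, your proof is a valid alternative to the paper's.
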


\begin{proof} Note that the only obstruction to $Y^{\log}/k^{\log}$ being an SNCL variety is geometric connectedness, which follows from Lemma \ref{gcggcs}. The conditions on the canonical sheaf $\omega_Y$ in Definition \ref{logs} follow from the definition of minimality, it therefore suffices to verify the required dimensions of the coherent cohomology groups on $Y$. We divide into the four cases.

First assume that $X$ is a K3 surface. Then we have $\chi(X,\mathcal{O}_X)=2$, and hence by local constancy of $\chi$ under a flat map (see Chapter III, Theorem 9.9 of \cite{Har77}) we must also have that $\chi(Y,\mathcal{O}_Y)=2$. Since $Y$ is geometrically connected by Lemma \ref{gcggcs}, we have $h^0(Y,\mathcal{O}_Y)=1$, and therefore $h^2(Y,\mathcal{O}_Y)-h^1(Y,\mathcal{O}_Y)=1$. But by Proposition \ref{dualsheaf} we must have $h^2(Y,\mathcal{O}_Y)=h^0(Y,\omega_Y)$, and by definition of minimality we know that $\omega_Y\cong \mathcal{O}_Y$. Hence $h^2(Y,\mathcal{O}_Y)=1$ and therefore $h^1(Y,\mathcal{O}_Y)=0$. Hence $Y^{\log}$ is a logarithmic K3 surface.

Next assume that $X$ is Enriques. Then as above, we have that $h^0(Y,\mathcal{O}_Y)=1$ and hence by local constancy of $\chi$, that $h^1(Y,\mathcal{O}_Y)=h^2(Y,\mathcal{O}_Y)$. Let $\pi:\widetilde{\mathscr{X}}\rightarrow \mathscr{X}$ denote the canonical double cover coming from the 2-torsion element $\omega_\mathscr{X}\in \mathrm{Pic}(\mathscr{X})$, with generic fibre $\widetilde{X}\rightarrow X$ and special fibre $\widetilde{Y}\rightarrow Y$. Then $\widetilde{\mathscr{X}}$ is a minimal model of the K3 surface $\widetilde{X}$, and hence $\widetilde{Y}^{\log}$ is a logarithmic K3 surface. Hence $h^1(\widetilde{Y},\mathcal{O}_{\widetilde{Y}})=0$, and since $\mathcal{O}_Y\subset \pi_*\mathcal{O}_{\widetilde{Y}}$ is a direct summand, we must have $h^1(Y,\mathcal{O}_Y) =0$, and therefore $h^0(Y,\omega_Y)=h^2(Y,\mathcal{O}_Y)=0$. Thus $Y^{\log}$ is a logarithmic Enriques surface.

The case of abelian surfaces is handled entirely similarly to that of K3 surfaces, and the case of bielliptic surfaces is then deduced as Enriques is deduced from K3.
\end{proof}

The next notion is that of combinatorial versions of the above four cases.

\begin{definition} \label{crdk} Let $Y$ be a proper surface over $k$ (not necessarily smooth). We say that $Y$ is a combinatorial K3 surface if, geometrically (i.e. over $\overline{k}$), one of the following situations occurs:
\begin{itemize}
\item (Type I) $Y$ is a smooth K3 surface.
\item (Type II) $Y=Y_1\cup \ldots \cup Y_N$ is a chain with $Y_1,Y_N$ smooth rational surfaces and all other $Y_i$ elliptic ruled surfaces, with each double curve on each `inner' component a ruling. The dual graph of $Y_{\overline{k}}$ is a straight line with endpoints $Y_1$ and $Y_N$.
\item (Type III) $Y$ is a union of smooth rational surfaces, the double curves on each component form a cycle of rational curves, and the dual graph of $Y_{\overline{k}}$ is a triangulation of $S^2$.
\end{itemize}
\end{definition}

\begin{definition} \label{crde} Let $Y$ be a proper surface over $k$ (not necessarily smooth). We say that $Y$ is a combinatorial Enriques surface if, geometrically, one of the following situations occurs:
\begin{itemize}
\item (Type I) $Y$ is a smooth Enriques surface.
\item (Type II) $Y=Y_1\cup \ldots \cup Y_N$ is a chain of surfaces, with $Y_1$ rational and all others elliptic ruled, with each double curve on each `inner' component a ruling and the double curve on $Y_N$ a 2-ruling. The dual graph of $Y_{\overline{k}}$ is a straight line with endpoints $Y_1$ and $Y_N$.
\item (Type III) $Y$ is a union of smooth rational surfaces, the double curves on each component form a cycle of rational curves, and the dual graph of $Y_{\overline{k}}$ is a triangulation of $\P^2(\R)$.
\end{itemize}
\end{definition}

\begin{definition} \label{crda} Let $Y$ be a proper surface over $k$ (not necessarily smooth). We say that $Y$ is a combinatorial abelian surface if, geometrically, one of the following situations occurs:
\begin{itemize}
\item (Type I) $Y$ is a smooth abelian surface.
\item (Type II) $Y=Y_1\cup \ldots \cup Y_N$ is a cycle of elliptic ruled surfaces, with each double curve a ruling. The dual graph of $Y_{\overline{k}}$ is a circle.
\item (Type III) $Y$ is a union of smooth rational surfaces, the double curves on each component form a cycle of rational curves, and the dual graph of $Y_{\overline{k}}$ is a triangulation of the torus $S^1\times S^1$.
\end{itemize}
\end{definition}

\begin{definition} \label{crdb} Let $Y$ be a proper surface over $k$ (not necessarily smooth). We say that $Y$ is a combinatorial bielliptic surface if, geometrically, one of the following situations occurs:
\begin{itemize}
\item (Type I) $Y$ is a smooth bielliptic surface.
\item (Type II) $Y=Y_1\cup \ldots \cup Y_N$ is either a cycle or chain of elliptic ruled surfaces, with each double curve either a ruling (cycles or `inner' components of a chain) or a 2-ruling (`end' components of a chain). The dual graph of $Y_{\overline{k}}$ is either a circle or a line segment.
\item (Type III) $Y$ is a union of smooth rational surfaces, the double curves on each component form a cycle of rational curves, and the dual graph of $Y_{\overline{k}}$ is a triangulation of the Klein bottle.
\end{itemize}
\end{definition}

Of course, in each case logarithmic surfaces will turn out to be combinatorial, this has been proved by Nakkajima for K3 and Enriques surfaces, and we will show it during the course of this article for abelian (Theorem \ref{crass}) and bielliptic (Theorem \ref{hst}) surfaces. 

\section{K3 surfaces}

In this section, we will properly state and prove Theorem \ref{vague} for K3 surfaces. The case when $\mathrm{char}(F)=0$ and $\ell=p$ is due to Hern\'andez-Mada in \cite{HM15}, and Perez Buend\'ia in \cite{PB14} and the case $\ell\neq p$ should be well-known (and at least part of it is implicitly proved in \cite{Kul77}), however, we could not find a reference in the literature so we include a proof here for completeness. We begin with a result of Nakkajima. 

\begin{theorem}[\cite{Nak00}, \S3] Let $Y^{\log}$ be a logarithmic K3 surface over $k$. Then the underlying scheme $Y$ is a combinatorial K3 surface.
\end{theorem}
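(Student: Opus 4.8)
The plan is to pass to $\overline{k}$ (the notion of combinatorial K3 surface is geometric, so nothing is lost) and to argue according to the number $N$ of irreducible components of $Y$. If $N=1$ the log structure on $Y^{\log}$ is simply pulled back from the punctured point $\spec{k}^{\log}$, so $\omega_Y=\Lambda^2_{Y^{\log}/k^{\log}}$ coincides with the ordinary canonical bundle $\Omega^2_{Y/k}$; the hypotheses then say $Y$ is a smooth projective surface with $\Omega^2_{Y/k}\cong\mathcal{O}_Y$ and $h^1(Y,\mathcal{O}_Y)=0$, which by the classification of surfaces in characteristic $>3$ means $Y$ is a (smooth) K3 surface. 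This is Type~I.

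Assume now $N\geq2$. Writing $D_i\subset Y_i$ for the double locus of the component $Y_i$ --- a reduced simple normal crossings divisor whose components, the connected double curves $Y_i\cap Y_j$, are smooth (clear from the local model) --- a Poincar\'e residue computation gives the logarithmic adjunction formula $\omega_Y|_{Y_i}\cong\omega_{Y_i}\otimes\mathcal{O}_{Y_i}(D_i)$. Since $\omega_Y\cong\mathcal{O}_Y$ and $D_i\neq\emptyset$ (as $Y$ is connected), each pair $(Y_i,D_i)$ satisfies the hypotheses of Lemma~\ref{paac}; so $Y_i$ is rational or elliptic ruled, and $D_i$ is either one or two disjoint elliptic curves (a $2$-ruling, resp.\ two rulings), a single elliptic curve on a rational $Y_i$, or a cycle of rational curves on a rational $Y_i$. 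The key dichotomy is that \emph{either every double curve is elliptic or every double curve is rational}: an elliptic component of one $D_i$ forces $(Y_i,D_i)$ into one of the first three cases of Lemma~\ref{paac}, all of whose double loci are purely elliptic, and connectedness of $Y$ propagates this to every component. In the first case no triple point of $Y$ lies on $Y_i$ (distinct components of $D_i$ are disjoint), while in the second case every $Y_i$ is rational and every $D_i$ is a cycle of rational curves.

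The remaining analysis is carried by the spectral sequence $E_1^{s,t}=H^t(Y^{(s)},\mathcal{O}_{Y^{(s)}})\Rightarrow H^{s+t}(Y,\mathcal{O}_Y)$ (the corollary to Lemma~\ref{sslc}), together with $h^0(Y,\mathcal{O}_Y)=1$, $h^1(Y,\mathcal{O}_Y)=0$ and $h^2(Y,\mathcal{O}_Y)=h^0(Y,\omega_Y)=1$ (Serre duality via Proposition~\ref{dualsheaf}). As every $Y_i$ is rational or elliptic ruled the $t=2$ row of $E_1$ vanishes, and the $t=0$ row is the simplicial cochain complex of the dual complex $\Gamma$ (vertices $=$ components, edges $=$ double curves, $2$-simplices $=$ triple points). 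In the rational case: each double curve $\cong\P^1$ carries exactly two triple points of $Y$ (where it meets the two neighbouring curves in the cycle $D_i$ it belongs to), so every edge of $\Gamma$ lies on exactly two $2$-simplices, and the link of a vertex $Y_i$ is the dual graph of the cycle $D_i$, hence a circle, so $\Gamma$ is a closed surface; the whole $E_1$-page is concentrated in $t=0$, so $H^n(Y,\mathcal{O}_Y)=H^n(\Gamma,k)$, whence $\Gamma$ is connected with $H^1(\Gamma,k)=0$ and $H^2(\Gamma,k)=k$, which (as $\mathrm{char}\,k>3$, so $\neq2$) forces $\Gamma\cong S^2$ --- the only other closed surface with $H^1=0$, namely $\P^2(\R)$, has $H^2(\cdot,k)=0$. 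This is Type~III. In the elliptic case there are no triple points, so $\Gamma$ is a graph; $E_\infty^{1,0}=H^1(\Gamma,k)$ is a subquotient of $H^1(Y,\mathcal{O}_Y)=0$, so $\Gamma$ is a tree with every vertex of degree $\leq2$ (as $|D_i|\leq2$), i.e.\ a path $Y_1-\cdots-Y_N$; finally $h^1(Y,\mathcal{O}_Y)=0$ forces $H^1(Y^{(0)},\mathcal{O})\to H^1(Y^{(1)},\mathcal{O})$ to be injective, and then $h^2(Y,\mathcal{O}_Y)=1$ pins the number of elliptic ruled components to $N-2$, which must then be the $N-2$ interior vertices, leaving the two endpoints rational. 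This is Type~II.

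The genuinely delicate points I expect are: (i) the combinatorial endgame --- checking that $\Gamma$ really is a closed surface (not merely a pseudomanifold), identifying it with $S^2$ using $\mathrm{char}\,k\neq2$, and extracting the exact count ``$N-2$ elliptic ruled components'' in the chain case from the spectral sequence; and (ii) the preliminary but essential input that the double curves $Y_i\cap Y_j$ are smooth and that logarithmic adjunction holds on each component, since this is what legitimises the componentwise application of Lemma~\ref{paac}.
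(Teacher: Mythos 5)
Your argument is correct, and it is essentially the approach the paper has in mind: the paper itself only cites Nakkajima for this statement, but its accompanying remark says the coherent-cohomology proof can be given as in Theorem~\ref{crass}, and your write-up is exactly that adaptation (log adjunction on each component, Lemma~\ref{paac}, the spectral sequence of Lemma~\ref{sslc}, and the dual-complex topology, with the K3 numerology $h^0=h^2=1$, $h^1=0$ replacing the abelian one). The only difference is cosmetic: your Type~II endgame counts elliptic ruled components directly from injectivity of $H^1(Y^{(0)},\mathcal{O})\to H^1(Y^{(1)},\mathcal{O})$ rather than via birational invariance of coherent cohomology as in the paper's abelian case, which is a slightly cleaner route to the same conclusion.
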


\begin{remark} A proof of this result given entirely in terms of coherent cohomology can be given as in Theorem \ref{crass} below.
\end{remark}

\begin{corollary} Let $\mathscr{X}/R$ be a minimal semistable model of a K3 surface $X/F$. Then the special fibre $Y$ is a combinatorial K3 surface.
\end{corollary}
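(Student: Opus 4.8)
The plan is to combine the two immediately preceding results: the Proposition asserting that $Y^{\log}$ is a logarithmic K3 surface whenever $\mathscr{X}/R$ is a minimal model of a K3 surface $X/F$, and Nakkajima's theorem (\cite{Nak00}, \S3) that the underlying scheme of a logarithmic K3 surface is a combinatorial K3 surface. In other words, the corollary is the composite of the two implications $\text{minimal}\Rightarrow\text{logarithmic}\Rightarrow\text{combinatorial}$ in the case at hand, and the only real work is to keep track of the base field.

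First I would invoke the Proposition to conclude that, since $X$ is a K3 surface and $\mathscr{X}/R$ is minimal (so in particular strictly semistable, and $\omega_{\mathscr{X}}^{\otimes m}\cong\mathcal{O}_{\mathscr{X}}$ with $m=1$ since $\omega_X\cong\mathcal{O}_X$, hence $\omega_{\mathscr{X}}\cong\mathcal{O}_{\mathscr{X}}$), the log scheme $Y^{\log}/k^{\log}$ is a logarithmic K3 surface in the sense of Definition \ref{logs}: it is a proper SNCL scheme of dimension $2$ with $\omega_Y\cong\mathcal{O}_Y$ and $h^1(Y,\mathcal{O}_Y)=0$, the components of $Y$ being smooth by strict semistability and geometric connectedness coming from Lemma \ref{gcggcs}.

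Since being a combinatorial K3 surface is by definition a condition on $Y_{\overline{k}}$, I would then pass to $\overline{k}$ and check that $Y_{\overline{k}}^{\log}$ is still a logarithmic K3 surface: the SNCL condition is \'etale local and so unaffected by the extension $k\hookrightarrow\overline{k}$ (and strict semistability keeps the components smooth over $\overline{k}$), the isomorphism $\omega_Y\cong\mathcal{O}_Y$ pulls back, and the coherent Betti numbers $h^i(Y,\mathcal{O}_Y)$ are preserved by the flat base change $k\to\overline{k}$. Applying Nakkajima's theorem to $Y_{\overline{k}}^{\log}$ then shows that $Y_{\overline{k}}$, and hence $Y$, is a combinatorial K3 surface. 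There is no genuine obstacle here: all the substance is in the cited Proposition and in \cite{Nak00}, and what remains is the routine verification that the defining conditions of a logarithmic K3 surface are stable under extension of the residue field.
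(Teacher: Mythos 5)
Your proposal is correct and is exactly the argument the paper intends (the corollary is stated without proof precisely because it is the composite of the preceding Proposition and Nakkajima's theorem). The extra base-change verification is harmless but not really needed, since Nakkajima's theorem is already stated for a logarithmic K3 surface over $k$ and the combinatorial condition is by definition a condition on $Y_{\overline{k}}$.
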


For a K3 surface $X/K$, and for all $\ell\neq p$, the second cohomology group $H^2_\et(X_{\overline{F}},{\Q_\ell})$ is a finite dimensional $\Q_\ell$ vector space with a continuous Galois action, which is quasi-unipotent. If $\ell=p$ and $\mathrm{char}(F)=0$ then $H^2_\et(X_{\overline{F}},{\Q_p})$ is a de Rham representation of $G_F$, and if $\mathrm{char}(F)=p$ then $H^2_\rig(X/\rk)$ is a $\pn$-module over $\rk$.

If we therefore let $H^2(X)$ stand for:
\begin{itemize} \item $H^2_\et(X_{\overline{F}},{\Q_\ell})$ if $\ell\neq p$;
\item $H^2_\et(X_{\overline{F}},{\Q_p})$ if $\ell=p$ and $\mathrm{char}(F)=0$;
\item $H^2_\rig(X/\rk)$ if $\ell=p$ and $\mathrm{char}(F)=p$;
\end{itemize}
then in all cases we get a monodromy operator $N$ on $H^2(X)$.

\begin{theorem} \label{k3main} Let $\cur{X}/R$ be a minimal semistable model of a K3 surface $X$, and $Y$ its special fibre, which is a combinatorial K3 surface. Then $Y$ is of Type I,II or III respectively as the nilpotency index of $N$ on $H^2(X)$ is $1$, $2$ or $3$.
\end{theorem}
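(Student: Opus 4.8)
The plan is to compute the monodromy weight spectral sequence converging to $H^2$ of the generic fibre $X$ (in whichever cohomology theory is relevant — $\ell$-adic \'etale for $\ell\neq p$, $p$-adic de Rham/rigid otherwise), read off the monodromy operator $N$ from the $E_1$-page, and match the three combinatorial types against the three possible nilpotency indices. Recall that for a strictly semistable model $\mathscr{X}/R$ with special fibre $Y = Y_1\cup\dots\cup Y_N$, the weight spectral sequence has
\[ E_1^{-r,w+r} = \bigoplus_{s\geq \max(0,-r)} H^{w-2s-r}\bigl(Y^{(2s+r)}\bigr)(-s-r) \Rightarrow H^w(X), \]
and $N$ is induced by the obvious shift $E_1^{-r,w+r}\to E_1^{-r+2,w+r-2}$. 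Since the weight monodromy conjecture is known in dimension $\leq 2$ (either classically for $\ell\neq p$, by de Jong/Mokrane–style arguments for $p$-adic, or via \cite{LP16} in equicharacteristic $p$), the spectral sequence degenerates at $E_2$ and $N^k$ on $\mathrm{gr}^W H^2$ is computed by the $E_2$-page; in particular the nilpotency index of $N$ on $H^2(X)$ equals the largest $k$ for which the iterated map $N^k\colon \mathrm{gr}^W_{2+k}H^2\to\mathrm{gr}^W_{2-k}H^2$ is nonzero. For $w=2$ this gives nilpotency index $\leq 3$, and it is $1$ iff $H^2(Y^{(1)})$-type contributions to the off-diagonal pieces vanish, $\leq 2$ iff the very top/bottom weight pieces vanish, etc.

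Concretely I would proceed type by type. In \textbf{Type I} ($Y$ smooth K3), $\mathscr{X}$ has good reduction (after noting $Y=Y^{(0)}$ is the only stratum), so $N=0$ and the nilpotency index is $1$; conversely, if $N=0$ then $H^2(X)$ is crystalline/unramified and the smooth-K3 structure of $Y$ follows because the combinatorial classification forces it (no other combinatorial type has trivial monodromy, by the next two cases). In \textbf{Type II}, $Y$ is a chain, so $Y^{(0)}$ is a disjoint union of rational and elliptic ruled surfaces, $Y^{(1)}$ is a disjoint union of elliptic curves (the double curves), and $Y^{(2)}=\emptyset$; hence the $E_1$-page has only the two rows $r=0,1$, forcing $N^2=0$, and I must check $N\neq 0$, i.e. that the map $H^0(Y^{(1)})(-1)\to H^2(Y^{(0)})$-part is nonzero — this uses that a chain has at least one double curve and that the dual graph being a segment (not a point) makes the relevant Mayer–Vietoris / Čech differential nontrivial, together with Lemma \ref{betti} to control the Betti numbers of the components. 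In \textbf{Type III}, $Y^{(2)}\neq\emptyset$ (the triple points, parametrised by the $2$-simplices of a triangulation of $S^2$), so the $E_1$-page genuinely has three rows and $N^2\neq 0$; here one computes $\mathrm{gr}^W_0 H^2 \cong \mathrm{gr}^W_4 H^2 \cong H^2$ of the dual complex $\cong H^2(S^2)\neq 0$, giving nilpotency index exactly $3$. The converse directions in each case follow from the dichotomy: the combinatorial classification provides only three mutually exclusive shapes, and we have just pinned a distinct nilpotency index ($1$, $2$, $3$) to each, so the value of the index determines the type.

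For the implications ``$N$ has index $j$ $\Rightarrow$ $Y$ is Type $j$'', I would argue contrapositively: if $Y$ is combinatorial K3 it is one of the three types, each of which we have shown has a specific index, so the indices being distinct makes the assignment a bijection. The one genuinely delicate point — and the step I expect to be the main obstacle — is verifying the \emph{non-vanishing} of the relevant iterated monodromy maps in Types II and III (equivalently, that $\mathrm{gr}^W_4 H^2(X)$, resp. $\mathrm{gr}^W_3 H^2(X)$, is nonzero). For Type III this reduces to showing $H^2$ of the dual complex is $H^2(S^2)=\Q$, which is exactly the hypothesis that the dual graph triangulates $S^2$, so it is really an input; the subtlety is making sure the weight spectral sequence's $E_2^{0,2}$ (resp. $E_2^{-1,3}$) term is computed correctly and survives, which is where the weight monodromy conjecture (degeneration at $E_2$) is essential. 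For Type II the analogous statement is that the connecting map in the Mayer–Vietoris sequence for $Y=Y_1\cup(\text{rest})$ does not kill the class of the double curve in $H^2$ — this follows because $h^1$ of the total dual complex (a segment) vanishes while there is genuine ``glueing cohomology'' in degree $2$, again controlled by Lemma \ref{betti} and the corollary to Lemma \ref{sslc}. I would also remark that an alternative, more uniform route is via the Clemens–Schmid exact sequence alluded to in the introduction (cf. \cite{CT14}), which directly packages $N$, the specialisation map, and the cohomology of $Y$; but the spectral sequence computation is self-contained given the tools already set up.
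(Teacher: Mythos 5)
Your overall strategy is the paper's: the weight spectral sequence compatible with $N$, degeneration at $E_2$, the weight--monodromy conjecture in dimension $\leq 2$ to translate nilpotency index into (non)vanishing of the extreme $E_2$-terms, a type-by-type verification, and the exhaustiveness of the three combinatorial types to get the converse. Type I and Type III are handled as in the paper (for Type III the point is exactly $E_2^{2,0}\cong H^2_{\mathrm{sing}}(\Gamma,\Q_\ell)=H^2(S^2)\neq 0$, and you correctly note that WMC is what converts this into $N^2\neq 0$; the mere presence of triple points, i.e.\ a third row at $E_1$, does not by itself do this).

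There is, however, a genuine gap in your Type II argument, which is precisely the delicate non-vanishing step. You assert that $N\neq 0$ amounts to the non-vanishing of the Gysin component $H^0(Y^{(1)})(-1)\to H^2(Y^{(0)})$, or to the Mayer--Vietoris connecting map ``not killing the class of the double curve in $H^2$''. Those maps live in the $t=2$ row and only influence $\mathrm{gr}^W_2H^2(X)$; they say nothing about $N$, since (given $E_2^{2,0}=0$ in Type II and WMC) $N=0$ on $H^2(X)$ if and only if $\mathrm{gr}^W_1H^2=E_2^{1,1}$ vanishes, equivalently $\mathrm{gr}^W_3=E_2^{-1,3}$ vanishes. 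So the statement you must prove is $E_2^{1,1}=\mathrm{coker}\bigl(H^1_\et(Y^{(0)}_{\overline{k}},\Q_\ell)\to H^1_\et(Y^{(1)}_{\overline{k}},\Q_\ell)\bigr)\neq 0$ (note $E_1^{2,1}=H^1(Y^{(2)})=0$ for a chain), and this is where the elliptic curves enter: by Lemma \ref{betti} a Type II chain has $\dim H^1(Y^{(1)})=2(N-1)$ (the $N-1$ elliptic double curves) and $\dim H^1(Y^{(0)})=2(N-2)$ (two rational end components, $N-2$ elliptic ruled inner ones), so the cokernel has dimension at least $2$; the paper pins it down to exactly $2$ by first using $b_1(X)=0$ for a K3 to force $E_2^{0,1}=0$. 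Your ``glueing cohomology in degree $2$'' heuristic, and the appeal to the corollary of Lemma \ref{sslc} (which concerns coherent cohomology of $\mathcal{O}_Y$ and plays no role in the $\ell$-adic or $p$-adic weight spectral sequence), do not establish this; as written the Type II case, and hence the index-$2$ $\Leftrightarrow$ Type II equivalence on which your final bijection argument rests, is not proved. Replacing that paragraph by the Betti-number count above (or the paper's version of it) repairs the argument.
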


\begin{proof} The case $\ell=p$ and $\mathrm{char}(F)=0$ is due to Hern\'andez Mada, and in fact the case $\ell=\mathrm{char}(F)=p$ also follows from his result by applying the results in Chapter 5 of \cite{LP16}.

To deal with the case $\ell\neq \mathrm{char}(k)$, we use the weight spectral sequence (for algebraic spaces this is Proposition 2.3 of \cite{Kul77}). Let $Y=Y_1\cup \ldots \cup Y_N$ be the components of $Y$, $C_{ij}=Y_i\cap Y_j$ the double curves and
\[ Y^{(0)}=\CMcoprod_i Y_i,\;\;Y^{(1)}=\CMcoprod_{i<j}C_{ij},\;\;Y^{(2)}=\CMcoprod_{i<j<k}Y_i\cap Y_j\cap Y_k.\]
We consider the weight spectral sequence
\[ E_1^{s,t} = \bigoplus_{j\geq \max\{0,-s\}} H^{t-2j}_\et(Y^{(s+2j)}_{\overline{k}},\Q_\ell)(-j)\Rightarrow H^{s+t}_\et(X_{\overline{F}},\Q_\ell) \]
which degenerates at $E_2$ and is compatible with monodromy in the sense that there exists a morphism $N:E_r^{s,t}\rightarrow E_r^{s+2,t-2}$ of spectral sequences abutting to the monodromy operator on $H^{s+t}_\et(X_{\overline{K}},\Q_\ell)$. Moreover, by the weight-monodromy conjecture (see Remark 6.8(1) of \cite{Nak06}) we know that $N^r$ induces an isomorphism $E_2^{-r,w+r}\isomto E_2^{r,w-r}$. Hence we can characterise the three cases where $N$ has nilpotency index $1,2$ or $3$ in terms of the weight spectral sequence as follows.
\begin{enumerate} \item $N=0$ if and only if $E_2^{1,1}=E_2^{2,0}=0$. 
\item $N\neq 0$, $N^2=0$ if and only if $E_2^{1,1}\neq0$ and $E_2^{2,0}=0$.
\item $N^2\neq0$, $N^3=0$ if and only if $E_2^{1,1},E_2^{2,0}\neq 0$.
\end{enumerate}
Hence it suffices to show the following.
\begin{enumerate} \item If $Y$ is of Type I, then $E_2^{1,1}=0$. 
\item If $Y$ is of Type II, then $E_2^{1,1}\neq0$ and $E_2^{2,0}=0$.
\item If $Y$ is of Type III, then $E_2^{2,0}\neq 0$.
\end{enumerate}
The first of these is clear, and in both the Type II and III cases the term
\[E_2^{2,0}= \mathrm{coker}\left( H^0(Y^{(1)}_{\overline{k}},\Q_\ell) \rightarrow H^0(Y^{(2)}_{\overline{k}},\Q_\ell) \right) \]
is simply the second singular cohomology $H^2_\mathrm{sing}(\Gamma,\Q_\ell)$ of the dual graph $\Gamma$. For Type II this is $0$, and for Type III this is $1$-dimensional over $\Q_\ell$, hence it suffices to show that if $Y$ is of Type II, then $E_2^{1,1}\neq0$.

But we know that
\[ \dim_{\Q_\ell} E_2^{-1,2}+\dim_{\Q_\ell} E_2^{0,1}+\dim_{\Q_\ell} E_2^{1,0}=\dim_{\Q_\ell} H^1_\et(X,\Q_\ell)=0  \]
and hence $\dim_{\Q_\ell}E_2^{0,1}=0$. Therefore we have
\[ \dim_{\Q_\ell} E_2^{1,1}= \dim_{\Q_\ell}H^1_\et(Y^{(1)},\Q_\ell)-\dim_{\Q_\ell} H^1_\et(Y^{(0)},\Q_\ell) \]
which using Lemma \ref{betti} we can check to be equal to $2(N-1)-2(N-2)=2$. Hence $E_2^{1,1}\neq0$ as required.
\end{proof}

\section{Enriques surfaces}\label{en}

To deal with the case of Enriques surfaces, we again start with a result of Nakkajima, analogous to the one quoted above. 

\begin{theorem}[\cite{Nak00}, \S7] Let $Y^{\log}$ be a logarithmic Enriques surface over $k$. Then the underlying scheme $Y$ is a combinatorial Enriques surface.
\end{theorem}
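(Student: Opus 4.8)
The plan is to mimic as closely as possible the strategy that works for logarithmic K3 surfaces, using the canonical double cover to reduce to that case, and then extracting the combinatorial structure by hand. Let $Y^{\log}/k^{\log}$ be a logarithmic Enriques surface, so $\omega_Y$ is torsion in $\mathrm{Pic}(Y)$ with $h^0(Y,\omega_Y)=h^1(Y,\mathcal{O}_Y)=0$. First I would produce the canonical connected \'etale double cover $\widetilde{Y}^{\log}\to Y^{\log}$ associated to the $2$-torsion element $\omega_Y$; since the covering is log \'etale it carries a natural SNCL structure, and one checks $\omega_{\widetilde{Y}}\cong\mathcal{O}_{\widetilde{Y}}$ together with $h^1(\widetilde{Y},\mathcal{O}_{\widetilde{Y}})=0$ (the last using that $\mathcal{O}_Y$ is a direct summand of $\pi_*\mathcal{O}_{\widetilde{Y}}$, exactly as in the model-theoretic proposition above). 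Thus $\widetilde{Y}^{\log}$ is a logarithmic K3 surface, hence by Nakkajima's K3 theorem its underlying scheme is a combinatorial K3 surface — Type I, II or III.

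Next I would analyse the free $\Z/2$-action on $\widetilde{Y}$ and descend the combinatorial picture. In the Type I case $\widetilde{Y}$ is a smooth K3 and the quotient $Y$ is a smooth Enriques surface, as wanted. In the Type II/III cases the involution $\iota$ acts on the dual graph $\widetilde{\Gamma}$ of $\widetilde{Y}_{\overline{k}}$, which is a segment (Type II) or a triangulation of $S^2$ (Type III). A key point is that $\iota$ acts \emph{freely} on $\widetilde{Y}$, so it cannot fix any component or double curve; combined with the fact that an involution of a segment with no fixed vertex or edge is impossible, I expect Type II for the cover to force $Y$ itself to have the segment dual graph with the cover being an unramified double cover of graphs inducing the relevant $2$-ruling on the end component — one has to check that the involution identifies the two ends of $\widetilde{\Gamma}$ by a flip, so that $\widetilde{\Gamma}\to\Gamma$ is the standard double cover of a segment with the two end components of $\widetilde{Y}$ mapping $2:1$ onto a single end component of $Y$, converting a ruling upstairs into a $2$-ruling downstairs. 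In the Type III case the free involution of the $S^2$-triangulation must be the antipodal-type map, so the quotient triangulates $\P^2(\R)$, and each rational component downstairs is the free quotient (or isomorphic image) of a rational component upstairs, hence still rational, with double curves still forming cycles of rational curves.

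Alternatively — and this is the route the remark after the K3 theorem hints at, and probably the cleaner one to write — I would give a direct proof purely in coherent cohomology, paralleling the proof of Theorem~\ref{crass} for abelian surfaces: use the spectral sequence of Lemma~\ref{sslc}, $E_1^{s,t}=H^t(Y^{(s)},\mathcal{O}_{Y^{(s)}})\Rightarrow H^{s+t}(Y,\mathcal{O}_Y)$, together with the adjunction/residue sequences relating $\omega_Y$, $\omega_{Y_i}$ and the double curves, to pin down the numerical type of each component $Y_i$ and of the double curves $C_{ij}$. Concretely, restricting $\omega_Y\cong\mathcal{O}_Y$ (after noting it is even trivial, not merely torsion, on a suitable locus) to a component $Y_i$ and applying log-adjunction shows $\omega_{Y_i}+(\text{sum of double curves on }Y_i)=0$ in $\mathrm{Pic}(Y_i)$, which is precisely the hypothesis of Lemma~\ref{paac}; that lemma then forces each $Y_i$ to be rational or elliptic ruled with the double curves an anticanonical cycle or elliptic ruling, and feeding the resulting component data back into the spectral sequence together with $h^1(Y,\mathcal{O}_Y)=0$ constrains the dual graph. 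Combining with the Euler-characteristic bookkeeping ($\chi(Y,\mathcal{O}_Y)=0$ after passing through the double cover, using $\omega_Y^{\otimes2}\cong\mathcal{O}_Y$) then separates the three types and identifies the dual graph as a segment, a triangulation of $\P^2(\R)$, or (degenerate) a single smooth Enriques surface.

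The main obstacle, in either approach, is the topological/combinatorial identification of the dual graph in Type III: showing that a free involution of a triangulated $S^2$ yields exactly a triangulated $\P^2(\R)$ rather than some other quotient, and dually that $h^1(Y,\mathcal{O}_Y)=0$ together with the component classification rules out, say, a graph with extra loops. In the coherent-cohomology route this is where one must carefully combine the $E_1$-terms coming from $H^0$ of the triple points, $H^1$ of the double curves (elliptic curves contribute, rational ones do not), and $H^2$ of the surfaces, and check that the only configuration compatible with $h^1(Y,\mathcal{O}_Y)=0$, $h^0(Y,\omega_Y)=0$ and the adjunction constraints is the claimed one; in the double-cover route it is the transfer-of-structure along $\widetilde{Y}\to Y$, making sure the SNCL structure and the notion of ``$2$-ruling'' descend correctly. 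Everything else — the double cover construction, the direct-summand argument for $h^1$, the application of Lemmas~\ref{paac} and \ref{betti} — is routine given the tools already assembled in the paper.
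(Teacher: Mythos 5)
The paper does not actually prove this statement: it is quoted from Nakkajima \cite{Nak00}, with only the remark that a proof purely in coherent cohomology can be given as in Theorem \ref{hst}. Your cover-and-descend strategy (canonical double cover $\widetilde{Y}^{\log}\to Y^{\log}$, show $\widetilde{Y}^{\log}$ is a logarithmic K3, invoke Nakkajima's K3 theorem, descend the type) is therefore very much in the spirit of the paper's own arguments, namely the proof of Theorem \ref{hst} (bielliptic via the canonical $m$-cover to the abelian case) and the comparison of the types of $Y$ and $\widetilde{Y}$ in the proof of Theorem \ref{cgre}, and the plan is viable. However, as written it has a genuine flaw at the descent step.

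Your key assertion that freeness of the involution $\iota$ on $\widetilde{Y}$ implies it ``cannot fix any component or double curve'' is a non sequitur (freeness forbids fixed \emph{points}, not setwise-stabilised subvarieties), and its conclusion is actually false in Type II: what one can and must prove is that no \emph{rational} component is stabilised (a rational surface admits no fixed-point-free involution, e.g.\ because $\chi(\mathcal{O})=1$ is odd, or because it is simply connected), and that no double curve is stabilised with its two branches swapped (the image component of $Y$ would then be non-normal, contradicting smoothness of the components of the SNCL scheme $Y$ required by Definition \ref{logs}). Granting these, in Type II the involution must flip the chain, the chain upstairs has odd length, and the \emph{middle elliptic ruled component is stabilised}, mapping $2{:}1$ onto the elliptic-ruled end component of $Y$ with its two swapped rulings giving the $2$-ruling of Definition \ref{crde}; your description, in which the two rational ends upstairs map to a single end of $Y$ and the ruling-to-$2$-ruling conversion happens there, conflates the two ends of the quotient chain. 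The corrected dichotomy is also what gives freeness of the induced action on the dual complex in Type III (no stabilised vertex, edge, or $2$-cell), whence the quotient is a triangulation of $\P^2(\R)$. Two smaller points: the direct-summand argument runs the wrong way (it bounds $h^1(Y,\mathcal{O}_Y)$ by $h^1(\widetilde{Y},\mathcal{O}_{\widetilde{Y}})$); to get $h^1(\widetilde{Y},\mathcal{O}_{\widetilde{Y}})=0$ use $\pi_*\mathcal{O}_{\widetilde{Y}}\cong\mathcal{O}_Y\oplus\omega_Y$ together with duality for $\omega_Y$ (Proposition \ref{dualsheaf}), and you should say why the torsion order of $\omega_Y$ in Definition \ref{logs} is $2$. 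Finally, in your alternative coherent-cohomology route, $\omega_Y$ is \emph{not} trivial, so restriction to a component only yields $m(K_{Y_i}+D_i)=0$ in $\mathrm{Pic}(Y_i)$; Lemma \ref{paac} applies directly only after passing to the cover, or on rational components where $\mathrm{Pic}$ is torsion-free, exactly as in the proof of Theorem \ref{hst} (also note $\chi(Y,\mathcal{O}_Y)=1$, not $0$, for a logarithmic Enriques surface).
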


\begin{remark} Again, it is possible to prove this only using coherent cohomology as in Theorem \ref{hst} below.
\end{remark}

\begin{corollary} Let $\mathscr{X}/R$ be a minimal semistable model of an Enriques surface $X/F$. Then the special fibre $Y$ is a combinatorial Enriques surface.
\end{corollary}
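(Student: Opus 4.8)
The plan is to obtain this as a purely formal consequence of the two results quoted immediately above. Since $\mathscr{X}/R$ is by hypothesis a minimal model of the Enriques surface $X$, the preceding Proposition of this section (the implication ``minimal $\Rightarrow$ logarithmic'') applies and tells us that the special log scheme $Y^{\log}/k^{\log}$ is a logarithmic Enriques surface over $k$ in the sense of Definition \ref{logs}; that is, $\omega_Y$ is torsion in $\mathrm{Pic}(Y)$, $h^0(Y,\omega_Y)=0$ and $h^1(Y,\mathcal{O}_Y)=0$.

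It then remains only to invoke Nakkajima's structure theorem ([\cite{Nak00}, \S7]), which asserts that the underlying scheme of any logarithmic Enriques surface over $k$ is a combinatorial Enriques surface in the sense of Definition \ref{crde}. Applying it to $Y^{\log}$ gives exactly the conclusion, so the two-line deduction is complete.

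There is no real obstacle here: all of the substantive work has already been carried out, partly in the proof of the Proposition (geometric connectedness of $Y$ coming from Lemma \ref{gcggcs}, the conditions on $\omega_Y$ from the definition of minimality, and the vanishing of $h^1(Y,\mathcal{O}_Y)$ from passing to the canonical double cover, which is itself a minimal model of a K3 surface), and partly in Nakkajima's classification. If one instead prefers a self-contained argument that avoids citing \cite{Nak00}, one can run the coherent-cohomology analysis directly on $Y$, along the same lines as the proof of Theorem \ref{hst} below; but for the corollary as stated, the combination of the Proposition with Nakkajima's theorem is all that is needed.
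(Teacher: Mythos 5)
Your proposal is correct and matches exactly how the paper intends this corollary to be read: the paper states it without proof precisely because it is the immediate combination of the Proposition of the previous section (minimal model $\Rightarrow$ $Y^{\log}$ is a logarithmic Enriques surface) with Nakkajima's theorem quoted directly above (logarithmic $\Rightarrow$ combinatorial). Nothing further is needed.
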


If $X/F$ is an Enriques surface, then for all $\ell\neq p$ the second homotopy group $\pi_2^\et(X_{\overline{F}})_{\Q_\ell}$ (for the definition of the higher homotopy groups of algebraic varieties, see \cite{AM69}) is a finite dimensional $\Q_\ell$ vector space with a continuous Galois action, which is quasi-unipotent. If $\ell=p$ and $\mathrm{char}(F)=0$ then $\pi_2^\et(X_{\overline{F}})_{\Q_p}$ is a de Rham representation of $G_F$. If $\mathrm{char}(F)=p$ there is (currently!) no general theory of higher homotopy groups, so instead we cheat somewhat and use the known properties of the higher \'etale homotopy groups to justify making the following definition.

\begin{definition} We define $\pi_2^\rig(X/\rk):=H^2_\rig(\widetilde{X}/\rk)^\vee$, where $\widetilde{X}\rightarrow X$ is the canonical double cover of $X$.
\end{definition}

Thus $\pi_2^\rig(X/\rk)$ is a $\pn$-module over $\rk$. Again, if we let $\pi_2(X)$ stand for any of $\pi_2^\et(X_{\overline{F}})_{\Q_\ell}$, $\pi_2^\et(X_{\overline{F}})_{\Q_p}$ or $\pi_2^\rig(X/\rk)$, then in all cases we have a monodromy operator $N$ associated to $\pi_2(X)$.

\begin{theorem} \label{cgre} Let $\cur{X}/R$ be a minimal semistable model of an Enriques surface $X$, and $Y$ its special fibre, which is a combinatorial Enriques surface. Then $Y$ is of Type I,II or III respectively as the nilpotency index of $N$ on $\pi_2(X)$ is $1$, $2$ or $3$.
\end{theorem}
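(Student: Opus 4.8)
The plan is to reduce everything to Theorem \ref{k3main} via the canonical double cover. Recall that the canonical cover $\widetilde X\to X$ of an Enriques surface is a K3 surface (since $p>3$ we are never in the exceptional characteristic-$2$ situation), and that in each of the three cohomology theories in play there is a natural isomorphism $\pi_2(X)\cong H^2(\widetilde X)^\vee$ compatible with monodromy: for $\ell\neq p$ this comes from the Hurewicz theorem in \'etale homotopy together with simple-connectedness of $\widetilde X$; for $\ell=p$ and $\mathrm{char}(F)=0$ from the same statement for the $p$-adic \'etale homotopy type; and for $\mathrm{char}(F)=p$ it is the definition of $\pi_2^\rig(X/\rk)$. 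Since passing to the dual of a $(\varphi,N)$-module over $K$, of an $\ell$-adic Galois representation, or of a $\pn$-module over $\rk$ replaces the monodromy operator by (a twist of) its transpose, and hence preserves its nilpotency index, the nilpotency index of $N$ on $\pi_2(X)$ equals that of $N$ on $H^2(\widetilde X)$.

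Next, the canonical cover $\widetilde X\to X$ extends to a finite \'etale morphism $\pi\colon\widetilde{\mathscr X}\to\mathscr X$, namely the degree-$2$ cyclic cover attached to the $2$-torsion line bundle $\omega_{\mathscr X}\in\mathrm{Pic}(\mathscr X)$ (which is $2$-torsion precisely by minimality, and whose cover is \'etale as $p\neq 2$). As an \'etale cover of a regular, proper, strictly semistable $R$-scheme (or algebraic space with scheme special fibre), $\widetilde{\mathscr X}$ is again regular, proper and strictly semistable with special fibre a scheme, and $\omega_{\widetilde{\mathscr X}}\cong\pi^*\omega_{\mathscr X}\cong\mathcal O_{\widetilde{\mathscr X}}$ by construction of the cyclic cover and log \'etaleness of $\pi$; hence $\widetilde{\mathscr X}$ is a minimal semistable model of the K3 surface $\widetilde X$, so its special fibre $\widetilde Y$ is a combinatorial K3 surface. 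By Theorem \ref{k3main}, $\widetilde Y$ is of Type I, II or III precisely as the nilpotency index of $N$ on $H^2(\widetilde X)$ is $1$, $2$ or $3$. Combining with the previous paragraph, it remains only to prove the purely geometric statement
\[ Y\ \text{is of Type I, II or III}\iff \widetilde Y\ \text{is of Type I, II or III.} \]

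To prove this I would study the connected \'etale double cover $\widetilde Y\to Y$ (connectedness of $\widetilde Y$ follows from Lemma \ref{gcggcs}) together with the free involution $\iota$ on $\widetilde Y$ with $Y=\widetilde Y/\iota$, and match it against the explicit lists in Definitions \ref{crdk} and \ref{crde}. Type I is immediate ($Y$ smooth $\iff$ $\widetilde Y$ smooth). In the remaining cases, using that every component of $Y$ or $\widetilde Y$ other than a smooth Enriques/K3 surface is rational or elliptic ruled, and that an \'etale double cover of a (simply connected) smooth rational surface splits, one sees that the cover is governed by a $\mathbb{Z}/2$-cover of the dual complex: for Type II the K3 fibre must be a chain of even length $2N$ whose involution reverses it and acts on the central (elliptic) double curve by a fixed-point-free translation, with quotient the length-$N$ chain with rational end component, elliptic ruled interior, and $2$-ruling double curve on $Y_N$ as in Definition \ref{crde}; for Type III, $\widetilde Y\to Y$ is pulled back from the orientation double cover $S^2\to\P^2(\R)$ of the dual complex, and a connected double cover of a triangulation of $\P^2(\R)$ is a triangulation of $S^2$ and conversely.

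The main obstacle is exactly this last combinatorial step. One has to rule out an $\iota$-fixed component in a Type II K3 chain — using that a free involution of an elliptic ruled surface cannot restrict compatibly to the rulings coming from its two adjacent double curves — and one has to verify carefully that forming the free quotient turns a ruling double curve swapped with another into a $2$-ruling, and a triangulation of $S^2$ with free involution into a triangulation of $\P^2(\R)$, rather than into a disconnected or otherwise non-combinatorial configuration. This runs parallel to the complex-analytic analysis of degenerations of Enriques surfaces and to Nakkajima's classification, but must be carried out afresh in the present (mixed- or equicharacteristic) generality.
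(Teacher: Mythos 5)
Your first two paragraphs coincide with the paper's argument: pass to the canonical double cover $\widetilde{\cur{X}}\to\cur{X}$, identify $\pi_2(X)$ with $H^2(\widetilde{X})^\vee$ (using simple connectedness of $\widetilde{X}$, resp. the definition of $\pi_2^\rig$), note that dualising does not change the nilpotency index of $N$, and apply Theorem \ref{k3main} to the minimal model $\widetilde{\cur{X}}$ of the K3 surface $\widetilde{X}$, so that everything reduces to showing that $Y$ and $\widetilde{Y}$ have the same type. The gap is in how you finish. You do not use the hypothesis, already contained in the statement (and supplied by Nakkajima's theorem), that $Y$ itself is a \emph{combinatorial Enriques surface}; because of this you feel obliged to reconstruct the quotient $Y=\widetilde{Y}/\iota$ from scratch, and you explicitly leave that reconstruction as ``the main obstacle''. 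The paper avoids this entirely: since the shape of $Y$ is already known to be one of the three Enriques types, one only has to decide which one, and this follows from soft observations about the finite \'etale map $f\colon\widetilde{Y}\to Y$ --- smoothness passes up and down (Type I); if $\widetilde{Y}$ is of Type III all its components are rational, and since a rational surface cannot be a nontrivial \'etale double cover of a rational or elliptic ruled surface, every component of $Y$ is rational, so $Y$ is of Type III; if $\widetilde{Y}$ is of Type II with an elliptic ruled component, then $Y$ has an elliptic ruled component (a rational surface cannot \'etale cover an elliptic ruled one), so $Y$ is of Type II, and the degenerate case of two rational components meeting along an elliptic curve is excluded by the same kind of covering argument. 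As written, your proposal is therefore incomplete precisely at the step that carries the content of the theorem.

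Moreover, the sketch you give of the missing combinatorial step is incorrect in the Type II case. You propose to ``rule out an $\iota$-invariant component'' and assert that the K3 chain must have even length $2N$ with the involution reversing it about a central double curve. In fact the opposite happens: the configuration matching Definition \ref{crde} (end component $Y_N$ elliptic ruled with its single double curve a $2$-ruling) arises from an \emph{odd}-length chain whose central elliptic ruled component is $\iota$-invariant, with $\iota$ acting freely on it and swapping its two boundary rulings; such involutions certainly exist (e.g. $(e,t)\mapsto(e+\epsilon,1/t)$ on $E\times\P^1$ with $\epsilon$ a nonzero $2$-torsion point), so your proposed exclusion argument fails. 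Conversely, in your even-length picture the two middle components are swapped while meeting along an $\iota$-invariant curve, so their common image is a component of $Y$ that is non-normal along the image of that curve; this contradicts strict semistability of $\cur{X}$ (and Definition \ref{crde}), so the even case is the one to be excluded, not the invariant-component case. Pursuing your sketch as stated would thus lead to the wrong classification of quotients; either correct it along these lines or, better, invoke the known combinatorial structure of $Y$ as the paper does.
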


\begin{remark} \begin{enumerate} \item As noted in the introduction, a result very similar to this was proved in \cite{HM15}.
\item The result as stated here is slightly different to Theorem \ref{vague}. There are in fact two ways of stating it, one using the second homotopy group $\pi_2$ and one using the cohomology of a certain rank 2 local system $V$ on $X$, given by pushing forward the constant sheaf on the K3 double cover of $X$.
\end{enumerate}
\end{remark}

\begin{proof} If we let $\widetilde{\cur{X}}$ denote the canonical double cover of $\cur{X}$, with special fibre $\widetilde{Y}$ and generic fibre $\widetilde{X}$, then as remarked above, $\widetilde{X}$ is a smooth K3 surface over $K$, and $\widetilde{\cur{X}}$ is a minimal semistable model for $\widetilde{X}$. Hence $\widetilde{Y}$ is a combinatorial K3 surface, whose type can be deduced from the nilpotency index of the monodromy operator $N$ on $H^2_\et(\widetilde{X}_{\overline{F}},\Q_\ell)$. 

Now note that since $\widetilde{X}$ is simply connected, we have 
\[ \pi_2^\et(X_{\overline{F}})_{\Q_\ell} \cong \pi_2^\et(\widetilde{X}_{\overline{F}})_{\Q_\ell}  \cong H_2^\et(\widetilde{X}_{\overline{F}},\Q_\ell) \cong H^2_\et(\widetilde{X}_{\overline{F}},\Q_\ell)^\vee \]
for all $\ell$ (including $\ell=p$ when $\mathrm{char}(F)=0$), and the corresponding isomorphism holds by definition for $\pi_2^\rig(X/\rk)$. Hence $\widetilde{Y}$ is of Type I,II or III respectively as the nilpotency index of $N$ on $\pi_2(X)$ is 1,2 or 3. It therefore suffices to show that the type of $\widetilde{Y}$ is the same as that of $Y$. 

Note that we have a finite \'{e}tale map $f:\widetilde{Y}\rightarrow Y$, therefore if $\widetilde{Y}$ is of Type I, that is a smooth K3 surface, then we must also have that $Y$ is smooth, hence of Type I. If $Y$ is not smooth, then let the components of $Y$ be $Y_1,\ldots,Y_N$, and the components of $\widetilde{Y}$ be $\widetilde{Y}_1,\ldots,\widetilde{Y}_M$. After pulling back $f$ to each component $Y_i$, one of two things can occur:
\begin{enumerate} \item $f^{-1}(Y_i)$ is irreducible, and we get a non-trivial 2-cover $\widetilde{Y}_j\rightarrow Y_i$;
\item $f^{-1}(Y_i)$ splits into 2 disjoint components $\widetilde{Y}_j,\widetilde{Y}_{j'}$, each mapping isomorphically onto $Y$.
\end{enumerate}
If $\widetilde{Y}$ is of Type III, then each component $\widetilde{Y}_j$ is rational, hence, since rational varieties are simply connected each component of $Y$ is also rational, and $Y$ is of Type III. If $\widetilde{Y}$ is of Type II, then one of two things can happen.
\begin{enumerate}
\item $M>2$ and there exists a component of $\widetilde{Y}$ which is an elliptic ruled surface.
\item $M=2$ and $\widetilde{Y}=\widetilde{Y}_1\cup\widetilde{Y}_2$ consists of $2$ rational surfaces meeting along an elliptic curve.
\end{enumerate}
In the first case, one verifies that $Y$ must also have a component isomorphic to an elliptic ruled surface (since a rational surface cannot be an unramified cover of an elliptic ruled surface), and is therefore of Type II. In the second case, $Y$ must also have 2 components, (since otherwise $Y$, and therefore $\widetilde{Y}$, would be smooth), and each component of $\widetilde{Y}$ would be a non-trivial double cover of a component of $Y$. But since the components of $Y$ are either rational or elliptic ruled, this cannot happen.
\end{proof}

\section{Abelian surfaces}

In order to deal with abelian surfaces, we need the following analogue of Nakkajima's result,

\begin{theorem}\label{crass} Let $Y^{\log}$ be a logarithmic abelian surface over $k$. Then the underlying scheme $Y$ is a combinatorial abelian surface.
\end{theorem}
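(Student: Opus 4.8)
The plan is to run a classification argument on the dual graph of $Y_{\overline{k}}$ entirely in terms of coherent cohomology, mirroring the Persson--Kulikov strategy but replacing singular-cohomology inputs with the computations available over $k$. First I would reduce to the geometrically connected case, which is automatic here since a logarithmic abelian surface is by definition a proper SNCL scheme, hence $Y$ is connected; and then pass to $\overline{k}$, so I may assume $k$ algebraically closed. The hypotheses $\omega_Y\cong\mathcal{O}_Y$ and $h^1(Y,\mathcal{O}_Y)=2$, together with Proposition \ref{dualsheaf} giving $h^2(Y,\mathcal{O}_Y)=h^0(Y,\omega_Y)=1$, pin down $\chi(Y,\mathcal{O}_Y)=1-2+1=0$. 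I would then feed this into the spectral sequence $E_1^{s,t}=H^t(Y^{(s)},\mathcal{O}_{Y^{(s)}})\Rightarrow H^{s+t}(Y,\mathcal{O}_Y)$ from the Corollary to Lemma \ref{sslc}, which is the coherent-cohomology analogue of the weight spectral sequence used in Theorem \ref{k3main}.

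The core of the argument is a case division on $N$, the number of components. If $N=1$ then $Y$ is a smooth proper surface with $\omega_Y\cong\mathcal{O}_Y$ and $h^1(Y,\mathcal{O}_Y)=2$, hence an abelian surface (Type I) by the classification recalled in \S5 (valid over $k$ since $p>3$). If $N>1$, the adjunction/residue formalism for SNCL varieties shows that each double curve $C_{ij}$ on a component $Y_i$ contributes to $\omega_{Y_i}$: restricting $\omega_Y\cong\mathcal{O}_Y$ to $Y_i$ and using the logarithmic adjunction $\omega_Y|_{Y_i}\cong\omega_{Y_i}(D_i)$, where $D_i=\sum_j C_{ij}$ is the sum of the double curves on $Y_i$, gives $\omega_{Y_i}+D_i=0$ in $\mathrm{Pic}(Y_i)$, together with the SNCL conditions that $D_i$ is a simple normal crossings divisor of smooth curves. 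This is precisely the hypothesis of Lemma \ref{paac}, so each component is an elliptic ruled surface or a rational surface, with the double locus of the prescribed shape (disjoint elliptic rulings / a single elliptic $2$-ruling / an elliptic curve / a cycle of rational curves). The spectral sequence then forces the global combinatorics: using Lemma \ref{betti} to read off $h^1$ of each stratum, the equality $\chi(Y,\mathcal{O}_Y)=0$ and the value $h^1(Y,\mathcal{O}_Y)=2$ rule out the chain configurations (which would give $\chi=1$ or the wrong $h^1$, exactly as in the K3 case computation) and also rule out the presence of any rational component when elliptic ruled components are present; one is left with either a cycle of elliptic ruled surfaces glued along rulings (Type II, dual graph a circle) or a configuration of rational surfaces glued along cycles of rational curves. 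In the latter case the edge-homology computation $E_2^{2,0}=H^2_{\mathrm{sing}}(\Gamma,k)$ and the Euler-characteristic/$h^1$ constraints identify $\Gamma$ as a closed surface with $\chi(\Gamma)=0$ and $H^1(\Gamma)$ of rank $2$, i.e. a triangulation of the torus $S^1\times S^1$ (Type III).

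The main obstacle I expect is the last step: showing that the dual complex $\Gamma$ is genuinely a triangulation of a closed surface (rather than some more degenerate $2$-complex) and computing its homology. Over $\C$ this follows from Clemens--Schmid and the fact that $Y$ is a degeneration of a smooth surface; in the present purely coherent setting one must instead argue directly that around each vertex $\Gamma$ is a disc, using that each component is a rational surface whose double curves form a single cycle (so the link of a vertex is a circle), and around each edge $\Gamma$ is a segment with two triangles (so the double curve $C_{ij}$ lies on exactly the components indexed by the endpoints, which is part of the SNCL structure). The passage from ``$\Gamma$ is a closed surface'' to ``$\Gamma$ is a torus'' is then forced by $\chi(\Gamma)=\sum_s(-1)^s\#Y^{(s)}=\chi(Y)=0$ (the stratawise Euler characteristics of the positive-dimensional strata cancel) together with $b_1(\Gamma)=2$, which pins down the torus among all closed surfaces; one also needs to check orientability, which should follow from the way the elliptic fibrations on the Type II boundary degenerate, or more simply from $b_1=2$ already excluding the Klein bottle. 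Verifying these topological facts carefully, while keeping everything expressible via the sheaves $i_{s*}\mathcal{O}_{Y^{(s)}}$, is where the real work lies; the algebraic-geometry inputs (Lemmas \ref{paac}, \ref{betti}, \ref{gcggcs} and Proposition \ref{dualsheaf}) are all already in place.
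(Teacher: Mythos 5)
Your proposal follows essentially the same route as the paper's proof: restrict $\omega_Y\cong\mathcal{O}_Y$ to each component to get $K_{Y_i}+D_i=0$, invoke Lemma \ref{paac}, and then use the coherent spectral sequence of Lemma \ref{sslc} together with $\chi(Y,\mathcal{O}_Y)=0$ and $h^1(Y,\mathcal{O}_Y)=2$ to rule out chains and force either a cycle of elliptic ruled surfaces (Type II) or a triangulation of a closed surface with $k$-Betti numbers $1,2,1$, hence a torus (Type III). The only small correction is that the stratum-wise inputs needed are coherent ones ($h^i(\mathcal{O})$ of rational and elliptic ruled surfaces and of the double curves, via birational invariance of coherent cohomology), not the $\ell$-adic/rigid Betti numbers of Lemma \ref{betti}, and the identification of $k$-Betti numbers of $\Gamma$ with the $\Q$-Betti numbers uses $p\neq 2$.
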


\begin{proof} We may assume that $k=\overline{k}$. We adapt the proof of Theorem II of \cite{Kul77}. Let $Y_1,\ldots,Y_N$ denote the components of $Y$, $C_{ij}=Y_i\cap Y_j$ for $i\neq j$ the double curves, and $T_{C_{ij}}$ the number of triple points on each curve $C_{ij}$. We may assume that $N>1$. 

Note that $\omega_Y|_{Y_i} \cong \Omega^2_{Y_i/k}(\log \sum_{j\neq i} C_{ij})\cong \cur{O}_{Y_i}$ and hence the divisor $K_{Y_i} + \sum_{j\neq i} C_{ij}$ on $Y_i$ is principal, where $K_{Y_i}$ is a canonical divisor on $Y_i$. Write $D_i=\sum_{j\neq i} C_{ij}$. Now applying Lemma \ref{paac} gives us the following possibilities for each $(Y_i,D_i)$:
\begin{enumerate} \item $Y_i$ is an elliptic ruled surface, and either:
\begin{enumerate} \item  $D_i=E_1+E_2$ where $E_1,E_2$ are 2 non-intersecting rulings;
\item a $D_i=E$ is a single 2-ruling.
\end{enumerate}
\item $Y_i$ is a rational surface, and either:
\begin{enumerate} \item $D_i=E$ is an elliptic curve inside $Y_i$;
\item $D_i=C_1+\ldots +C_d$ is a cycle of rational curves on $Y_i$.
\end{enumerate}
\end{enumerate}
First suppose that there is some $i$ such that case (2)(b) happens. Then this must also occur on each neighbour of $Y_i$, and since $Y$ is connected, it follows that this occurs on each component. The dual graph $\Gamma$ is therefore a triangulation of a compact surface without border.

Write
\[ Y^{(0)}=\CMcoprod_i Y_i,\;\;Y^{(1)}=\CMcoprod_{i<j}C_{ij},\;\;Y^{(2)}=\CMcoprod_{i<j<k}Y_i\cap Y_j\cap Y_k,\]
and consider the spectral sequence $H^t(Y^{(s)},\cur{O}_{Y^{(s)}})\Rightarrow H^{s+t}(Y,\cur{O}_Y) $ constructed in \S\ref{snclv}. Since the components $Y_i$ and the curves $C_{ij}$ are rational, it follows that $H^t(Y^{(s)},\cur{O}_{Y^{(s)}})=0$ for $t>0$ (see for example, Theorem 1 of \cite{CR11}), and therefore that the coherent cohomology $H^i(Y,\cur{O}_Y)$ of $Y$ is the same as the $k$-valued singular cohomology $H^i_\mathrm{sing}(\Gamma,k)$ of $\Gamma$. But since $p\neq 2$, the $k$-Betti numbers $\dim_k H^i_\mathrm{sing}(\Gamma,k)$ are the same as the $\Q$-Betti numbers $\dim_{\Q}H^i_\mathrm{sing}(\Gamma,\Q)$, the latter must therefore be $1,2,1$ and by the classification of closed 2-manifolds we can deduce that $\Gamma$ is a torus.

Finally let us suppose that all the double curves $C_{ij}$ are elliptic curves, so that each $T_{C_{ij}}=0$ (see the proof of Lemma \ref{paac}). Again examining the spectral sequence $H^t(Y^{(s)},\cur{O}_{Y^{(s)}})\Rightarrow H^{s+t}(Y,\cur{O}_Y)$ and using the fact that $\chi(E)=0$ for an elliptic curve, we can see that $0=\chi(Y)=\chi(Y^{(0)})=\bigoplus_i\chi(Y_i)$. Since each $Y_i$ is either rational ($\chi=1$) or elliptic ruled ($\chi=0$), it follows that each $Y_i$ must be elliptic ruled, and we are in the case (1) above. The dual graph $\Gamma$ is one dimensional, and since each component has on it at most two double curves, $\Gamma$ is either a line segment or a circle.

If $\Gamma$ were a line segment, then $Y=Y_1\cup_{E_1}\ldots \cup_{E_{N-1}} Y_N$ would be a chain. Then birational invariance of coherent cohomology would imply that the maps
\begin{align*} H^0(Y_i,\mathcal{O}_{Y_i}) &\rightarrow H^0(E_i,\mathcal{O}_{E_i}) \\
H^0(Y_{i+1},\mathcal{O}_{Y_{i+1}}) &\rightarrow H^0(E_i,\mathcal{O}_{E_i}) \\ 
H^1(Y_i,\mathcal{O}_{Y_i}) &\rightarrow H^1(E_i,\mathcal{O}_{E_i}) \\
H^1(Y_{i+1},\mathcal{O}_{Y_{i+1}}) &\rightarrow H^1(E_i,\mathcal{O}_{E_i}) 
\end{align*}
would be isomorphisms, and hence some basic linear algebra would imply surjectivity of the maps
\begin{align*} H^0(Y^{(0)},\mathcal{O}_{Y^{(0)}}) &\rightarrow H^0(Y^{(1)},\mathcal{O}_{Y^{(1)}}) \\
H^1(Y^{(0)},\mathcal{O}_{Y^{(0)}}) &\rightarrow H^1(Y^{(1)},\mathcal{O}_{Y^{(1)}}).
\end{align*}
Also, we would have $\dim_k H^1(Y^{(0)},\mathcal{O}_{Y^{(0)}})={N}$ and $\dim_k H^1(Y^{(1)},\mathcal{O}_{Y^{(1)}})={N-1}$, and hence again examining the spectral sequence $H^t(Y^{(s)},\cur{O}_{Y^{(s)}})\Rightarrow H^{s+t}(Y,\cur{O}_Y)$ would imply that $\dim_k H^1(Y,\mathcal{O}_Y)=1$. Since we know that in fact $\dim_kH^1(Y,\mathcal{O}_Y)=2$ (by the definition of a logarithmic abelian surface), this cannot happen. Hence $\Gamma$ must be a circle and $Y$ is of Type II.
\end{proof}

\begin{corollary} Let $\mathscr{X}/R$ be a minimal semistable model of an abelian surface $X/F$. Then the special fibre $Y$ is a combinatorial abelian surface.
\end{corollary}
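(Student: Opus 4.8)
The plan is to obtain this corollary by simply chaining together the two results established immediately beforehand. First I would invoke the Proposition proved just after Definition \ref{logs}, which says that whenever $X/F$ is a minimal surface of Kodaira dimension $0$ and $\mathscr{X}/R$ is a minimal model, the special fibre $Y^{\log}$ equipped with its induced log structure is a logarithmic surface of the corresponding type; applied to the abelian case this tells us that $Y^{\log}$ is a logarithmic abelian surface over $k$ (in particular it is a proper SNCL scheme of dimension $2$, using Lemma \ref{gcggcs} for geometric connectedness). Then I would apply Theorem \ref{crass}, whose content is precisely that the underlying scheme of a logarithmic abelian surface is a combinatorial abelian surface. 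Composing the two statements gives the result.

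The one point that deserves a word is the passage between base fields: Theorem \ref{crass} is phrased (and proved) over an algebraically closed ground field, whereas here $k$ need not be algebraically closed. This is harmless, since by Definition \ref{crda} being a combinatorial abelian surface is a condition on $Y_{\overline{k}}$, and the property of $Y^{\log}$ being a logarithmic abelian surface is stable under the flat base change $k\to\overline{k}$ (the relevant coherent cohomology dimensions $h^i(Y,\mathcal{O}_Y)$ are unchanged, and $\omega_Y\cong\mathcal{O}_Y$ pulls back to $\omega_{Y_{\overline k}}\cong\mathcal{O}_{Y_{\overline k}}$). So after base change to $\overline{k}$ we are exactly in the situation of Theorem \ref{crass}. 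I do not expect any genuine obstacle here: all the substantive work has already been carried out in the Proposition and in Theorem \ref{crass}, and this corollary is a formal consequence.
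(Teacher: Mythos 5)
Your proposal is correct and is exactly how the paper obtains this corollary: it is stated without proof as the immediate consequence of the preceding Proposition (minimal model $\Rightarrow$ $Y^{\log}$ is a logarithmic abelian surface) combined with Theorem \ref{crass}. Your remark on base change is fine but not really needed, since Theorem \ref{crass} is already stated over general $k$ (the reduction to $\overline{k}$ being carried out inside its proof) and Definition \ref{crda} is a geometric condition anyway.
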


If $X/F$ is an abelian surface, then for any prime $\ell\neq p$ we consider the quasi-unipotent $G_F$-representation $H^2_\et(X_{\overline{K}},\Q_\ell)$. For $\ell= p$ and $\mathrm{char}(F)=0$ we may also consider the de Rham representation $H^2_\et(X_{\overline{K}},\Q_p)$, and when $\mathrm{char}(F)=p=\ell$ the $\pn$-module $H^2_\rig(X/\rk)$. Again letting $H^2(X)$ stand for any of the above second cohomology groups then, in each case, we have a nilpotent monodromy operator $N$ associated to $H^2(X)$.

\begin{theorem} \label{abmain} Let $\cur{X}/R$ be a minimal semistable model for $X$, with special fibre $Y$. Then $Y$ is combinatorial of Type I, II or III respectively as the nilpotency index of $N$ on $H^2(X)$ is $1$, $2$ or $3$.
\end{theorem}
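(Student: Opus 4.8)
The plan is to follow the template of the proof of Theorem~\ref{k3main} — the weight spectral sequence together with the weight monodromy conjecture — the new ingredient being that for a (geometrically) abelian surface there is a $G_F$-equivariant (resp.\ $\pn$-equivariant) isomorphism $H^2(X)\cong\bigwedge^2 H^1(X)$ compatible with the monodromy operator $N$. This both reduces the computation of the nilpotency index of $N$ on $H^2(X)$ to a question about $N$ on $H^1(X)$, and means that the only input we need from the weight monodromy conjecture is for $H^1$ (hence $\bigwedge^2 H^1$) of an abelian variety, which is available in all three of our cohomological settings. Concretely, $N^2=0$ on $H^1(X)$, and if $N$ has rank $t$ on $H^1(X)$ (so $t$ is the toric rank of the semistable reduction of $\mathrm{Pic}^0_{X/F}$, and $t\in\{0,1,2\}$) then an elementary linear algebra computation shows that the derivation extension of $N$ to $H^2(X)=\bigwedge^2 H^1(X)$ has nilpotency index $1$, $2$ or $3$ according as $t=0$, $1$ or $2$. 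By the corollary to Theorem~\ref{crass} we already know $Y$ is a combinatorial abelian surface, so it is of Type I, II or III, and it remains to match the type with $t$.

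The three cases ($\ell\neq p$; $\ell=p$, $\mathrm{char}(F)=0$; $\ell=p=\mathrm{char}(F)$) are handled uniformly. In each there is a weight spectral sequence of the shape written down in the proof of Theorem~\ref{k3main}, degenerating at $E_2$ and carrying a morphism $N\colon E_r^{s,t}\to E_r^{s+2,t-2}$ abutting to the monodromy operator on $H^{s+t}(X)$: for $\ell\neq p$ exactly as there, for $\ell=p$ and $\mathrm{char}(F)=0$ the Hyodo--Kato weight spectral sequence, and for $\mathrm{char}(F)=p$ one passes between $H^i_\rig(X/\rk)$ and the log-crystalline cohomology of $Y^{\log}$ via Proposition~\ref{cstpas}, the latter carrying the usual weight spectral sequence. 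Exactly as in the proof of Theorem~\ref{k3main}, weight monodromy then shows that $N$ on $H^2(X)$ has nilpotency index $1$ iff $E_2^{1,1}=E_2^{2,0}=0$, index $2$ iff $E_2^{1,1}\neq0$ and $E_2^{2,0}=0$, and index $3$ iff $E_2^{2,0}\neq0$ (recall that $N^3=0$ on $H^2$ of a proper surface).

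Two of the three types are then immediate. If $Y$ is of Type~I it is a smooth abelian surface, so $Y^{(s)}=\emptyset$ for $s\geq1$, all terms $E_2^{s,t}$ with $s\geq1$ vanish, and $N=0$. If $Y$ is of Type~III then $Y^{(3)}=\emptyset$ (the dual graph $\Gamma$ is a two-dimensional simplicial complex) and, just as in the K3 case,
\[ E_2^{2,0}=\mathrm{coker}\left(H^0(Y^{(1)}_{\overline k})\to H^0(Y^{(2)}_{\overline k})\right)\cong H^2_{\mathrm{sing}}(\Gamma,\Q_\ell), \]
which is one-dimensional since $\Gamma$ triangulates the torus; hence $N^2\neq0$ and the nilpotency index is $3$. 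For Type~II, $Y=Y_1\cup\dots\cup Y_N$ is a cycle of elliptic ruled surfaces, so there are no triple points, $Y^{(2)}=\emptyset$, and therefore $E_2^{2,0}=0$ and $N^2=0$ on $H^2(X)$; it remains to check $N\neq0$ on $H^2(X)$. For this I would argue by contradiction: if $N=0$ on $H^2(X)\cong\bigwedge^2 H^1(X)$ then, since $N$ there is the derivation extension of its restriction to $H^1(X)$ and $N^2=0$ on $H^1(X)$, one gets $N=0$ on $H^1(X)$, i.e.\ toric rank $t=0$; but running the same weight spectral sequence in degree $1$ gives
\[ \dim E_2^{1,0}=\dim\mathrm{coker}\left(H^0(Y^{(0)}_{\overline k})\to H^0(Y^{(1)}_{\overline k})\right)=\dim H^1_{\mathrm{sing}}(\Gamma,\Q_\ell)=b_1(S^1)=1, \]
and $\dim E_2^{1,0}$ is exactly the toric rank by weight monodromy for $H^1$ of an abelian variety, so $t=1$ — a contradiction. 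Hence $N$ has nilpotency index $2$ and $Y$ is of Type~II, completing the correspondence.

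The step I expect to be the main obstacle is the uniform treatment of the two $p$-adic settings: one must check that the weight spectral sequence and the weight monodromy statement for $H^1$ really are available — in mixed characteristic this is standard (Hyodo--Kato, $p$-adic comparison), while in equicharacteristic $p$ it requires Proposition~\ref{cstpas} together with the description of $H^1_\rig$ of a semistable abelian variety from \cite{LP16} — and that the isomorphism $H^2(X)\cong\bigwedge^2 H^1(X)$, along with the expected shape of the monodromy filtration on $H^1$, persists when $X$ is only \emph{geometrically} an abelian surface (so that one may have to make a finite separable extension of $F$ to acquire a rational point and then descend). Modulo this, the argument is a transcription of the proof of Theorem~\ref{k3main} with the extra bookkeeping supplied by $\bigwedge^2 H^1$.
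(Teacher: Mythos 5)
Your proposal is correct, and its skeleton is the paper's: the weight spectral sequence (Mokrane's in the $p$-adic cases, fed through the $C_{\mathrm{st}}$-type comparisons and Proposition \ref{cstpas} in equicharacteristic), its degeneration and compatibility with $N$, the identification of $E_2^{2,0}$ with $H^2_{\mathrm{sing}}(\Gamma,\Q_\ell)$ of the dual graph, and weight--monodromy to translate the nilpotency index into (non)vanishing of $E_2^{1,1}$ and $E_2^{2,0}$, so Types I and III are handled exactly as in the paper. Where you genuinely diverge is the one nontrivial step, proving $N\neq 0$ on $H^2(X)$ when $Y$ is of Type II. The paper does this by a direct dimension count: from $\dim H^1(X)=4$, the symmetry $E_2^{-1,2}\cong E_2^{1,0}\cong H^1_{\mathrm{sing}}(\Gamma,\Q_\ell)$ and Lemma \ref{betti} (which gives $\dim H^1(Y^{(0)})=\dim H^1(Y^{(1)})=2N$ for a cycle of $N$ elliptic ruled surfaces) it computes $\dim E_2^{1,1}=\dim E_2^{0,1}=2\neq 0$. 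You instead route through $H^2(X)\cong\bigwedge^2H^1(X)$: if $N$ vanished on $H^2$ it would vanish on $H^1$, contradicting $E_2^{1,0}\cong H^1_{\mathrm{sing}}(S^1,\Q_\ell)\neq 0$ together with weight--monodromy in degree $1$; your linear-algebra dictionary between the rank of $N$ on $H^1$ (the toric rank) and the index on $\bigwedge^2H^1$ is correct. What your route buys is that the weight--monodromy input is essentially only the classical degree-$1$ statement for semistable abelian varieties (plus its exterior square), and it makes the link with the toric rank explicit, which the paper records separately in Proposition \ref{typerank}; what it costs is exactly the point you flag yourself, namely checking that $H^2(X)\cong\bigwedge^2H^1(X)$ is $N$-compatible in all three settings when $X$ is only geometrically abelian --- in equicharacteristic $p$ the paper only states $H^i_\rig(A/\ekd)\cong\bigwedge^iH^1_\rig(A/\ekd)$ for abelian varieties over $k\lser{\pi}$, so a finite \'etale base change and descent (harmless, since the nilpotency index is insensitive to finite separable extension of $F$) must be supplied, whereas the paper's count needs none of this.
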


\begin{proof}
We will treat the case $\ell\neq p$ and $\mathrm{char}(F)=0$, the other cases are handled entirely similarly. Let $Y_1,\ldots , Y_N$ be the smooth components of the special fibre $Y$. For any $I=\{ i_1\ldots,i_n\}$ write $Y_I=\cap_{i\in I} Y_i$ and for any $s\geq 0$ write $Y^{(s)}=\CMcoprod_{\norm{I}=s+1}Y_I$, these are all smooth over $k$ and empty if $s>2$. 

As in the proof of Theorem \ref{k3main} we consider the weight spectral sequence
\[ E_1^{s,t} = \bigoplus_{j\geq \max\{0,-s\}} H^{t-2j}_\et(Y^{(s+2j)}_{\overline{k}},\Q_\ell)(-j)\Rightarrow H^{s+t}_\et(X_{\overline{F}},\Q_\ell). \]
As before it suffices to show the following.
\begin{enumerate} \item If $Y$ is of Type I, then $E_2^{1,1}=0$. 
\item If $Y$ is of Type II, then $E_2^{1,1}\neq0$ and $E_2^{2,0}=0$.
\item If $Y$ is of Type III, then $E_2^{2,0}\neq 0$.
\end{enumerate}
Again, the first of these is trivial, and in both the Type II and III cases the term $E_2^{2,0}$ is the second singular cohomology $H^2_\mathrm{sing}(\Gamma,\Q_\ell)$ of the dual graph $\Gamma$. It therefore suffices to show that if $Y$ is of Type II, then $E_2^{1,1}\neq0$.

To show this, note that we have $\dim_{\Q_\ell} E_2^{i,0}=\dim_{\Q_\ell} H^i_\mathrm{sing}(\Gamma,\Q_\ell)$, which is $1$ for $i=0,1$ and zero otherwise. Hence by the fact that $E_2^{-r,w+r}\isomto E_2^{r,w-r}$ we may deduce that $\dim_{\Q_\ell}E_2^{-1,2}=1$, and hence from the fact that
\[ \dim_{\Q_\ell}E_2^{-1,2}+\dim_{\Q_\ell}E_2^{0,1}+\dim_{\Q_\ell}E_2^{1,0}=\dim_{\Q_\ell} H^1_\et(X_{\overline{F}},\Q_\ell)=4 \]
that $\dim_{\Q_\ell}E_2^{0,1}=2$. If we write $Y=Y_1\cup \ldots \cup Y_N$ as a union of $N$ elliptic ruled surfaces, then $Y^{(1)}$ is a disjoint union of $N$ elliptic curves. Hence by Lemma \ref{betti} we must have 
\[ \dim_{\Q_\ell}H^1_\et(Y^{(0)}_{\overline{k}},\Q_\ell)= \dim_{\Q_\ell}H^1_\et(Y^{(1)}_{\overline{k}},\Q_\ell)=2N.\]
Hence $\dim_{\Q_\ell}E_2^{1,1}=\dim_{\Q_\ell}E_2^{0,1}=2$ and therefore $E_2^{1,1}\neq0$.

When $\ell=p$, the $\ell$-adic weight spectral should be replaced by the $p$-adic one constructed by Mokrane in \cite{Mok93}. That this abuts to the $p$-adic \'{e}tale cohomology when $\mathrm{char}(F)=0$ follows from Matsumoto's extension of Fontaine's $C_{\mathrm{st}}$ conjecture to algebraic spaces in \cite{Mat15}, and that it abuts to the $\rk$-valued rigid cohomology when $\mathrm{char}(F)=p$ follows from Proposition \ref{cstpas}.
\end{proof}

\section{Bielliptic surfaces}

We can now complete our treatment of minimal models of surfaces of Kodaira dimension $0$ by investigating what happens for bielliptic surfaces.

\begin{theorem}\label{hst} Let $Y^{\log}$ be a logarithmic bielliptic surface over $k$. Then the underlying scheme $Y$ is a combinatorial bielliptic surface. \end{theorem}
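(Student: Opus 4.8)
The plan is to follow closely the proof of Theorem~\ref{crass} (which itself adapts Kulikov's Theorem~II from \cite{Kul77}), keeping track of the two numerical features that distinguish a logarithmic bielliptic surface $Y$ from the abelian case: $\omega_Y$ is only torsion rather than trivial, and one has $h^2(Y,\mathcal{O}_Y)=h^0(Y,\omega_Y)=0$ and $h^1(Y,\mathcal{O}_Y)=1$ (so $\chi(Y,\mathcal{O}_Y)=0$) in place of $h^2=1$, $h^1=2$.

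First I would reduce to the case $k=\overline{k}$. If $Y$ has a single component then $\omega_Y$ is the ordinary canonical bundle, and $Y$ is a smooth projective surface with $\omega_Y$ torsion --- hence automatically minimal, of Kodaira dimension $0$ --- with $h^1(\mathcal{O}_Y)=1$ and $h^0(\omega_Y)=0$, so $Y$ is a bielliptic surface by the Enriques classification of surfaces (valid in characteristic $p>3$), i.e.\ of Type~I. So assume there are components $Y_1,\dots,Y_N$ with $N>1$, double curves $C_{ij}=Y_i\cap Y_j$, and put $D_i=\sum_{j\neq i}C_{ij}$. From the adjunction isomorphism $\omega_Y|_{Y_i}\cong\Omega^2_{Y_i/k}(\log D_i)$ and the fact that $\omega_Y$ is torsion we get that $K_{Y_i}+D_i$ is numerically trivial on $Y_i$; since Persson's proof of Lemma~\ref{paac} uses only the numerical equivalence $K_{Y_i}\equiv -D_i$ (the genus formula, the identity $D_i\cdot E=-K_{Y_i}\cdot E=1$ on $(-1)$-curves, and $D_0\cdot F=2$ on a ruled minimal model), it applies verbatim and gives the same list of possibilities for $(Y_i,D_i)$ as in Theorem~\ref{crass}: either $Y_i$ is elliptic ruled with $D_i$ a pair of disjoint rulings or a single $2$-ruling, or $Y_i$ is rational with $D_i$ an elliptic curve or a cycle of rational curves.

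Then I would run the same dichotomy as in Theorem~\ref{crass}, using the spectral sequence $H^t(Y^{(s)},\mathcal{O}_{Y^{(s)}})\Rightarrow H^{s+t}(Y,\mathcal{O}_Y)$ constructed in Section~\ref{snclv}. If some $(Y_i,D_i)$ is of the `rational, with a cycle of rational curves' type, this propagates to every neighbouring component (the shared double curve is rational, whereas in the other three cases all double curves are elliptic) and hence, by connectedness, to every component; then $Y$ is a union of smooth rational surfaces whose double curves form cycles of rational curves, the dual complex $\Gamma$ is a triangulation of a closed surface, and because all the $Y^{(s)}$ have vanishing higher coherent cohomology the spectral sequence degenerates and yields $H^i(Y,\mathcal{O}_Y)\cong H^i_{\mathrm{sing}}(\Gamma,k)$. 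Hence $\Gamma$ has $k$-Betti numbers $(1,1,0)$; since $p\neq 2$ these agree with the $\Q$-Betti numbers, so by the classification of closed surfaces $\Gamma$ is a Klein bottle and $Y$ is of Type~III. Otherwise every double curve is elliptic, so there are no triple points and $0=\chi(Y,\mathcal{O}_Y)=\sum_i\chi(Y_i,\mathcal{O}_{Y_i})$ forces every $Y_i$ to be elliptic ruled (with $D_i$ one or two rulings); the dual graph is then connected with every vertex of degree $\le 2$, i.e.\ a segment or a circle, so $Y$ is a chain of elliptic ruled surfaces with $2$-rulings on the two end components, or a cycle of elliptic ruled surfaces --- in both cases of Type~II. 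Here, in contrast to the abelian case, the chain is not excluded, precisely because $h^1(Y,\mathcal{O}_Y)=1$ (whereas $h^1=2$ would force a cycle).

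I do not expect a serious obstacle beyond the bookkeeping already handled in Theorem~\ref{crass}. The two points that genuinely need care are the remark that Lemma~\ref{paac}, though stated with $K_V+D=0$ in $\mathrm{Pic}$, remains valid when $K_{Y_i}+D_i$ is merely numerically trivial (e.g.\ torsion), and the Euler-characteristic/Betti-number computation, which here must land on the Klein bottle rather than the torus and must leave both the chain and the cycle open in Type~II.
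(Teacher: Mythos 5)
Your proposal is correct, but it takes a genuinely different route from the paper. The paper's proof does not re-run the argument of Theorem~\ref{crass} on $Y$ itself: it passes to the canonical $m$-fold \'etale cover $\pi:\widetilde{Y}^{\log}\rightarrow Y^{\log}$ trivialising $\omega_{Y^{\log}}$, observes that $\widetilde{Y}^{\log}$ is a logarithmic \emph{abelian} surface, invokes Theorem~\ref{crass} upstairs, and then descends the type to $Y$. In doing so it only ever applies Lemma~\ref{paac} where its hypothesis $K_V+D=0$ in $\mathrm{Pic}$ holds literally: on the cover (where $\omega_{\widetilde{Y}}$ is trivial) in the Type~II case, and on the components of $Y$ in the Type~III case after noting that these are rational (being finite \'etale under rational surfaces) and that rational surfaces have torsion-free Picard group, so torsion of $K_{Y_i}+D_i$ upgrades to equality. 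Your argument instead works directly on $Y$, at the price of the observation you correctly flag: Persson's proof of Lemma~\ref{paac} uses only intersection numbers (the genus formula, $D\cdot E=-K_V\cdot E=1$ on exceptional curves, $D_0\cdot F=2$ on the minimal model), so the lemma remains valid when $K_{Y_i}+D_i$ is merely numerically trivial; granting that, your dichotomy, the $\chi=0$ count forcing elliptic ruled components in the non-cycle case, and the Betti-number computation $(1,1,0)$ pinning down the Klein bottle (using $p\neq 2$) all go through, and you rightly leave both chain and cycle open in Type~II since $h^1(Y,\mathcal{O}_Y)=1$ no longer excludes the chain. The trade-off: the paper's route reuses Theorem~\ref{crass} and Lemma~\ref{paac} as stated but must transfer geometric structure (elliptic ruled versus rational components, rulings versus $2$-rulings) along the finite \'etale cover; your route is self-contained on $Y$ and makes the bielliptic bookkeeping explicit, but rests on the numerical strengthening of Lemma~\ref{paac}, which should be stated and checked as a separate remark if written up.
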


\begin{proof} We may assume $k=\overline{k}$. Let $\pi:\widetilde{Y}^{\log}\rightarrow Y^{\log}$ be the canonical $m$-cover associated to $\omega_{Y^{\log}}$. Then one easily checks that $\widetilde{Y}^{\log}$ is a logarithmic abelian surface over $k$, and hence is combinatorial of Type I, II or III. If $\widetilde{Y}$ is of Type I, then $\widetilde{Y}$, and therefore $Y$, must be smooth over $k$, and hence $Y$ is a smooth bielliptic surface over $k$, i.e. of Type I. 

So assume that $\widetilde{Y}$ is of Type II or III. Let $\widetilde{Y}_1,\ldots,\widetilde{Y}_M$ denote the components of $\widetilde{Y}$ and $Y_1,\ldots,Y_N$ those of $Y$. Note that as in the proof of Theorem \ref{crass} we have
\[ m(K_{Y_i}+\sum_{j\neq i}C_{ij})=0 \]
in $\mathrm{Pic}(Y_i)$, where $C_{ij}$ are the double curves. 

Suppose that $\widetilde{Y}$ is of Type II. Note that each component of $\widetilde{Y}$ is finite \'{e}tale over some component of $Y$, and hence each component of $Y$ is an elliptic ruled surface. For each $Y_i$ choose some $\widetilde{Y}_l \rightarrow Y_i$ finite \'etale, and let $\widetilde{C}_{lj}$ be the inverse image of the double curves. Then we have 
\[ K_{\widetilde{Y}_l}+\sum_j\widetilde{C}_{lj}=0\]
in $\mathrm{Pic}(\widetilde{Y}_l)$. Applying Lemma \ref{paac} we can see that $\sum_j\widetilde{C}_{il}$ is either a single elliptic curve $E$, which is a 2-ruling on $\widetilde{Y}_l$, or two disjoint rulings $E_1,E_2$. Hence the same is true for $\sum_j C_{ij}$ on $Y_i$, and therefore $Y$ is of Type II.

Finally, suppose that $\widetilde{Y}$ is of Type III. Then again, each component of $\widetilde{Y}$ is finite \'{e}tale over some component of $Y$, hence all of the latter are rational. Since the Picard group of a rational surface is torsion free, it follows that we must have
\[ K_{Y_i}+\sum_{j\neq i}C_{ij} =0\]
on each $Y_i$. Hence applying Lemma \ref{paac} as in the proof of Theorem \ref{crass} it suffices to show that the dual graph $\Gamma$ of $Y$ is a triangulation of the Klein bottle. But now examining the spectral sequence 
\[ E^{s,t}_1:=H^t(Y^{(s)},\cur{O}_{Y^{(s)}})\Rightarrow H^{s+t}(Y,\cur{O}_Y) \]
(where $Y^{(s)}$ is defined similarly to before), and using the fact that $\mathrm{char}(k)>2$, we can see that the Betti numbers of $\Gamma$ are the same as the dimensions of the coherent cohomology of $Y$, and therefore $Y$ is of Type III. \end{proof}

To formulate the analogue of Theorem \ref{abmain} for bielliptic surfaces, we will need to construct a family of canonical local systems on our bielliptic surface $X$. Note that the torsion element $\omega_X\in \mathrm{Pic}(X)[m]\in H^1(X,\mu_m)$ gives rise to a $\mu_m$-torsor over $X$, and hence a canonical $\Q$-valued permutation representation $\rho$ of the fundamental group $\pi_1^\et(X,\bar{x})$, and we can use this to construct canonical $\ell$- or $p$-adic local systems on $X$. When $\ell\neq p$ we obtain a continuous representation $\rho\otimes_{\Q} \Q_\ell$ of $\pi_1^{\et}(X,\bar{x})$ and hence a lisse $\ell$-adic sheaf $V_\ell$ on $X$, and when $\ell=p$ and $\mathrm{char}(F)=0$ we may do the same to obtain a lisse $p$-adic sheaf $V_p$ on $X$, and when $\ell=\mathrm{char}(F)=p$ we obtain an overconvergent $F$-isocrystal $V_p$ on $X/K$ using the construction of \S\ref{rpad}.

Then the local systems $V_\ell,V_p$ do not depend on the choice of point $\bar{x}$, and the $G_F$-representations $H^2_\et(X_{\overline{F}},V_\ell)$ and $H^2_\et(X_{\overline{F}},V_p)$ when $\mathrm{char}(F)=0$ are quasi-unipotent and de Rham respectively, we may also consider the $\pn$-module
\[ H^2_\rig(X/\rk,V_p):=H^2_\rig(X/\ekd,V_p)\otimes_{\ekd}\rk\]
over $\rk$. Letting $H^2(X,V)$ stand for any of $H^2_\et(X_{\overline{F}},V_\ell)$, $H^2_\et(X_{\overline{F}},V_p)$ or $H^2_\et(X/\cur{R}_K,V_p)$, in all cases we obtain monodromy operators $N$ associated to $H^2(X,V)$. 

\begin{remark} This construction might seem a little laboured, since what we are really constructing is simply the pushforward of the constant sheaf via the canonical abelian cover of $X$. The point of describing it in the above way is to emphasise the fact that the local systems $V_\ell,V_p$ are entirely intrinsic to $X$.
\end{remark}

\begin{theorem} \label{bimain} Let $\cur{X}/R$ be a minimal semistable model for $X$, with special fibre $Y$. Then $Y$ is combinatorial of Type I, II or III respectively as the nilpotency index of $N$ on $H^2(X,V)$ is $1$, $2$ or $3$.
\end{theorem}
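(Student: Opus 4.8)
The plan is to reduce Theorem~\ref{bimain} to the abelian case already established in Theorem~\ref{abmain}, using the canonical abelian cover, exactly as Theorem~\ref{cgre} for Enriques surfaces was reduced to the K3 case.

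Let $\pi\colon\widetilde{\mathscr{X}}\to\mathscr{X}$ denote the cyclic $\mu_m$-cover attached to the torsion class $\omega_{\mathscr{X}}\in\mathrm{Pic}(\mathscr{X})[m]$, with generic fibre $f\colon\widetilde{X}\to X$ and special fibre $\widetilde{f}\colon\widetilde{Y}\to Y$. Since $p>3$ we have $\gcd(m,p)=1$, so $\pi$ is finite \'etale; hence $\widetilde{\mathscr{X}}$ is again a strictly semistable model of its generic fibre $\widetilde{X}$ (all the defining conditions of Definition~\ref{defmod}, including being a scheme on special fibres and having smooth components of the special fibre, are inherited through the finite \'etale cover $\pi$), and since $\pi^*\omega_{\mathscr{X}}\cong\mathcal{O}_{\widetilde{\mathscr{X}}}$ by construction of the cyclic cover we also get $\omega_{\widetilde{\mathscr{X}}}\cong\mathcal{O}_{\widetilde{\mathscr{X}}}$. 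As $\widetilde{X}$ is an abelian surface (the canonical cover of a bielliptic surface), $\widetilde{\mathscr{X}}$ is thus a minimal semistable model of $\widetilde{X}$, so by the corollary to Theorem~\ref{crass} the special fibre $\widetilde{Y}$ is a combinatorial abelian surface, and by Theorem~\ref{abmain} the type of $\widetilde{Y}$ is $\mathrm{I}$, $\mathrm{II}$ or $\mathrm{III}$ according as the nilpotency index of $N$ on $H^2(\widetilde{X})$ is $1$, $2$ or $3$.

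The first real step is to identify $H^2(X,V)$ with $H^2(\widetilde{X})$ compatibly with the monodromy operator. By its very construction, in each of the three settings $V$ is the pushforward along $f$ of the constant coefficient object: $V_\ell=f_*\Q_\ell$ for $\ell\neq p$, $V_p=f_*\Q_p$ for $\ell=p$ and $\mathrm{char}(F)=0$, and $V_p=f_*\mathcal{O}^\dagger_{\widetilde{X}/K}$ for $\ell=\mathrm{char}(F)=p$ (the last using the pushforward functor recalled in \S\ref{rpad}). Because $f$ is finite \'etale, pushforward along $f$ is exact with vanishing higher direct images on the relevant coefficient categories, so $H^i(X,V)\cong H^i(\widetilde{X})$ for all $i$, $G_F$-equivariantly in the \'etale cases and $\pn$-linearly in the equicharacteristic rigid case; since everything is defined over $F$ this identifies the attached monodromy operators. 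In particular the nilpotency index of $N$ on $H^2(X,V)$ equals that of $N$ on $H^2(\widetilde{X})$, and hence, by the previous paragraph, it determines the type of $\widetilde{Y}$.

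It then remains only to note that $Y$ has the same type as $\widetilde{Y}$, and this is precisely what the proof of Theorem~\ref{hst} already shows: that argument is organised according to whether the abelian cover $\widetilde{Y}$ is of Type $\mathrm{I}$, $\mathrm{II}$ or $\mathrm{III}$, and in each case it concludes that $Y$ is combinatorial bielliptic of the same type --- Type $\mathrm{I}$ from smoothness of $\widetilde{f}$; Type $\mathrm{III}$ using that finite \'etale covers of rational (resp.\ elliptic ruled) surfaces are again rational (resp.\ elliptic ruled), so all components of $Y$ must be rational; and Type $\mathrm{II}$ by pulling a component $Y_i$ back to a component $\widetilde{Y}_l$ of $\widetilde{Y}$, where the relevant class $K_{\widetilde{Y}_l}+\sum_j\widetilde{C}_{lj}$ is honestly trivial so Lemma~\ref{paac} applies, and then transporting the conclusion (that $\sum_j C_{ij}$ is a $2$-ruling or two disjoint rulings) down to $Y_i$. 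Combining this with the monodromy-compatible identification of the previous step proves Theorem~\ref{bimain}.

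Given that Theorems~\ref{abmain} and~\ref{hst} are in hand, there is no serious obstacle; the one point needing genuine care is the monodromy-compatible isomorphism $H^2(X,V)\cong H^2(\widetilde{X})$ in the equicharacteristic $p$ case, where one must invoke the fact that pushforward along a finite \'etale cover commutes with rigid cohomology and respects the $\pn$-structure --- this is part of the formalism of \cite{LP16} recalled in \S\ref{rpad}.
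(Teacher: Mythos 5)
Your proposal is correct and follows essentially the same route as the paper: pass to the canonical $\mu_m$-cover $\widetilde{\mathscr{X}}\to\mathscr{X}$, which is a minimal model of an abelian surface, identify $H^2(X,V)\cong H^2(\widetilde{X})$ compatibly with monodromy (the paper's remark already notes $V$ is the pushforward of the constant coefficient object), apply Theorem~\ref{abmain} to $\widetilde{Y}$, and conclude via the comparison of types of $Y$ and $\widetilde{Y}$ established in the proof of Theorem~\ref{hst}. Your write-up merely fills in details (\'etaleness from $\gcd(m,p)=1$, the monodromy-compatibility of the pushforward identification) that the paper leaves implicit.
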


\begin{proof} The local systems $V_\ell,V_p$ are by construction such that there exists a finite \'{e}tale cover $\widetilde{\cur{X}}\rightarrow \cur{X}$ Galois with group $G$, such that $\widetilde{\cur{X}}$ is a minimal model of an abelian surface $\widetilde{X}$ and $H^2(X,V)\cong H^2(\widetilde{X})$. The special fibre $\widetilde{Y}$ is therefore a finite \'{e}tale cover of $Y$, also Galois with group $G$, and is a combinatorial abelian surface of Type I, II or III according to the nilpotency index of $N$ on $H^2(X,V)$. Hence we must show that $\widetilde{Y}$ and $Y$ have the same type; this was shown during the course of the proof of Theorem \ref{hst}.
\end{proof}

\section{Existence of models and abstract good reduction}

As explained in the introduction, our results so far are essentially `one half' of the good reduction problem for surfaces with $\kappa=0$, the other half consists of trying to actually find models nice enough to be able to apply the above methods.

\begin{definition} Let $X/F$ be a minimal surface of Kodaira dimension $0$. Then we say that $X$ admits potentially combinatorial reduction if after replacing $F$ by a finite separable extension, there exists a minimal model $\mathcal{X}/R$ of $X$.
\end{definition}

Then thanks to the results of the previous sections, for surfaces with potentially combinatorial reduction, we can describe the `type' of the reduction in terms of the nilpotency index of the monodromy operator on a suitable cohomology or homotopy group of $X$ (either $\ell$-adic or $p$-adic). We can therefore answer questions of `abstract reduction' type by establishing whether or not surfaces have potentially combinatorial reduction. The strongest result one might hope for is that every such surface has potentially combinatorial reduction. Unfortunately, this is not the case.

\begin{example}[\cite{LM14}, Theorem 2.8] There exist Enriques surfaces over $\Q_p$ which do not admit potentially combinatorial reduction.
\end{example}

This can in fact be seen already in the complex analytic case of a degenerating family $\mathscr{X}\rightarrow \Delta$ of K\"ahler manifolds over a disc (see Appendix 2 of \cite{Per77}). In Proposition 2.1 of \cite{LM14} it is shown that if a K3 surface over $F$ admits potentially strictly semistable reduction, then it admits potentially combinatorial reduction. Again, while the former is always conjectured, it can only be proved under certain conditions, see Corollary 2.2 of \emph{loc. cit}. Since we know that abelian surfaces admit potentially strictly semistable reduction, we can use their argument to prove the following.

\begin{theorem} \label{concab} Abelian surfaces $X/F$ admit potentially combinatorial reduction.
\end{theorem}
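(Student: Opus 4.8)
The plan is to follow the strategy used in \cite{LM14} for K3 surfaces (their Proposition 2.1), which reduces the existence of a minimal model to the existence of a semistable model together with a minimal model in the classical (relative minimal surface) sense, and to exploit the fact that abelian surfaces are especially well-behaved on both counts. First I would use the semistable reduction theorem for abelian varieties (Grothendieck): after a finite separable extension of $F$, the abelian surface $X$ acquires semistable reduction, so there is a semi-abelian scheme over $R$ extending $X$; from its (smooth, but possibly non-proper) total space one builds, after a further base change if necessary, a \emph{strictly} semistable model $\mathscr{X}/R$ of $X$ by the standard toroidal/combinatorial compactification procedure (this is where one appeals to the fact that potentially semistable reduction is genuinely \emph{known} here, unlike for K3's, so no conjectures are needed). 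This gives the analogue of "potentially strictly semistable reduction" that \cite{LM14} need as input.

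The second step is to upgrade such a strictly semistable model to a \emph{minimal} one in the sense of Definition \ref{minmod}, i.e.\ to arrange $\omega_{\mathscr{X}}^{\otimes m}\cong\mathcal{O}_{\mathscr{X}}$ (here $m=1$). Following the argument of \cite{LM14}, one runs a relative minimal model program for the semistable surface fibration $\mathscr{X}\to\operatorname{Spec}R$: since $X$ is a minimal surface of Kodaira dimension $0$ with trivial canonical bundle, the relative canonical divisor $K_{\mathscr{X}/R}$ is pseudo-effective and one contracts the relative $(-1)$-curves in the special fibre, staying in the category of regular algebraic spaces (this is exactly why the statement is about algebraic spaces and not schemes). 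Because $p>3$, the MMP for surfaces, and for semistable surface fibrations over a DVR, works as in characteristic $0$; one needs that the contractions preserve strict semistability up to possibly a further finite base change, which is handled by the usual analysis of the combinatorial structure of the special fibre (the results of \S\ref{snclv} and Lemma \ref{paac} describe exactly the configurations that can occur). At the end one obtains a strictly semistable $\mathscr{X}/R$ whose relative dualizing sheaf $\omega_{\mathscr{X}/R}=\omega_{\mathscr{X}}$ is trivial on the generic fibre and numerically trivial on the special fibre, hence (being a line bundle on a regular proper algebraic space whose restriction to each fibre is trivial, over a henselian base) actually trivial; here Proposition \ref{dualsheaf} identifies $\omega_Y$ with the logarithmic canonical sheaf, so $\omega_{\mathscr{X}}\cong\mathcal{O}_{\mathscr{X}}$ and $\mathscr{X}$ is minimal.

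The main obstacle I expect is the second step: controlling the relative MMP so that \emph{strict} semistability (not merely semistability) is preserved, and making sure the contractions can be carried out while remaining regular — in general one must pass to algebraic spaces and may need repeated finite base changes, and one has to check the process terminates. A secondary subtlety is verifying that numerical triviality of $\omega_{\mathscr{X}}$ on the closed fibre really does force $\omega_{\mathscr{X}}\cong\mathcal{O}_{\mathscr{X}}$ (as opposed to a nontrivial torsion line bundle); for abelian surfaces $m=1$ so this is cleaner than the Enriques/bielliptic cases, but it still uses that $\operatorname{Pic}^0$ of the total space over the henselian base $R$ is controlled by the abelian-scheme structure on the generic fibre. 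Once these points are in place, the theorem follows, and combined with Theorems \ref{abmain} and \ref{crass} it gives the promised description of the reduction type of an arbitrary proper (not necessarily smooth) model of an abelian surface in terms of the monodromy on $H^2$.
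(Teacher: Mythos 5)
Your overall strategy is the one the paper follows (the argument of \cite{LM14}, Proposition 2.1): first get a potentially strictly semistable model — your route via semi-abelian reduction plus toroidal compactification is exactly what Theorem 4.6 of \cite{Kun98} provides — and then modify it so that the (log) canonical sheaf becomes trivial. The genuine gap is in the second step. You assert that one can run the relative MMP on $\mathscr{X}\to\spec{R}$ by ``contracting relative $(-1)$-curves'' while ``staying in the category of regular algebraic spaces'' and preserving strict semistability up to finite base change. That is not how the semistable threefold MMP behaves, and this is precisely the difficulty the real proof has to confront: the MMP for semistable degenerations of surfaces (\cite{Kaw94}) proceeds by divisorial contractions and flips of the threefold total space, and its output $\mathscr{X}'$ is in general neither regular nor semistable — it has terminal $\Q$-factorial singularities at finitely many closed points of the special fibre, the special fibre need not be normal crossings there, and all one knows a priori is that $K_{\mathscr{X}'/R}$ is $\Q$-Cartier and relatively nef.

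Three further steps, absent from your sketch, are then needed. First, one shows $K_{\mathscr{X}'/R}=0$ exactly: since $K_X$ is trivial, $K_{\mathscr{X}'/R}=\sum_i a_iV_i$ is a combination of components of the special fibre, and using $\sum_i V_i\sim 0$ together with relative nefness (as in Lemma 4.7 of \cite{Mau14}) forces all $a_i=0$; this also disposes of your secondary worry, since one gets actual (not merely numerical) triviality at the level of divisors, whereas your ``numerically trivial on fibres over a henselian base implies trivial'' claim is not justified as stated. Second, $K_{\mathscr{X}'/R}$ being Cartier, Theorem 4.4 of \cite{Kaw94} shows the singular points are isolated rational double points. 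Third, these are resolved crepantly by a small resolution, which exists only as an algebraic space (Theorem 2.9.2 of \cite{Sai04} together with Theorem 2 of \cite{Art74}); since the resolution is an isomorphism outside codimension $\geq 2$, the relative canonical divisor of the resulting strictly semistable model $\mathscr{X}''$ remains trivial, giving minimality. Note that it is this last step — not $(-1)$-curve contractions — that forces the passage to algebraic spaces.
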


\begin{proof} By Theorem 4.6 of \cite{Kun98}, after replacing $F$ by a finite separable extension, we may assume that there exists a strictly semistable scheme model $\mathcal{X}/R$ of $X$. By applying the Minimal Model Program of \cite{Kaw94} there exists another scheme model $\mathcal{X}'$ for $X$ such that:
\begin{enumerate}
\item the components of the special fibre of $\mathcal{X}'$ are geometrically normal and integral $\Q$-Cartier divisors on $\mathcal{X}'$;
\item $\mathscr{X}'$ is regular away from a finite set $\Sigma$ of closed points on its special fibre, and $\mathcal{X}'$ has only terminal singularities at these points;
\item the special fibre is a normal crossings divisor on $\mathcal{X}\setminus \Sigma$;
\item the relative canonical Weil divisor $K_{\mathcal{X}'/R}$ is $\Q$-Cartier and n.e.f. relative to $R$.
\end{enumerate}
Now, since the canonical divisor $K_{X}$ on the generic fibre is trivial, it follows that we may write $K_{\mathscr{X}'/R}$ as a linear combination $\sum_i a_iV_i$ of the components of the special fibre $Y'$ of $\mathcal{X}'$. Moreover since $\sum_i V_i=0$ we may in fact assume that $a_i\leq 0$ for all $i$ and $a_i=0$ for some $i$. Since $K_{\mathscr{X}'/\Q}$ is n.e.f. relative to $R$, arguing as in Lemma 4.7 of \cite{Mau14} shows that in fact we must have $a_i=0$ for all $i$, and hence $K_{\mathscr{X}'/R}=0$. In particular it is Cartier (not just $\Q$-Cartier) and therefore applying Theorem 4.4 of \cite{Kaw94} we can see that in fact $\mathcal{X}'$ is strictly semistable away from a finite set of isolated rational double points on components of $Y'$. 

Finally, applying Theorem 2.9.2 of \cite{Sai04} and Theorem 2 of \cite{Art74} we may, after replacing $F$ by a finite separable extension, find a strictly semistable algebraic space model $\mathscr{X}''/R$ for $X$ and a birational morphism $\mathscr{X}''\rightarrow \mathscr{X}'$ which is an isomorphism outside a closed subset of each special fibre, of codimension $\geq 2$ in the total space. Since we know that $K_{\mathscr{X}'/R}=0$, it follows that $K_{\mathscr{X}''/R}=0$, and therefore $\mathscr{X}''$ is a minimal model in the sense of Definition \ref{minmod}.
\end{proof}

\begin{remark} Of course, this begs the question as to whether or not bielliptic surfaces admit potentially combinatorial reduction, we are not sure whether to expect this or not.
\end{remark}

Finally, we would like to relate the `type' of combinatorial reduction for abelian (and hence bielliptic) surfaces to the more traditional invariants associated to abelian varieties with semi-abelian reduction. So suppose that we have an abelian surface $X/F$. Then after a finite separable extension, we may assume that $X$ admits the structure of an abelian variety over $F$, let us therefore call it $A$ instead. After making a further extension, we may assume that $A$ has semi-abelian reduction, i.e. there exist a semi-abelian scheme over $R$ whose generic fibre is $A$. In this situation we have a `uniformisation cross' for $A$ (see for example \S2 of \cite{CI99}), which is a diagram
\[ \xymatrix{ & T\ar[d] & \\ \Gamma\ar[r] & G\ar[r]^{\pi} \ar[d] & A \\ & B } \]
where $T$ is a torus over $F$, $B$ is an abelian variety with good reduction, $G$ is an extension of $B$ by $T$ and $\Gamma$ is a discrete group. Fixing a prime $\ell\neq p$, the monodromy operator on $H^1_\et(A_{\overline{F}},\Q_\ell)$ can be defined as follows. We have an exact sequence
\[ 0\rightarrow \mathrm{Hom}(\Gamma,\Q_\ell)\rightarrow H^1_\et(A_{\overline{F}},\Q_\ell)\rightarrow H^1_\et(G_{\overline{F}},\Q_\ell)\rightarrow 0\]
and a non-degenerate pairing 
\[ \Gamma\times \mathrm{Hom}(T,\G_m)\rightarrow \Q \]
and the monodromy operator on $H^1_\et(A_{\overline{F}},\Q_\ell)$ is then the composition
\[ H^1_\et(A_{\overline{F}},\Q_\ell)\rightarrow H^1_\et(T_{\overline{F}},\Q_\ell)\rightarrow \mathrm{Hom}(T,\G_m)\otimes_{\Z}\Q_\ell\rightarrow \mathrm{Hom}(\Gamma,\Q_\ell)\rightarrow H^1_\et(A_{\overline{F}},\Q_\ell)\]
(see for example \cite{CI99}). Since the first map is surjective, the last injective, and all others are isomorphisms, we have that the dimension of the image of monodromy on $H^1_\et(A_{\overline{F}},\Q_\ell)$ is equal to the dimension of $H^1_\et(T_{\overline{F}},\Q_\ell)$, and therefore to the rank of $T$. Using some simple linear algebra, one can therefore give the nilpotency index of $N$ on $H^2_\et(A_{\overline{F}},\Q_\ell)=\bigwedge^2  H^1_\et(A_{\overline{F}},\Q_\ell)$ as follows:
\begin{enumerate}\item $\mathrm{rank}(T)=0\Rightarrow N=0$ on $ H^2_\et(A_{\overline{F}},\Q_\ell)$;
\item $\mathrm{rank}(T)=1 \Rightarrow N\neq 0$, $N^2=0$ on $ H^2_\et(A_{\overline{F}},\Q_\ell)$;
\item $\mathrm{rank}(T)=2 \Rightarrow N^2\neq 0$, $N^3=0$ on $ H^2_\et(A_{\overline{F}},\Q_\ell)$.
\end{enumerate}
Hence we have the following.
\begin{proposition} \label{typerank} $A$ has potentially combinatorial reduction of Type I,II or III as $\mathrm{rank}(T)$ is $0,1$ or $2$ respectively.
\end{proposition}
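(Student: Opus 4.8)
The plan is to chain together three facts that are already available: the existence of a minimal model after a finite extension (Theorem \ref{concab}), the dictionary of Theorem \ref{abmain} between the type of its special fibre and the nilpotency index of $N$ on $H^2$, and the linear-algebra computation with the uniformisation cross carried out immediately above, which relates that nilpotency index to $\mathrm{rank}(T)$.

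First I would pass to a finite separable extension $F'/F$ large enough that simultaneously $A_{F'}$ acquires semi-abelian reduction — so that the uniformisation cross and the displayed description of $N$ on $H^1_\et(A_{\overline{F}},\Q_\ell)$ apply — and, by Theorem \ref{concab}, $A_{F'}$ admits a minimal model $\mathcal{X}/R'$ in the sense of Definition \ref{minmod}. No conflict arises between the two required extensions, since both the toric rank of the semi-abelian reduction and the monodromy operator $N$ on $\ell$-adic cohomology are unchanged by a further finite base change; in particular $\mathrm{rank}(T)$ is a well-defined invariant of $A$.

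By Theorem \ref{abmain} applied to $\mathcal{X}/R'$, the special fibre $Y$ is combinatorial of Type I, II or III exactly as the nilpotency index of $N$ on $H^2_\et(A_{\overline{F}},\Q_\ell)$ is $1$, $2$ or $3$; and by the discussion preceding the statement this nilpotency index equals $\mathrm{rank}(T)+1$, since from the uniformisation cross the rank of $N$ on $H^1_\et(A_{\overline{F}},\Q_\ell)$ equals $\mathrm{rank}(T)$ (with $N^2=0$ there) and the elementary computation on $H^2=\bigwedge^2 H^1$ then pins down the nilpotency index. Assembling these gives the result. There is no real obstacle here beyond bookkeeping: the substantive content lies in Theorems \ref{concab} and \ref{abmain} and in the $\bigwedge^2$-computation, all of which precede the statement; the only point deserving a word of care is that the resulting ``type'' is independent of the choices of $F'$ and of $\mathcal{X}$, which is automatic because, by Theorem \ref{abmain}, it is read off from the intrinsic invariant $N$ on $H^2_\et(A_{\overline{F}},\Q_\ell)$, which does not change under passage to a further finite extension.
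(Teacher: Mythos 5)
Your proposal is correct and follows the paper's intended argument exactly: the proposition is obtained by chaining Theorem \ref{concab}, Theorem \ref{abmain}, and the uniformisation-cross computation identifying the nilpotency index of $N$ on $H^2_\et(A_{\overline{F}},\Q_\ell)=\bigwedge^2 H^1_\et(A_{\overline{F}},\Q_\ell)$ with $\mathrm{rank}(T)+1$, which is precisely the discussion the paper places immediately before the statement. Your extra remarks on the invariance of $N$ and of the toric rank under further finite base change only make explicit bookkeeping the paper leaves implicit.
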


\section{Towards higher dimensions}

In this final section of the article, we begin to investigate the shape of degenerations in higher dimensions, in particular looking at Calabi--Yau threefolds and concentrating on the `maximal intersection case', analogous to the Type III degeneration of K3 surfaces. In characteristic 0 some fairly general results in this direction are proved in \cite{KX15}, and the approach there provides much of the inspiration for the main result of this section, Theorem \ref{CY3}, as well as some of the key ingredients of its proof. Many of the proofs there rely on results from the log minimal model program (LMMP), which happily has recently been solved for threefolds in characteristics $>5$ by Hacon--Xu \cite{HX15}, Birkar \cite{Bir13} and Birkar--Waldron \cite{BW14}. Given these results, many of our proofs consist of working through special low dimensional cases of \cite{KX15} explicitly (and gaining slightly more information than given there), although there are certain places where specifically characteristic $p$ arguments are needed.

Since we will need to use the LMMP for threefolds, we will assume throughout that $p>5$. Unfortunately, since we will also need to know results on the homotopy type of Berkovich spaces, we will also need to assume that our models are in fact schemes, rather than algebraic spaces.

\begin{definition} A Calabi--Yau variety over $F$ is a smooth, projective, geometrically connected variety $X/F$ such that:
\begin{itemize}\item the canonical sheaf $\omega_X=\Omega^{\dim X}_{X/F}$ is trivial, i.e. $\omega_X\cong \mathcal{O}_X$; 
\item $X$ is geometrically simply connected, i.e. $\pi_1^{\et}(X_{\overline{F}},x)=\{1\}$ for any $x\in X(\overline{F})$;
\item $H^i(X,\mathcal{O}_X)=0$ for all $0<i<\dim X$.
\end{itemize}
\end{definition}

In dimension 2 these are exactly the K3 surfaces, and we will be interested in what we can say about degenerations of Calabi--Yau varieties in dimension 3. Here one expects to be able to divide `suitably nice' semistable degenerations into 4 `types' depending on the nilpotency index of $N$ acting on $H^3(X)$ (for some suitable Weil cohomology theory). In this section we will treat the `Type IV' situation.

\begin{definition} We say that a morphism $f:X\rightarrow S$ of algebraic varieties (over an algebraically closed field) is a Mori fibre space if it is projective with connected fibres, and the anticanonical divisor $-K_X$ is $f$-ample, i.e. ample on all fibres of $f$.
\end{definition}

\begin{definition} \label{ccyt4} Let $Y=\cup_i V_i$ be a simple normal crossings variety over $k$ of dimension 3. We say that $Y$ is a combinatorial Calabi--Yau of Type IV if geometrically (i.e. over $\overline{k}$) we have:
\begin{itemize}  \item each component $V_i$ is birational to a Mori fibre space over a unirational base;
\item  each connected component of every double surface $S_{ij}$ is rational;
\item each connected component of every triple curve $C_{ijk}$ is rational;
\item the dual graph $\Gamma$ of $Y$ is a triangulation of the 3-sphere $S^3$.
\end{itemize}
\end{definition}

\begin{remark} \begin{enumerate} \item It is worth noting that in characteristic $0$ these conditions imply that $V_i$ is rationally connected, and the analogue of the condition in dimension $2$ implies rationality, even in characteristic $p$.
\item We may in fact assume that we have the above shape after a finite extension of $k$.
\end{enumerate}
\end{remark}

Let $H^3(X)$ stand for either $H^3_\et(X_{\overline{F}},\Q_\ell)$ if $\mathrm{char}(F)=0$ or $\ell \neq p$, or $H^3_\rig(X/\rk)$ if $\mathrm{char}(F)=p$. In all cases, we have a natural monodromy operator $N$ acting on $H^3(X)$, such that $N^4=0$. As a first step in the study of Calabi--Yau degenerations in dimension 3, the main result of this section is the following.

\begin{theorem} \label{CY3} Let $\mathscr{X}$ be a strictly semistable $R$-scheme with generic fibre $X$ a Calabi--Yau threefold. Assume moreover that the sheaf of logarithmic $3$-forms $\omega_\mathscr{X}$ on $\mathscr{X}$ relative to $R$ is trivial, and that $N^3\neq 0$ on $H^3(X)$. Then the special fibre $Y$ of $\mathscr{X}$ is a combinatorial Calabi--Yau of Type IV. 
\end{theorem}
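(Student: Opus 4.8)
The plan is to check, in turn, the four conditions making up Definition~\ref{ccyt4}, the driving observation being that triviality of $\omega_{\mathscr{X}}$ makes $\mathscr{X}$ — and, by logarithmic adjunction (cf.\ Proposition~\ref{dualsheaf} and \cite{Tsu99a}), every stratum of $Y$ — into a logarithmic Calabi--Yau, so that the structure theory of Koll\'ar--Xu \cite{KX15} for log Calabi--Yau pairs applies once transported to characteristic $p>5$ via the threefold minimal model program of \cite{HX15}, \cite{Bir13} and \cite{BW14}. Write $Y=\bigcup_i V_i$ and $Y^{(0)}=\coprod_i V_i$, $Y^{(1)}=\coprod S_{ij}$, $Y^{(2)}=\coprod C_{ijk}$, $Y^{(3)}=\coprod P_{ijkl}$ for the strata, so that the connected components of $Y^{(s)}$ are the $s$-simplices of the dual complex $\Gamma$. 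The first task is to deduce $H^3(\Gamma)\ne 0$ from $N^3\ne 0$. I would use the weight spectral sequence abutting to $H^3(X)$ — the Rapoport--Zink one when $\ell\ne p$, its $p$-adic analogue of Mokrane otherwise, which abuts correctly to $H^3_\rig(X/\rk)$ by Proposition~\ref{cstpas} when $\mathrm{char}(F)=p$. Its terms $E_1^{s,t}$ are pure of weight $t$, so it degenerates at $E_2$ and $\mathrm{gr}^W_w H^3(X)=E_2^{3-w,\,w}$; in particular $\mathrm{gr}^W_0 H^3(X)=E_2^{3,0}=H^3(\Gamma)$, the weight-zero row of $E_1$ being the simplicial cochain complex of $\Gamma$. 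Since $N$ has bidegree $(2,-2)$ on the spectral sequence it lowers the weight filtration by $2$; hence $N^3$ kills $W_5 H^3(X)$ and induces a nonzero map $\mathrm{gr}^W_6 H^3(X)\to W_0 H^3(X)$, so $H^3(\Gamma)\ne 0$. (This uses only $E_2$-degeneration and the compatibility of $N$ with the weight filtration, not the weight--monodromy conjecture.) In particular $\dim\Gamma=3$, so $Y$ carries a quadruple point.

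Next I would record the consequences of logarithmic adjunction: $K_{V_i}+D_i\sim 0$ with $D_i=\sum_{j\ne i}S_{ij}$ on each component, $K_{S_{ij}}+\sum_k C_{ijk}\sim 0$ on each double surface, and $K_{C_{ijk}}+\sum_l P_{ijkl}\sim 0$ on each triple curve; the last identity, as a degree formula, forces each connected component of a triple curve to be either a rational curve containing exactly two quadruple points, or a smooth elliptic curve containing none. Now, applying Koll\'ar--Xu's description of the dual complex of a log Calabi--Yau pair — run in characteristic $p>5$ via \cite{HX15}, \cite{Bir13}, \cite{BW14} — the complex $\Gamma$ is a pseudomanifold, and, by the analysis of its links carried out there, a closed $3$-manifold. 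Being a $3$-dimensional pseudomanifold, every $2$-simplex of $\Gamma$ lies on exactly two $3$-simplices, i.e.\ every triple curve contains exactly two quadruple points and is therefore rational; being a manifold, the link of each edge of $\Gamma$ is a circle, so on each connected component of each double surface the triple curves it contains (which are nonempty, since every edge of $\Gamma$ lies on a triangle) form a cycle of rational curves. Feeding this into Lemma~\ref{paac} applied to $(S_{ij},\sum_k C_{ijk})$ — cases (1)--(3) being excluded as the boundary has no elliptic component — one gets that each connected component of $S_{ij}$ is rational, with double locus a cycle. This settles the second and third bullets of Definition~\ref{ccyt4}.

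For the components, fix $i$: then $(V_i,D_i)$ is log Calabi--Yau with $D_i\ne 0$ and simple normal crossings, so $-K_{V_i}=D_i$ is a nonzero effective divisor, $K_{V_i}$ is not pseudo-effective, and a $K_{V_i}$-MMP (legitimate in dimension $3$, characteristic $p>5$) ends in a Mori fibre space $V_i'\to W_i$ with $\dim W_i\le 2$. Each MMP step being $K_{V_i}$-negative, hence (as $K_{V_i}+D_i\sim 0$) not contracting $D_i$, the strict transform $D_i'$ is a nonzero divisor with $K_{V_i'}+D_i'\sim 0$; so $-K_{V_i'}=D_i'$ is ample on the fibres of $V_i'\to W_i$, forcing $D_i'$ to have a component dominating $W_i$. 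Since the strict transform of a component of $D_i$ under a threefold MMP stays birational to it, that component is birational to one of the rational double surfaces above, whence $W_i$ is unirational — the first bullet of Definition~\ref{ccyt4}. It remains to identify $\Gamma$: the covering-space part of \cite{KX15} — here using crucially that $\mathscr{X}$ is a scheme, so that finite covers of $\Gamma$ arise from finite covers of $\mathscr{X}$ that are \'etale over $X$ — shows $\pi_1(\Gamma)$ is finite, hence a quotient of $\pi_1^{\et}(X_{\overline{F}})=\{1\}$ and so trivial; a simply connected closed $3$-manifold has the homology of $S^3$ by Poincar\'e duality, so is a homotopy $3$-sphere, so is homeomorphic to $S^3$ by the Poincar\'e conjecture. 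Assembling the three steps, $Y$ is a combinatorial Calabi--Yau of Type IV.

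The hard part is the input drawn from \cite{KX15} in characteristic $p$: the pseudomanifold/closed-manifold statement for the dual complex of a maximal-intersection log Calabi--Yau, together with the finiteness of $\pi_1(\Gamma)$. These are precisely where the threefold MMP in characteristic $p>5$ and the description of the homotopy type of the Berkovich space of $X$ are needed, the latter being what forces the restriction to models that are schemes. A secondary technical point is keeping the boundary $D_i$ uncontracted along the $K_{V_i}$-MMP with its components still birational to rational surfaces, and the appeal to the Poincar\'e conjecture in dimension $3$.
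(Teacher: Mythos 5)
Your opening step (weight spectral sequence, $E_2$-degeneration, $N^3\neq 0\Rightarrow W_0H^3\neq 0\Rightarrow$ existence of a quadruple point) and your closing step (simply connected closed $3$-manifold, Poincar\'e conjecture) match the paper, and the MMP argument for the first bullet of Definition~\ref{ccyt4} is essentially the one used there. But the core of your argument has a genuine gap: you invoke ``Koll\'ar--Xu's description of the dual complex of a log Calabi--Yau pair, run in characteristic $p>5$'' to get that $\Gamma$ is a pseudomanifold and a closed $3$-manifold, and the ``covering-space part of \cite{KX15}'' to get that $\pi_1(\Gamma)$ is finite. Neither statement is available off the shelf in characteristic $p$: the topological results of \cite{KX15} on dual complexes (maximality propagating to all strata, the manifold structure, and especially the finiteness of $\pi_1$ of the dual complex) are characteristic-$0$ theorems whose proofs use MMP tools well beyond what \cite{Bir13, BW14} provide, and establishing the needed special cases in characteristic $p$ is precisely the content of the paper's proof. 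Concretely, the paper first proves Proposition~\ref{divcon} (connectedness of the boundary $D_i$, via Birkar's connectedness principle and the existence of Mori fibre spaces for threefolds in characteristic $p>5$), deduces from it and Lemma~\ref{paac} that \emph{every} component, hence every double surface, contains a quadruple point, concludes rationality of all $S_{ij}$ and $C_{ijk}$ by Lemma~\ref{paac}, then shows (as in Theorem~\ref{crass}) that the dual graph of each $D_i$ is a closed $2$-manifold, so that the star of each vertex of $\Gamma$ is a cone over it and $\Gamma$ is a closed $3$-manifold. Your order of deduction is reversed --- you derive rationality of the strata \emph{from} the manifold structure of $\Gamma$ --- so the unproved input cannot be patched around: without the characteristic-$p$ substitute (Proposition~\ref{divcon} and its corollary) your second and third bullets of Definition~\ref{ccyt4} are unsupported.

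The simple-connectedness step has the same problem. The paper does not use any $\pi_1$-finiteness from \cite{KX15}; instead it uses Berkovich's theorem (Theorem 8.2 of \cite{Ber99}) that for a polystable formal scheme the analytic generic fibre deformation retracts onto the dual complex --- this is where the hypothesis that $\mathscr{X}$ is a scheme enters --- together with de Jong's comparison of \'etale and topological fundamental groups \cite{dJ95a}, to conclude only that the \emph{profinite completion} of $\pi_1(\Gamma)$ is trivial; triviality of $\pi_1(\Gamma)$ itself then needs Hempel's residual finiteness of $3$-manifold groups \cite{Hem87}. Your sketch (``finite covers of $\Gamma$ arise from finite covers of $\mathscr{X}$ \'etale over $X$, so $\pi_1(\Gamma)$ is a quotient of $\pi_1^{\et}(X_{\overline{F}})$'') needs exactly this Berkovich/de Jong machinery to be made precise, and your appeal to finiteness of $\pi_1(\Gamma)$ from \cite{KX15} is again a characteristic-$0$ statement; replacing it by residual finiteness of $3$-manifold groups is the missing ingredient.
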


As before, we will only treat the case $\mathrm{char}(F)=0$ and $\ell\neq p$, the others are handled identically. We may also assume that $k=\overline{k}$. Let $V_i$ denote the components of $Y$, $S_{ij}$ the double surfaces, $C_{ijk}$ the triple curves and $P_{ijkl}$ the quadruple points. Write $Y^{(0)}=\CMcoprod_i V_i$, $Y^{(1)}=\CMcoprod_{ij} S_{ij}$ et cetera. The only point where the hypothesis on the nilpotency index of $N$ is used is to prove the following lemma.

\begin{lemma} Suppose that $N^3\neq 0$. Then $Y$ has `maximal intersection', i.e. there exists a quadruple point $P_{ijkl}$.  
\end{lemma}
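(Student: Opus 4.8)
The plan is to use the weight spectral sequence together with the weight-monodromy conjecture (known in dimension $\leq 3$ in equicharacteristic, and in the $\ell \neq p$, $\mathrm{char}(F)=0$ case we are treating) to translate the hypothesis $N^3 \neq 0$ on $H^3(X)$ into a statement about the combinatorics of $Y$. Recall that the weight spectral sequence takes the form
\[ E_1^{s,t} = \bigoplus_{j\geq \max\{0,-s\}} H^{t-2j}_\et(Y^{(s+2j)}_{\overline{k}},\Q_\ell)(-j)\Rightarrow H^{s+t}_\et(X_{\overline{F}},\Q_\ell), \]
degenerating at $E_2$, and that $N^r$ induces an isomorphism $E_2^{-r,w+r}\isomto E_2^{r,w-r}$. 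Applied with $s+t=3$, the condition $N^3\neq 0$ forces $E_2^{-3,6}\neq 0$ (equivalently $E_2^{3,0}\neq 0$). The term $E_2^{3,0}$ is computed from the top row of the $E_1$-page, namely the complex $H^0(Y^{(0)})\to H^0(Y^{(1)})\to H^0(Y^{(2)})\to H^0(Y^{(3)})\to 0$ (with appropriate Tate twists and signs), which is precisely the simplicial cochain complex computing the singular cohomology $H^\bu_{\mathrm{sing}}(\Gamma,\Q_\ell)$ of the dual complex $\Gamma$ of $Y$. Thus $E_2^{3,0}\cong H^3_{\mathrm{sing}}(\Gamma,\Q_\ell)$.

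Next I would observe that $\Gamma$ is a simplicial complex of dimension at most $3$, and it has a nonzero $3$-dimensional cochain group $C^3(\Gamma,\Q_\ell)=H^0(Y^{(3)},\Q_\ell)$ exactly when $Y^{(3)}\neq\emptyset$, i.e. exactly when there is a quadruple point $P_{ijkl}$. Since $H^3_{\mathrm{sing}}(\Gamma,\Q_\ell)$ is a subquotient of $C^3(\Gamma,\Q_\ell)$, the nonvanishing $E_2^{3,0}\neq 0$ immediately forces $C^3(\Gamma,\Q_\ell)\neq 0$, hence $Y^{(3)}\neq \emptyset$, which is exactly the assertion that $Y$ has maximal intersection. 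This is the heart of the argument and it is genuinely short once the identification of the top row of the weight spectral sequence with simplicial cochains of $\Gamma$ is in place.

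The main obstacle, and the point requiring the most care, is justifying the two inputs: first, that the relevant instance of the weight-monodromy conjecture (giving the isomorphism $N^3: E_2^{-3,6}\isomto E_2^{3,0}$, or at least the weaker statement that $N^3\neq 0$ on $H^3$ implies $E_2^{3,0}\neq 0$) is available in our setting — this is standard for $\ell$-adic cohomology of threefolds via the results quoted in the body (analogous to the use of \cite{Nak06} in the proof of Theorem \ref{k3main}), and in equicharacteristic follows from the $p$-adic weight spectral sequence as in the proof of Theorem \ref{abmain}; and second, the identification of the edge complex $E_1^{\bu,0}$ with the simplicial cochain complex of $\Gamma$, which is a purely formal check using that $H^0$ of a disjoint union of connected varieties is the free module on the set of components and that the differentials are the alternating-sum maps $\partial_t^{s*}$ coming from the combinatorics of intersections, matching the coboundary maps of $\Gamma$. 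Modulo these, the lemma follows; so the proof is essentially a bookkeeping argument and I expect no serious difficulty beyond correctly citing the weight-monodromy input.
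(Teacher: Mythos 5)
Your combinatorial endgame is exactly right and matches the paper: $E_2^{3,0}$ is a subquotient of $E_1^{3,0}=H^0_\et(Y^{(3)}_{\overline{k}},\Q_\ell)$, so its nonvanishing forces $Y^{(3)}\neq\emptyset$, i.e. the existence of a quadruple point. The gap is in the input you use to conclude $E_2^{3,0}\neq 0$. You invoke the weight-monodromy conjecture, asserting it is ``standard'' for threefolds in the $\mathrm{char}(F)=0$, $\ell\neq p$ case under consideration; it is not. In mixed characteristic it is known only in dimension $\leq 2$ (which is all that the appeal to \cite{Nak06} in the proof of Theorem \ref{k3main} uses), and beyond that only in special cases such as complete intersections (\cite{Sch12}), while the equicharacteristic case follows from \cite{Ito05} and \cite{LP16}. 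This is precisely why Theorem \ref{CY3} is stated in one direction only and why the converse proposition at the end of the paper must take the weight-monodromy conjecture as a hypothesis. So, as written, your justification rests on an input that is not available in the stated generality.

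Fortunately the repair is immediate, and it is exactly what the paper's proof does (see also the remark following it): you do not need the isomorphism $N^3:E_2^{-3,6}\isomto E_2^{3,0}$, only the compatibility of $N$ with the weight filtration, $N(W_i)\subset W_{i-2}$, which comes for free from the morphism of spectral sequences $N:E_r^{s,t}\rightarrow E_r^{s+2,t-2}$. If $Y^{(3)}=\emptyset$ then $E_1^{3,0}=0$, so $E_2^{3,0}=\mathrm{Gr}^W_0H^3=W_0=0$ (as $W_{-1}=0$), while $N^3(H^3)=N^3(W_6)\subset W_0$; hence $N^3=0$, and the lemma follows by contraposition. This is the ``weaker statement'' you mention in passing, but note that it requires no weight-monodromy--type result at all; with that substitution your argument coincides with the paper's.
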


\begin{proof} If there is no quadruple point $P_{ijkl}$ then $Y^{(3)}=\emptyset$. Let $W_n$ denote the weight filtration on $H^3_\et(X_{\overline{F}},\Q_\ell)$, so that $W_{-1}=0$ and $W_6=H^3_\et(X_{\overline{F}},\Q_\ell)$. The monodromy operator $N^3$ sends $W_i$ into $W_{i-6}$, in particular $N^3(H^3_\et(X_{\overline{F}},\Q_\ell))\subset W_0$. But $Y^{(3)}=\emptyset$ implies that $W_0=0$ and hence $N^3=0$.
\end{proof}

Note that we do not need to know the weight-monodromy conjecture in order for the lemma to hold, we simply need to know compatibility of $N$ with the weight filtration.

For each $i$ we will let $D_i=\sum_{j\neq i} S_{ij}$, so that by the assumption $\omega_\mathcal{X}\cong \mathcal{O}_{\mathcal{X}}$ and the adjunction formula we have $-K_{V_i}=D_i$ for all $i$. Similarly setting $E_{ij}=\sum_{k\neq i,j} C_{ijk}$ we obtain $-K_{S_{ij}}=E_{ij}$ and setting $F_{ijk}=\sum_{l\neq ijk} P_{ijkl}$ we can see that $-K_{C_{ijk}}=F_{ijk}$. The lemma shows that there exists some $V_{i}$ containing a quadruple point, and the first key step in proving Theorem \ref{CY3} is showing that this is actually true for every $i$. The main ingredient in this is the following.

\begin{proposition} \label{divcon} Let $(V,D)$ be a pair consisting of a smooth projective threefold $V$ over $\overline{k}$ and a non-empty strict normal crossings divisor $D\subset V$. Assume that $K_V+D=0$, and that $D$ is disconnected. Then $D$ consists of two disjoint irreducible components $D_1$ and $D_2$.
\end{proposition}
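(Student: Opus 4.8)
The plan is to follow the strategy of Koll\'ar--Xu \cite{KX15}, specialised to threefolds; all of the minimal model program input one needs here (the cone and contraction theorems, termination of flips for dlt threefold pairs with scaling, and the existence of Mori fibre spaces) is available in characteristic $p>5$ by \cite{HX15}, \cite{Bir13} and \cite{BW14}. Since $D$ is disconnected I would write $D=A+B$, where $A$ is a single connected component and $B=D-A$ is the (reduced, effective, nonzero) union of the remaining components; thus $A$ and $B$ have disjoint support. The pair $(V,A)$ is log smooth, and $K_V+A=-B$ is not pseudo-effective because $B$ is effective and nonzero, so the $(K_V+A)$-MMP terminates at a Mori fibre space $\phi\colon V'\to Z$. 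Writing $A',B'$ for the strict transforms, one still has $K_{V'}+A'+B'\sim 0$ (each step is an isomorphism in codimension one or a divisorial contraction, and pushing forward kills the class $K_V+D$), and $-(K_{V'}+A')=B'$ is $\phi$-ample. Two bookkeeping points: a curve contained in $A$ meets $B$ trivially (as $A\cap B=\emptyset$), hence is $(K_V+A)$-trivial and never appears in a contracted or flipped ray, so $A'$ is again connected and birational to $A$; and $B'$ is nonzero (if all of $B$ were contracted then $-(K_{V'}+A')=0$ would be nef, contradicting it being $\phi$-ample on a Mori fibre space), in fact ample on every fibre of $\phi$.

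I would then restrict to a general fibre $G$ of $\phi$ and split into cases on $\dim Z$. If $\dim Z=2$ then $G\cong\P^1$, the divisor $(A'+B')|_G$ has degree $2$, and $B'|_G$ is a nonzero ample sub-divisor; if $A'|_G\neq 0$ as well then each of $A',B'$ restricts to a single point, so $A'$ and $B'$ are (generically) sections of $\phi$, whence $D=A+B$ is a union of exactly two irreducible divisors. If $\dim Z\le 1$, then $G$ is a surface, $(G,(A'+B')|_G)$ is a log Calabi--Yau surface with nonempty boundary, and Lemma \ref{paac} classifies this boundary; since $B'|_G$ is ample it cannot be a ruling on an elliptic ruled surface, which eliminates the only disconnected configuration appearing in Lemma \ref{paac}, forcing $A'|_G=0$. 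Thus $A'$ is $\phi$-vertical, and being connected it is a single fibre; I would then rerun the component-by-component adjunction argument from the proof of Theorem \ref{crass} on the surface $A'$ — using $\omega_{A'}\cong\mathcal O_{A'}$, the spectral sequence of Lemma \ref{sslc}, and Lemma \ref{paac} — to conclude that $A'$, hence $A$, is irreducible, and symmetrically that the remaining divisor $B$ is a single prime divisor. In every case one lands on exactly two disjoint irreducible components, as claimed.

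The steps I expect to be the main obstacle are the following two, which are intertwined: after the MMP the variety $V'$ is only $\Q$-factorial with terminal (or at worst klt) singularities rather than smooth, so the adjunction to the fibres of $\phi$ and the identity $K_{V'}+D'\sim 0$ have to be handled with the appropriate different and a little care about which strata meet the singular locus; and, relatedly, one must genuinely rule out the scenario in which the connected component $A$ degenerates to a \emph{reducible} special fibre of $\phi$. It is precisely here that the surface classification of Lemma \ref{paac}, combined with the relative ampleness of the complementary divisor $B'$, does the essential work, and this is the part of the argument I would expect to need the most attention to detail.
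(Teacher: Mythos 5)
Your overall strategy --- run an MMP that makes the complementary part of the boundary relatively ample on a Mori fibre space and then argue on the fibres --- is the same circle of ideas as the paper's proof, which runs the plain $K_V$-MMP (using \cite{BW14}) and then follows Proposition 4.37 of \cite{Kol13}. But there is a genuine missing ingredient: you never prove that $A'$ and $B'$ remain disjoint (equivalently, that the boundary stays disconnected) along the $(K_V+A)$-MMP. Your observation that curves inside $A$ are $(K_V+A)$-trivial only guarantees that no component of $A$ is contracted and that $A'$ is connected; it does not prevent a divisorial contraction or flip from contracting curves that meet both $A$ and $B$ (every contracted curve $C$ satisfies $B\cdot C>0$, and nothing forbids $A\cdot C>0$ as well), after which the strict transforms of $A$ and $B$ can very well intersect. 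The paper secures exactly this point with the connectedness principle (Theorem 1.8 of \cite{Bir13}), applied along the MMP; without it, your use of Lemma \ref{paac} to ``eliminate the only disconnected configuration'', your exclusion of vertical components, and ultimately the count of components all lose their justification.

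There are also concrete errors in the case analysis. The case $\dim Z=2$ with $A'$ vertical over $Z$ is simply not treated; when $\dim Z=0$ the general ``fibre'' is the whole threefold $V'$, so Lemma \ref{paac} cannot be applied to it; and in the $\dim Z\le 1$ branch the assertion $\omega_{A'}\cong\mathcal{O}_{A'}$ is false, since adjunction gives $(K_{V'}+A')|_{A'}\equiv -B'|_{A'}$, which is anti-ample because $B'$ is $\phi$-ample and $A'$ lies in a fibre --- and even if it were trivial, rerunning the argument of Theorem \ref{crass} would only classify $A'$ as a cycle-type degenerate surface, not prove it irreducible. In fact, once disjointness is in place, relative ampleness (positivity on every curve in every fibre, hence on every component of every fibre) rules out all of these vertical and low-dimensional-base scenarios at a stroke; this is how the paper's argument, in which both surviving components become relatively ample sections of a conic bundle, concludes. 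Finally, be careful passing back to $V$: components of $B$ can perfectly well be contracted by your MMP (the contracted rays are precisely the $B$-positive ones), so irreducibility of $B'$ does not by itself give irreducibility of $B$; ``symmetrically'' has to mean an honest second MMP run with boundary $B$, or the connectedness-principle bookkeeping by which the paper reduces the whole statement to the Mori fibre space end.
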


\begin{remark} The corresponding result for surfaces follows from Lemma \ref{paac}.
\end{remark}

\begin{proof} The characteristic 0 version of this result is Proposition 4.37 of \cite{Kol13}. However, thanks to the proof of the Minimal Model Program for threefolds in characteristic $p>5$, in particular the connectedness principle and the existence of Mori fibre spaces in \cite{Bir13,BW14} the same proof works here. So we will run the MMP on the smooth 3-fold $V$. It follows from Theorem 1.7 of \cite{BW14} that this terminates in a Mori fibre space $p:V^*\rightarrow S$, and by the connectedness principle (Theorem 1.8 of \cite{Bir13}) it suffices to prove that the strict transform $D^*\subset V^*$ consists of 2 irreducible components. Now we simply follow the proof of Proposition 4.37 of \cite{Kol13}, which goes as follows.

We know that there exists some component $D_1^*\subset D^*$ which positively intersects the ray contracted by $p$. Choose another component $D_2^*\subset D^*$ disjoint from $D^*_1$, and choose some fibre $F_s$ of $p$ meeting $D_2^*$. Since $D_2^*$ is disjoint from $D_1^*$, it follows that it cannot contain $F_s$, and hence intersects $F_s$ positively. Hence both $D_1^*$ and $D_2^*$ are $p$-ample, intersecting the contracted ray positively. Hence the generic fibre of $p$ is of dimension $1$, and is a regular (not necessarily smooth) Fano curve. It then follows that if we choose a general fibre $F_g$ of $p$, then $D_i^*\cdot F_g=1$ for $i=1,2$ and all other components of $D^*$ are $p$-vertical, hence trivial as claimed.
\end{proof}

\begin{corollary} Every component of $Y$ contains a quadruple point.
\end{corollary}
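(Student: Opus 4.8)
The plan is to show that the set $Z$ of components of $Y$ that contain a quadruple point is \emph{all} of them, by propagating membership of $Z$ along the dual graph $\Gamma$ of $Y$. By Lemma \ref{gcggcs} the scheme $Y$ is geometrically connected, so (having reduced to $k=\overline{k}$) the graph $\Gamma$ is connected, and the preceding lemma gives $Z\neq\emptyset$. It therefore suffices to prove: if $V_i\in Z$ and $V_j$ meets $V_i$ along a double surface $S_{ij}$, then $V_j\in Z$.

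First I would record that for $V_i\in Z$ the anticanonical divisor $D_i=-K_{V_i}$ is connected. Indeed, by Proposition \ref{divcon} the only alternative is that $D_i$ is a disjoint union of two irreducible surfaces; but since every component of $Y$ is smooth, such a $D_i$ has no point lying on three of its components, so $V_i$ would contain no quadruple point. Being connected with at least two components, $D_i$ then has $E_{ij}=\sum_{k\neq i,j}C_{ijk}\neq 0$ for each of its components $S_{ij}$, and by strict semistability $E_{ij}$ is a simple normal crossings divisor of smooth curves on the smooth projective surface $S_{ij}$, with $-K_{S_{ij}}=E_{ij}$ (as already recorded above, using $\omega_\mathscr{X}\cong\mathcal{O}_\mathscr{X}$). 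Thus Lemma \ref{paac} applies to $(S_{ij},E_{ij})$ for every $j$.

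The heart of the argument is the dichotomy furnished by Lemma \ref{paac}: for a component $S_{ij}$ of $D_i$, either $E_{ij}$ is a cycle of at least two smooth rational curves, in which case two of the curves $C_{ijk}$ meet, necessarily at a quadruple point of $Y$ lying on $S_{ij}\subset V_j$, so that $V_j\in Z$ and we are done; or $E_{ij}$ is a disjoint union of smooth elliptic curves, in particular no two of the $C_{ijk}$ meet on $S_{ij}$. I would rule out the second possibility. Pick a quadruple point $Q=P_{iabc}$ on $V_i$; on the component $S_{ia}$ the two distinct triple curves $C_{iab}$ and $C_{iac}$ meet at $Q$, so $E_{ia}$ has two components meeting. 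Now suppose $E_{ij}$ were a disjoint union of smooth elliptic curves, and take a path $S_{ij}=S_{in_0},S_{in_1},\dots,S_{in_t}=S_{ia}$ without repeated vertices in the connected dual complex of $D_i$. By induction on $s$, $E_{in_s}$ is a disjoint union of smooth elliptic curves: for $s=0$ this is our assumption, and given it for $s$, the triple curve $C_{in_sn_{s+1}}$ is a component of $E_{in_s}$, hence a smooth elliptic curve, and it is also a component of $E_{in_{s+1}}=-K_{S_{in_{s+1}}}$, so Lemma \ref{paac} — whose rational-cycle case is excluded once an elliptic component is present — forces $E_{in_{s+1}}$ to be a disjoint union of smooth elliptic curves as well. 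Taking $s=t$ yields that $E_{ia}$ is a disjoint union of smooth elliptic curves, contradicting the fact that two of its components meet at $Q$. Hence the first alternative of the dichotomy occurs, and $V_j\in Z$.

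Granting this propagation step, $Z$ is a non-empty union of connected components of $\Gamma$, hence all of $\Gamma$, which is the corollary. The only genuine obstacle is the propagation step of the previous paragraph — eliminating the configuration in which all double curves on $S_{ij}$ are elliptic — and it is exactly there that Lemma \ref{paac} is combined with connectedness of $D_i$ (via Proposition \ref{divcon}); everything else is bookkeeping with adjunction and the triviality of $\omega_\mathscr{X}$.
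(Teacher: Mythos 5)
Your proof is correct and follows essentially the same route as the paper: Proposition \ref{divcon} to get connectedness of $D_i$, the dichotomy of Lemma \ref{paac} applied to $(S_{ij},E_{ij})$, and propagation along the (connected) dual complexes of $D_i$ and of $Y$. The only difference is organisational — you run an induction along a path in $D_i$ pushing the ``all double curves elliptic'' case towards the surface known to contain a quadruple point, whereas the paper propagates the quadruple point outwards one double surface at a time — but the underlying argument is the same.
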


\begin{proof} By connectedness of $Y$ it suffices to show that each neighbour of $V_{i}$ also contains a quadruple point. Note that by Proposition \ref{divcon} the divisor $D_{i}$ is connected, by hypothesis there exists a double surface $S_{ij}$ in $D_i$ containing a quadruple point, and hence it suffices to show that each double surface $S_{ik}$ meeting $S_{ij}$ contains a quadruple point. But if not, then $C_{ijk}$ would form a connected component of $E_{ij}$ and hence again applying Lemma \ref{paac} we would see that $S_{ij}$ could not contain a quadruple point. Therefore $S_{ik}$ must contain a quadruple point, and we are done.
\end{proof}

Of course this also shows that each double surface $S_{ij}$ contains a quadruple point, hence by repeatedly applying Lemma \ref{paac} we can conclude that each surface $S_{ij}$ and each curve $C_{ijk}$ is rational. We may therefore see as in the proof of Theorem \ref{crass} that the dual graph of each $D_i$ is a closed 2-manifold. Moreover, applying the MMP to each $V_i$ produces a Mori fibre space $W_i\rightarrow Z_i$, such that the divisor $D_i=\sum_{j\neq i}S_{ij}$ dominates $Z_i$. Therefore $V_i$ has the form described in Definition \ref{ccyt4}.

Finally, to show that the dual graph $\Gamma$ is a 3-sphere, we consider, for every vertex $\gamma$, corresponding to a component $V_i$ of $Y$, the `star' of $\gamma$, i.e. the subcomplex of $\Gamma$ consisting of those cells meeting $\gamma$. This is a cone over the dual graph of $D_i$, hence $\Gamma$ is a closed 3-manifold. 

\begin{proposition} The dual graph $\Gamma$ is simply connected.
\end{proposition}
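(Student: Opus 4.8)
The plan is to read off the triviality of $\pi_1(\Gamma)$ from the homotopy type of the Berkovich analytification $X\an$ of $X$ over $F$. The starting observation is that we have already shown $\Gamma$ to be a connected \emph{closed} topological $3$-manifold, so $\pi_1(\Gamma)$ is a finitely presented $3$-manifold group; by Hempel's theorem (together with the geometrization theorem) such groups are residually finite. Hence in order to prove $\pi_1(\Gamma)=\{1\}$ it suffices to prove that $\pi_1(\Gamma)$ has no non-trivial finite quotient, that is, that its profinite completion is trivial.

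The key input is then Berkovich's description of the homotopy type of $X\an$. Since $\mathscr{X}/R$ is a proper, strictly semistable \emph{scheme} --- this is exactly the point at which the scheme hypothesis is needed, as flagged in the introduction --- the space $X\an$ admits a strong deformation retraction onto its skeleton $S(\mathscr{X})$, and $S(\mathscr{X})$ is canonically homeomorphic to the geometric realisation of the dual complex $\Gamma$ (Berkovich; see also the later accounts of Musta\c{t}\u{a}--Nicaise and Nicaise--Xu). In particular $\pi_1(\Gamma)\cong\pi_1(X\an)$, the topological fundamental group of the Berkovich space.

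To conclude I would compare $\pi_1(X\an)$ with $\pi_1^{\et}(X_{\overline{F}})$. Using de Jong's tempered fundamental group, $\pi_1(X\an)$ is a discrete quotient of $\pi_1^{\mathrm{temp}}(X\an)$, whose profinite completion is the geometric étale fundamental group (non-archimedean GAGA identifies finite étale covers of the proper space $X\an$ with those of $X$, while the topological $\pi_1$ is insensitive to $\mathrm{Gal}(\overline{F}/F)$). Consequently the profinite completion of $\pi_1(X\an)$ is a quotient of $\pi_1^{\et}(X_{\overline{F}})$. But $X$ is a Calabi--Yau threefold, hence geometrically simply connected, so $\pi_1^{\et}(X_{\overline{F}})=\{1\}$; therefore $\pi_1(\Gamma)$ has trivial profinite completion, and combined with the first paragraph this gives $\pi_1(\Gamma)=\{1\}$.

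The step I expect to be the main obstacle is making this last comparison precise and correctly attributed: one must be careful that it is the \emph{geometric} étale fundamental group $\pi_1^{\et}(X_{\overline{F}})$, rather than the arithmetic $\pi_1^{\et}(X)$, that bounds the finite quotients of $\pi_1(X\an)$, and that passing from $F$ to a completed algebraic closure alters neither the skeleton $S(\mathscr{X})$ nor the topological fundamental group. By contrast, the Berkovich retraction and the reduction to ``no non-trivial finite quotients'' via residual finiteness of $3$-manifold groups are essentially off-the-shelf. (The proposition then combines with the Poincaré--Perelman theorem to identify $\Gamma$ with $S^3$, as required for Definition \ref{ccyt4}.)
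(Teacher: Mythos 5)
Your argument is essentially the paper's own proof: identify $\pi_1(\Gamma)$ with the topological fundamental group of the Berkovich analytification via the skeleton/dual-complex retraction (after passing to $\C_p$, where the paper uses polystability and Berkovich's Theorem 8.2), kill its profinite completion using de Jong's comparison with the geometric \'etale fundamental group and simple connectedness of the Calabi--Yau $X$, and conclude by residual finiteness of $3$-manifold groups (Hempel). The only cosmetic differences are that the paper works with de Jong's \'etale fundamental group of the analytic space rather than the tempered group, and cites \cite{Hem87} directly for the residual finiteness step.
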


\begin{proof} Let $\C_p$ denote the completion of the algebraic closure of $F$, and $\mathcal{O}_{\C_p}$ its ring of integers. Let $\frak{X}$ denote the base change to $\mathcal{O}_{\C_p}$ of the $\pi$-adic completion of $\mathscr{X}$, this $\frak{X}$ is polystable over $\mathcal{O}_{\C_p}$ in the sense of Definition 1.2 of \cite{Ber99}. Let $X_{\C_p}^\mathrm{an}$ denote the generic fibre of $\frak{X}$, considered as a Berkovich space, or in other words the analytification of the base change of $X$ to $\C_p$.

Let $\pi_1^\et(X^\mathrm{an}_{\C_p})$ denote the \'etale fundamental group of $X_{\C_p}^\mathrm{an}$ in the sense of \cite{dJ95a}, and by $\pi_1^\mathrm{top}(X^\mathrm{an}_{\C_p})$ the fundamental group of the underlying topological space of $X_{\C_p}^\mathrm{an}$. Theorem 2.10(iii) of \cite{dJ95a} together with rigid analytic GAGA shows that the profinite completion of $\pi_1^\et(X^\mathrm{an}_{\C_p})$ is trivial, since it is isomorphic to the algebraic \'etale fundamental group $\pi_1^\et(X_{\C_p})$ of $X_{\C_p}$, and $X$ is Calabi--Yau. Next, by Remark 2.11 of \cite{dJ95a} together with Theorem 9.1 of \cite{Ber99} we have a surjection $\pi_1^\et(X^\mathrm{an}_{\C_p})\rightarrow \pi_1^\mathrm{top}(X^\mathrm{an}_{\C_p})$ and hence the profinite completion of $\pi_1^\mathrm{top}(X^\mathrm{an}_{\C_p})$ is trivial. 

Now by Theorem 8.2 of \cite{Ber99} we have $\pi_1(\Gamma)\cong \pi_1^\mathrm{top}(X^\mathrm{an}_{\C_p})$ and hence the profinite completion of $\pi_1(\Gamma)$ is trivial. Since $\Gamma$ is a 3-manifold, we may finally apply \cite{Hem87} to conclude that $\pi_1(\Gamma)$ is trivial as claimed.
\end{proof}

We may now conclude the proof of Theorem \ref{CY3} using the Poincar\'e conjecture. In fact, if we know that the weight monodromy conjecture holds, then we have the following converse.

\begin{proposition} Let $\mathscr{X}$ be a strictly semistable $R$-scheme whose generic fibre is a Calabi--Yau threefold $X$, such that $\omega_\mathscr{X}\cong \mathcal{O}_{\mathscr{X}}$. Assume that the special fibre $Y$ is a combinatorial Calabi--Yau  of Type IV. If the weight monodromy conjecture holds for $H^3(X)$, then $N^3\neq 0$.
\end{proposition}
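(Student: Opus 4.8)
The plan is to run the weight spectral sequence in reverse: compute the lowest-weight graded piece of $H^3(X)$ directly from the dual complex $\Gamma$, and then feed the hypothesis (weight monodromy) in to convert nonvanishing of that piece into $N^3\neq 0$. Throughout I would, as in the proof of Theorem \ref{CY3}, treat only the case $\mathrm{char}(F)=0$, $\ell\neq p$, the others being identical.

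First I would set up, exactly as in the proof of Theorem \ref{k3main}, the weight spectral sequence
\[ E_1^{s,t} = \bigoplus_{j\geq\max\{0,-s\}} H^{t-2j}_\et\bigl(Y^{(s+2j)}_{\overline{k}},\Q_\ell\bigr)(-j) \Rightarrow H^{s+t}_\et(X_{\overline{F}},\Q_\ell), \]
which, since $\mathscr{X}$ is a scheme, degenerates at $E_2$ and carries a monodromy morphism $N\colon E_r^{s,t}\to E_r^{s+2,t-2}$ abutting to the monodromy operator on cohomology. Because $E_1^{s,t}$ is pure of weight $t$, degeneration at $E_2$ identifies $\mathrm{gr}^W_t H^{s+t}(X)$ with $E_2^{s,t}$; in particular $\mathrm{gr}^W_0 H^3(X) = E_2^{3,0}$ is the lowest weight occurring in $H^3(X)$, and $\mathrm{gr}^W_6 H^3(X)=E_2^{-3,6}$ the highest.

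Next I would identify $E_2^{3,0}$ combinatorially. On the row $t=0$ only the summand $j=0$ survives, so $E_1^{s,0}=H^0(Y^{(s)}_{\overline k},\Q_\ell)$ and the only surviving component of $d_1$ is the alternating sum of restriction maps; hence $(E_1^{\bullet,0},d_1)$ is the simplicial cochain complex of $\Gamma$ with $\Q_\ell$-coefficients and $E_2^{s,0}=H^s_{\mathrm{sing}}(\Gamma,\Q_\ell)$ --- this is the same computation that identifies $E_2^{2,0}$ in the proof of Theorem \ref{k3main}, one degree higher. Since $Y$ is a combinatorial Calabi--Yau of Type IV, $\Gamma$ is a triangulation of $S^3$ (Definition \ref{ccyt4}), so $E_2^{3,0}\cong H^3_{\mathrm{sing}}(S^3,\Q_\ell)\cong \Q_\ell\neq 0$.

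Finally I would invoke the hypothesis: the weight monodromy conjecture for $H^3(X)$ asserts that the monodromy filtration centred at $3$ coincides with $W_\bullet$, whence in particular $N^3\colon \mathrm{gr}^W_6 H^3(X)\isomto \mathrm{gr}^W_0 H^3(X)$ is an isomorphism; as the target $E_2^{3,0}$ is nonzero, $N^3$ is nonzero on $\mathrm{gr}^W_\bullet H^3(X)$, hence $N^3\neq 0$ on $H^3(X)$. The one genuine subtlety --- and exactly the reason the hypothesis cannot be omitted --- is this last step: compatibility of $N$ with the weight filtration alone only gives $N^3\bigl(H^3(X)\bigr)\subseteq W_0$, and without weight monodromy one controls neither the size of $\mathrm{gr}^W_6$ nor whether it maps nontrivially onto $\mathrm{gr}^W_0$. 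Everything else is routine bookkeeping with the weight spectral sequence already used in the surface cases.
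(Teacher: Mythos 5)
Your proposal is correct and follows essentially the same route as the paper: both identify $E_2^{3,0}$ of the weight spectral sequence with $H^3_{\mathrm{sing}}(\Gamma,\Q_\ell)\cong\Q_\ell$ (nonzero since $\Gamma\simeq S^3$) and then use the weight monodromy hypothesis to conclude that $N^3:E_2^{-3,6}\to E_2^{3,0}$ is an isomorphism, hence $N^3\neq 0$. Your extra remarks on $E_2$-degeneration, the $t=0$ row being the simplicial cochain complex of $\Gamma$, and why compatibility of $N$ with weights alone would not suffice are just a more explicit writing-out of the same argument.
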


\begin{proof} Again, we assume that $\ell\neq p$, the other cases are handled similarly. Consider the weight spectral sequence $E_r^{p,q}$ for $\mathscr{X}$. The hypotheses imply that $N^3$ induces an isomorphism
\[ N^3:E_2^{-3,6}  \rightarrow E_2^{3,0} \]
and to show that $N^3\neq 0$ it therefore suffices to show that $E_2^{3,0}\neq 0$. Writing out the weight spectral sequence explicitly we see that we have an isomorphism
\[ E_2^{3,0}\cong H^3_\mathrm{sing}(\Gamma,\Q_\ell) \]
where $\Gamma\simeq S^3$ is the dual graph of $Y$, and hence the claim follows.
\end{proof}

This is in particular the case if $\mathrm{char}(F)=p$ (when $\ell\neq p$ this is \cite{Ito05}, when $\ell=p$ it is Chapter 5 of \cite{LP16}) or $\mathrm{char}(F)=0$, $\ell\neq p$ and $X$ is a complete intersection in some projective space (which follows from \cite{Sch12}).

\section*{Acknowledgements}

B. Chiarellotto was supported by the grant MIUR-PRIN 2010-11 "Arithmetic Algebraic Geometry and Number Theory". C. Lazda was supported by a Marie Curie fellowship of the Istituto Nazionale di Alta Matematica. Both authors would like to thank the anonymous referee for a careful reading of the paper, and for suggesting several important improvements.

\bibliographystyle{mysty}
\bibliography{/Users/Chris/Dropbox/LaTeX/lib.bib}

\end{document}